\definecolor{colorcita}{RGB}{21,86,130}
\definecolor{colorref}{RGB}{5,10,177}
\definecolor{colorweb}{RGB}{177,6,38}
\numberwithin{subsection}{section}
\newtheorem{theorem}{Theorem}[section]
\newtheorem{proposition}[theorem]{Proposition}
\newtheorem{corollary}[theorem]{Corollary}
\newtheorem{lemma}[theorem]{Lemma}
\theoremstyle{definition}
\newtheorem{remark}[theorem]{Remark}
\theoremstyle{remark}
\DeclarePairedDelimiter{\abs}{\lvert}{\rvert}
\DeclareMathOperator{\spa}{span}
\DeclareMathOperator{\id}{\mathrm{id}}
\DeclarePairedDelimiter\floor{\lfloor}{\rfloor}
\newcommand\restrict[1]{\raisebox{-.5ex}{$\vert$}_{#1}}
\begin{document}
\title[Minimal projections onto spaces of polynomials on  real euclidean   spheres]{Minimal projections onto spaces of polynomials\\ on  real euclidean   spheres}

\author[Defant]{A.~Defant}
\address{%
Institut f\"{u}r Mathematik,
Carl von Ossietzky Universit\"at,
26111 Oldenburg,
Germany}
\email{defant$@$mathematik.uni-oldenburg.de}

\author[Galicer]{D.~Galicer}
\address{Departamento de Matem\'{a}ticas y Estad\'{\i}stica, Universidad Torcuato Di Tella, Av. Figueroa Alcorta 7350 (1428), Buenos Aires, Argentina and IMAS-CONICET. \tiny{On leave from Departamento de Matematica, Facultad de Ciencias Exactas y Naturales, Universidad de Buenos Aires, (1428) Buenos Aires, Argentina}}  \email{daniel.galicer@utdt.edu}

\author[Mansilla]{M.~Mansilla}
\address{Departamento de Matem\'{a}tica,
Facultad de Cs. Exactas y Naturales, Universidad de Buenos Aires and IAM-CONICET. Saavedra 15 (C1083ACA) C.A.B.A., Argentina}
\email{mmansilla$@$dm.uba.ar}

\author[Masty{\l}o]{M.~Masty{\l}o}
\address{Faculty of Mathematics and Computer Science, Adam Mickiewicz University, Pozna{\'n}, Uniwersytetu
\linebreak
Pozna{\'n}skiego 4,
61-614 Pozna{\'n}, Poland}
\email{mieczyslaw.mastylo$@$amu.edu.pl}

\author[Muro]{S.~Muro}
\address{FCEIA, Universidad Nacional de Rosario and CIFASIS, CONICET, Ocampo $\&$ Esmeralda, S2000 Rosario, Argentina}
\email{smuro$@$fceia.unr.edu.ar}

\date{}

\thanks{The research of the fourth author was supported by the National Science Centre (NCN), Poland, Project 2019/33/B/ST1/00165. The research of the second and fifth author is additionally supported by 20020220300242BA}

\begin{abstract}
We investigate projection constants within classes of multivariate polynomials over finite-dimensional real Hilbert spaces. Specifically, we consider the projection constant for spaces of spherical harmonics  and spaces of homogeneous polynomials as well as for spaces of polynomials of finite degree on the unit sphere. We  establish a connection between these quantities and certain weighted $L_1$-norms of specific Jacobi polynomials. As a consequence, we present exact formulas, computable expressions and  asymptotically accurate estimates  for them.  
\end{abstract}

\subjclass[2020]{Primary: 33C55, 33C45, 46B06, 46B07. Secondary: 43A75, 46G25}

\keywords{Projection constants, spherical harmonics, homogeneous and finite degree polynomials, Jacobi polynomials.}
\maketitle

\maketitle

\section{Introduction}

Spherical harmonics and spherical polynomials have become indispensable tools across scientific disciplines, providing elegant solutions for problems defined on the unit sphere and beyond. From solving partial differential equations to enhancing numerical integration methods, their versatility meets the demands of diverse applications.

Exploring approximation techniques and numerical methods over spheres reflects the growing interest in efficient computational algorithms. 
From a functional analysis perspective, particularly within the realm of Banach space theory, it is crucial to investigate the projection constant for Banach spaces of spherical harmonics and spherical polynomials.

The concept of the projection constant is foundational in the field of Banach spaces and their local theory. Its origins trace back to the examination of complemented subspaces within Banach spaces.

Consider $X$ as a complemented subspace of a Banach space $Y$. The relative projection constant of $X$ in $Y$ is defined as follows:
\begin{align*}
\boldsymbol{\lambda}(X, Y) & =  \inf\big\{\|P\|: \,\, P\in \mathcal{L}(Y, X),\,\, P|_{X} = \id_X\big\}\\
&  = \inf\big\{c>0: \,\,\text{$\forall\, T \in \mathcal{L}(X, Z)$ \,\, $\exists$\,\, an extension\,
$\widetilde{T}\in \mathcal{L}(Y, Z)$\, with $\|\widetilde{T}\| \leq c\,\|T\|$}\big\}\,,
\end{align*}
where $\id_X$ denotes the identity operator on $X$, and $\mathcal{L}(U,V)$ represents the Banach space of all bounded linear operators between Banach spaces $U$ and $V$ with the operator norm. Here, $\mathcal{L}(U):=\mathcal{L}(U,U)$, and we adopt the convention that $\inf \varnothing = \infty$. This equality between the projection constant and the so-called extension constant can be found, for example, in \cite[III.B.4 Lemma]{wojtaszczyk1996banach}.

The (absolute) projection constant of $X$ is expressed as:
\[
\boldsymbol{\lambda}(X) := \sup \,\,\boldsymbol{\lambda}(I(X),Y)\,,
\]
where the supremum is taken over all Banach spaces $Y$ and isometric embeddings $I\colon X \to Y$. If $X$ is a finite-dimensional Banach space and $X_1$ is a subspace of some $C(K)$-space isometric to $X$, then (see, e.g. \cite[III.B.5 Theorem] {wojtaszczyk1996banach}):
\begin{align}\label{eq: l_infty inyective}
\boldsymbol{\lambda}(X) = \boldsymbol{\lambda}(X_1, C(K))\,.
\end{align}
Thus, determining $\boldsymbol{\lambda}(X)$ is equivalent to finding the norm of a~minimal projection from $C(K)$ onto~$X_1$.

It is worth noting that projections play an important role in approximation theory. In fact, if $Y$ is a~Banach space and $P$ a~projection from $Y$ onto a subspace $X$, then the error $\|y - Py\|_Y$ of approximation of an element $y\in Y$ by $Py$ satisfies
\[
\|y- Py\|_Y \leq \|\id_Y - P\| \, \text{dist}(y, X) \leq (1 + \|P\|)\, \text{dist}(y, X)\,,
\]
where $\text{dist}(y, X) =\inf\{\|y-x\|_Y \,: \, x\in X\}$. This estimate motivates the problem of minimizing $\|P\|$, and any
projection  $P_0\colon Y \to Y$ onto $X$ such that $\|P_0\| = \boldsymbol{\lambda}(X, Y)$, is said to be a minimal projection of $Y$ onto $X$.

In $L^2(\mathbb{S}^{n-1})$, the space of square-integrable functions on the unit sphere $\mathbb{S}^{n-1}$ in the Euclidean space $\mathbb{R}^n$, each function can be decomposed into an orthogonal sum of homogeneous spherical harmonics, serving as fundamental building blocks. Understanding the projection constant in this context aids in evaluating the efficacy of a partial approximation for a given function.

The initial inspiration for this work stems from the seminal contributions of Ryll and Wojtaszczyk \cite{ryll1983homogeneous}. Solving a problem posed by Waigner, they demonstrated that the inclusion $H_\infty(B_{\ell_2^n(\mathbb{C})}) \hookrightarrow H_1(B_{\ell_2^n(\mathbb{C})})$ between Hardy spaces is not compact, where $B_{\ell_2^n(\mathbb{C})}$ denotes the open unit ball of the complex $n$-dimensional Hilbert space $\ell_2^n(\mathbb{C})$ and $n>1$. As noted in \cite{ryll1983homogeneous}, this result is intimately related to a question raised by Rudin in his monograph \cite{rudin1980}: Does there exist an inner function on the open unit ball of the Hilbert space $\ell_2^n(\mathbb{C})$, $n>1$ ?  Recall that a nonconstant bounded holomorphic function with domain $B_{\ell_2^n(\mathbb{C})}$ is called an inner function if its radial limits have modulus equal to $1$ almost everywhere on $\mathbb{S}^{n-1}$, where "almost everywhere" refers to the rotation-invariant probability measure on $\mathbb{S}^{n-1}$. For more details, we refer the reader to \cite{rudin1985} and \cite{wojtaszczyk1996banach}.

Much of this deep cycle of ideas  relies on the following concrete formula for the projection
constant  of the Banach space $\mathcal{P}_{d}(\ell_2^n(\mathbb{C}))$ of all  $d$-homogeneous polynomials on $\ell_2^n(\mathbb{C})$:
\begin{align} \label{fascinating}
\boldsymbol{\lambda}\big(\mathcal{P}_d(\ell_2^n(\mathbb{C})\big) = \frac{\Gamma(n +d) \Gamma(1 + \frac{d}{2})}{\Gamma(1 + d)\Gamma(n + \frac{d}{2})}, \quad\, d,n\in \mathbb{N}\,
\end{align}
(see \cite{ryll1983homogeneous}, and also  \cite[III.B.15]{wojtaszczyk1996banach}
and \cite{rudin1985})). The norm considered in $\mathcal{P}_{d}(\ell_2^n(\mathbb{C}))$ is, as usual, the supremum norm taken with respect to  the complex euclidean ball $B_{\ell_2^n(\mathbb{C})}$.

A~simple calculation yields, 
\begin{align} \label{surprise}
\boldsymbol{\lambda}\big(\mathcal{P}_d(\ell_2^n(\mathbb{C})\big) \leq 2^{n-1}, \quad\, d,n\in \mathbb{N}\,.
\end{align}
The case $n=2$ is of particular interest. Indeed, in the mentioned paper \cite{ryll1983homogeneous} the authors noticed
the~surprising fact that the sequence $\big(X_d\big)_{d\geq 1}$ with $X_d:= \mathcal{P}_{d}(\ell_2^2(\mathbb{C}))$ forms the first nontrivial known example of a sequence of finite-dimensional Banach spaces for which $\lim_{d \to \infty} \dim X_d =\infty$ although 
\[
\sup_d \boldsymbol{\lambda}(X_d) <~\infty\,. 
\]
It seems interesting to note that the formula~\eqref{fascinating}
combined with the well-known limit shown further in Equation~\eqref{mainasym} gives that 
\begin{align} \label{surpriseB}
 \lim_{d\to \infty} \boldsymbol{\lambda}\big(\mathcal{P}_{d}(\ell_2^n(\mathbb{C})\big) =2^{n-1}\,.
\end{align}
Thus, more precisely, we have $\sup_d \boldsymbol{\lambda}(X_d)=2$. It is worth noting here that Bourgain \cite{bourgain1989} gave an affirmative solution to a~problem considered in \cite{ryll1983homogeneous},
showing that the sequence $\big(\text{dist}\big(X_d, \ell_\infty^{\dim X_d}(\mathbb{C})\big)\big)_d$ of Banach Mazur distances is bounded.

What about real Hilbert spaces $\ell_2^n(\mathbb{R})$? By Rutovitz \cite{rutovitz}  we know that 
\begin{align} \label{beginning}
\boldsymbol{\lambda}\big(\ell_2^n(\mathbb{R})\big)  
= \frac{2}{\sqrt{\pi}}   \frac{\Gamma(\frac{n+2}{2})}{\Gamma(\frac{n+1}{2})}\,,
\end{align}
which is different from the complex case $d=1$ in 
\eqref{fascinating}:
\begin{align} \label{Ruto}
\boldsymbol{\lambda}\big(\ell_2^n(\mathbb{C})\big)
= \frac{\sqrt{\pi}}{2}   \frac{n!}{\Gamma(n + \frac{1}{2})}\,.
\end{align}
At first glance, one might think that obtaining the projection constant of $ \ell_2^n(\mathbb{C}) $ is as simple as replacing $ n $ with $ 2n $ in \eqref{beginning}, given that $ \ell_2^n(\mathbb{C}) $ is isometrically isomorphic to $ \ell_2^{2n}(\mathbb{R}) $. In fact,  replacing $n$ with $2n$ does not yield the correct value. 
Note that the projections considered in the definitions are $\mathbb{R}$-linear in one setting and $\mathbb{C}$-linear in the other.

Among others, motivated by the fascinating formula from \eqref{fascinating}, our main goal here is to study the projection constant of $\mathcal{P}_{d}(\ell_2^n(\mathbb{R}))$.

Contrary to initial expectations, the real case unfolds with intricate subtleties, ultimately revealing nuanced differences from its complex counterpart.
To illustrate one of these differences, we note that by~\eqref{surprise} the sequence
  $\big(\boldsymbol{\lambda}\big(\mathcal{P}_{d}(\ell_2^n(\mathbb{C}))\big)\big)_d$  is bounded for each fixed $n$,
whereas we are going to show that in the real case  for each fixed $n\geq 2$ the sequence $\big(\boldsymbol{\lambda}\big(\mathcal{P}_{d}(\ell_2^n(\mathbb{R}))\big)\big)_d$ increases  to infinity.

Our focus in this article lies on finding
 suitable extensions of \eqref{fascinating} for various spaces of multivariate polynomials on finite-dimensional real Hilbert spaces  - more precisely, we intend  to present  concrete
formulas or computable expressions for the projection constants of various classes of polynomials defined on finite-dimensional real Hilbert spaces or their  spheres.
To this end, we show that the projection constant can be expressed through an average integral representation. Building on this observation, we prove that the optimal projection minimizing the norm coincides with the orthogonal projection in $L_2(\mathbb{S}^{n-1})$. This perspective allows us to obtain integral representations for the projection constants under study and use this to derive asymptotically sharp estimates.

In particular, we show that the projection constants of the spaces $\mathcal{P}_{\leq d}(\mathbb{S}^{n-1})$, $\mathcal{P}_{ d}(\mathbb{S}^{n-1})$, and $\mathcal{H}_{d}(\mathbb{S}^{n-1})$ for $n>2$ behave asymptotically as $d^{\frac{n-2}{2}}$ for large $d$, despite their considerable dimensional differences. These spaces represent the Banach spaces of degree-$d$ polynomials, $d$-homogeneous polynomials, and $d$-homogeneous spherical harmonics, respectively, endowed with the supremum norm on the real unit sphere $\mathbb{S}^{n-1}$
of $\ell_2^n(\mathbb{R})$. 
Consequently, we observe that for the first two of these spaces the Kadets-Snobar upper bound is far from being tight. This estimate asserts that the projection constant of any finite-dimensional space is bounded by the square root of its dimension (see, for instance, \cite[Theorem 10, III.B.]{wojtaszczyk1996banach}).
We also analyze the behavior of the ratio of their projection constant and $d^{\frac{n-2}{2}}$ as a function of the dimension $n$ when $d$ is sufficiently large.
The case $n=2$ is also addressed. Specifically, we prove that the projection constant of $\mathcal{P}_{\leq d}(\mathbb{S}^{1})$ and $\mathcal{P}_{d}(\mathbb{S}^{1})$ behaves as $\log d$ as $d\to \infty$, whereas the one for $\mathcal{H}_{d}(\mathbb{S}^{1})$ remains constant.

The present work combines ideas from Banach space theory and approximation theory on the unit sphere---two areas that, while overlapping in some respects, differ significantly in terminology, focus, and methods. To ensure clarity in presenting our results, we provide detailed exposition of key tools and concepts, some of which may be familiar to specialists in one field but not the other. This unified presentation reflects how techniques from both areas come together in our setting and sets the stage for the main results that follow. To conclude this introduction, we provide a selection of recent literature relevant to the study of the projection constant: \cite{basso2019computation, blom2025properties, defant2023asymptotic, defant2023integral, defant2024projection,  deregowska2022value, derȩgowska2023simple, foucart2017maximal, foucart2018determining, HR2022, hokamp2023spaces, krieg2026sampling,  lewickimastylo, lewicki2021chalmers, lewicki2022codimension,  naor2021extension}. This compilation is not intended to be exhaustive but rather to serve as a starting point for exploring recent developments and the ongoing interest in this area.

We emphasize from the outset that the spaces of polynomials considered in this work do not include the classical spaces of algebraic polynomials on $[-1,1]$, whose projection constants do not appear to admit an analytic expression, as evidenced by the pivotal work of \cite{chalmers1990determination}.

\section{Preliminaries} We recommend consulting Wojtaszczyk's book  \cite{wojtaszczyk1996banach} for matters related to Banach space theory and Atkinson and Han's monograph \cite{atkinson2012spherical} for topics concerning spherical harmonics and approximation theory.

We denote the Lebesgue measure on $\mathbb{R}^n : = \ell_2^n(\mathbb{R})$ by $\lambda_n$, and the surface (Borel) measure on the real unit sphere $\mathbb{S}^{n-1}$ in $\ell_2^n(\mathbb{R})$ by $s_n$. Recall one of the simplest definitions of $s_n$:  If $A$ is a Borel set in $\mathbb{S}^{n-1}$, then 
\[
s_n(A) := n\,\lambda_n(\{rx: \, r\in (0, 1),\, \, x\in A\})\,.
\]  
Since Lebesgue measure of the euclidean unit ball is $\frac{\pi^{n/2}}{\Gamma( 1 + n/2 )}$ (see e.g. \cite[Chapter 1, eq.(1.17)]{pisier1999volume}), it follows that 
\begin{equation} \label{omega}
\pmb{\omega}_n :=s_n(\mathbb{S}^{n-1}) = \frac{2 \pi^{n/2}}{\Gamma(n/2)}\,.
\end{equation}
Throughout the paper, the measure $\frac{1}{\pmb{\omega}_n} s_n$ is called the normalized surface measure 
on $\mathbb{S}^{n-1}$ and is denoted, as usual, by $\sigma_n$.

The orthogonal group acting on  $\mathbb{S}^{n-1}$ is denoted by~$\mathcal{O}_n$. For a compact Hausdorff space $K$, we denote by $C(K)$ the Banach space of all complex-valued continuous functions defined on $K$ endowed with the supremum norm. 

Recall that the normalized surface measure $\sigma_n$ is invariant under orthogonal transformations. Thus, it can also be obtained by the useful formula 
\begin{equation}\label{greatrudy}
\int_{\mathbb{S}^{n-1}} f(\xi)\,d\sigma_n(\xi) = \int_{\mathcal{O}_n} f(A\xi_0)\,d\textbf{m}(A), \quad\, f\in C(\mathbb{S}^{n-1})\,,
\end{equation}
where $\textbf{m}$ is the normalized Haar measure on $\mathcal{O}_n$ and $\xi_0$  some fixed vector in $\mathbb{S}^{n-1}$.

We will use throughout the following notation: given two double sequences of positive numbers $(a_{n,d})_{n,d \in \mathbb{N}}$ and $(b_{n,d})_{n,d \in \mathbb{N}}$, we write
\[ 
a_{n,d} \sim_{c(n)} b_{n,d}\,, 
\]
if there exist constants $c_1(n), c_2(n) > 0$ that depend solely on $n$ (and not on $d$) such that
\[ c_1(n) a_{n,d} \leq b_{n,d} \leq c_2(n) a_{n,d} \]
for all $n, d \in \mathbb{N}$. The notation $a_{n,d} \sim_{c(d)} b_{n,d}$ is defined analogously.

In what follows, for a given sequence $(X_k)_{k=1}^{\infty}$ of linear subspaces of a linear space $X$, we define, for brevity, the notation
\[
\text{span}_k X_k := \text{span} \, \bigg( \bigcup_{k=1}^\infty X_k \bigg)\,.
\]
In the case when $X$ is a Banach space, the closure of $\text{span}_k X_k$ is denoted by $\overline{\text{span}}_{k} X_k$.

\subsection{Polynomials and spherical harmonics}
We write  $\mathcal P (\mathbb{R}^n)$ for the linear space of all  polynomials $f\colon \mathbb{R}^n \to \mathbb{C}$ of the form
 \begin{align} \label{repr}
f\left(x\right)=\sum_{\alpha \in J} c_\alpha\,x^\alpha, \quad\, x\in \mathbb{R}^n\,,
\end{align}
where  $J \subset  \mathbb{N}_0^n$ is finite, and $(c_\alpha)_{\alpha\in J}$ are  complex coefficients.  
Since $c_\alpha=\partial^\alpha f(0)/\alpha!$ for $\alpha \in J$, it is immediate that the preceding representation  is unique.
For $f \in \mathcal P (\mathbb{R}^n)$ we call 
\[
\text{deg}(f)
:= \max_{\alpha \in J} |\alpha|
\]
the degree of $f$,
where as usual $|\alpha|:=\sum \alpha_i$
for $\alpha \in \mathbb{N}^n_0$.

Given  $d \in \mathbb{N}_0$, we write
$\mathcal P_d (\mathbb{R}^n)$  for all $d$-homogeneous polynomials in $\mathcal P (\mathbb{R}^n)$
(that is, $f$ has a~representation like in \eqref{repr} with coefficients $ c_\alpha \neq 0$ only if $|\alpha|= d $),
and $\mathcal P_{\leq d}(\mathbb{R}^n)$ for all polynomials of 
degree at most $d$. Due to the linear independence of monomials $x^\alpha$, any degree-$d$ polynomial $f \in \mathcal P_{\leq d} (\mathbb{R}^n)$ may be uniquely represented as a~sum $f= \sum_{k=0}^d f_k$ of $k$-homogeneous polynomials $f_k$.
 Clearly,
\begin{equation} \label{klm1}
 \mathcal P_{\leq d}(\mathbb{R}^n)
=
  \spa_{k \leq d} \mathcal P_{k}(\mathbb{R}^n)
\quad \text{and} \quad
 \mathcal P(\mathbb{R}^n)
=
  \spa_k \mathcal P_k(\mathbb{R}^n)\,.
\end{equation}
Recall that
\begin{equation}\label{dimformuA}
    \dim \mathcal{P}_d(\mathbb{R}^n)  
    =  \binom{n+d-1}{n-1}=  \binom{n+d-1}{d}\,. 
  \end{equation}

The  linear spaces $\mathcal{P}_{d} (\mathbb{R}^n)$ and $\mathcal{P}_{\leq d}(\mathbb{R}^n)$ equipped  with the supremum norm over the real unit euclidean ball $B_{\ell_2^n(\mathbb{R})}$ form complex Banach spaces, which we  denote  by  
$\mathcal{P}_{d}(\ell_2^n(\mathbb{R}))$
and $\mathcal{P}_{\leq d}(\ell_2^n(\mathbb{R}))$ respectively. 

We denote by $\mathcal{P}(\mathbb{S}^{n-1})$ the linear space consisting of all restrictions $f|_{\mathbb{S}^{n-1}}$ of polynomials $f \in \mathcal{P}(\mathbb{R}^n)$. When only homogeneous polynomials of degree $d$ (or polynomials of degree at most $d$) are considered, we write $\mathcal{P}_d(\mathbb{S}^{n-1})$ or $\mathcal{P}_{\le d}(\mathbb{S}^{n-1})$, respectively.  Together with the supremum norm taken on $\mathbb{S}^{n-1}$,
both spaces $\mathcal{P}_d(\mathbb{S}^{n-1})$ or $\mathcal{P}_{\leq d}(\mathbb{S}^{n-1})$
  form  finite-dimensional subspaces of $C(\mathbb{S}^{n-1})$. 
Clearly,  by~\eqref{klm1}
\begin{equation}\label{ludoABAB}
\mathcal P_{\leq d} (\mathbb{S}^{n-1})
=
  \spa_{k \leq d} \mathcal P_{k}(\mathbb{S}^{n-1})
\quad \,\text{and} \,\quad
    \mathcal P ( \mathbb{S}^{n-1})=\spa_{k}\mathcal{P}_k (\mathbb{S}^{n-1})\,.
\end{equation}

Note that  the restriction map
\begin{equation}\label{aprilo}
\mathcal{P}_{d}(\ell_2^n(\mathbb{R}))  \to \mathcal{P}_{d}(\mathbb{S}^{n-1})\,, \quad\,
f \mapsto f_{|\mathbb{S}^{n-1}}
\end{equation}
is an isometric bijection; indeed, by homogeneity for each $f \in \mathcal{P}_{d}(\ell_2^n(\mathbb{R}))$
\[
\sup_{x\in B_{\ell_2^n(\mathbb{R})}} |f(x)| = \sup_{x \in \mathbb{S}^{n-1}} |f(x)|\,.
\]
On the other hand,
since $\sum_{k=1}^n x_k^2=1$ on $\mathbb{S}^{n-1}$, the surjective restriction map from $\mathcal{P}_{\leq d}(\ell_2^n(\mathbb{R}))$
to $\mathcal{P}_{\leq d}(\mathbb{S}^{n-1})$
 is  non-injective for $d \geq 2$, implying 
that $ \mathcal{P}_{\leq d}(\ell_2^n(\mathbb{R}))
\,\neq\,\mathcal{P}_{\leq d}(\mathbb{S}^{n-1})
\,.$

A polynomial $f \in  \mathcal{P}(\mathbb{R}^n)$ is said to be harmonic, whenever $\Delta f=0$, where as usual
\[
\Delta : = \sum_{j=1}^n \frac{\partial^2}{\partial x_{j}^2} \colon
\,\, \mathcal P (\mathbb{R}^n) \to \mathcal P (\mathbb{R}^n)
\]
denotes  the Laplace operator. We  write $\mathcal{H}(\mathbb{R}^n)$ for the subspace of all harmonic polynomials in
$\mathcal{P}(\mathbb{R}^n)$, and $\mathcal{H}_k (\mathbb{R}^n)$ for the subspace of all $k$-homogeneous harmonic polynomials.
Similarly,  we define $\mathcal H_{\leq d}(\mathbb{R}^n)$.
Both $\mathcal{H}(\mathbb{R}^n)$ and $\mathcal{H}_d(\mathbb{R}^n)$ are $\mathcal{O}_n$-invariant, that is, $f \circ O$ belongs to the same space as $f$ for every $O \in \mathcal{O}_n$.

Observe that  
\begin{equation}\label{ludo1} \mathcal H_{\leq d}(\mathbb{R}^n) = \spa_{k \leq d} \mathcal H_{k}(\mathbb{R}^n) \quad \text{and} \quad
\mathcal H(\mathbb{R}^n) = \spa_k \mathcal H_k(\mathbb{R}^n)\,.
\end{equation}
In fact, if $f\in \mathcal{H} (\mathbb{R}^n)$ has degree $d$, then $f= \sum_{k=0}^d f_k$ for some $f_k \in \mathcal{P}_k(\mathbb{R}^n)$. But since $\triangle f_k$ for $k\geq 2$ is $(k-2)$-homogeneous (resp., $0$-homogeneous whenever for $k<2$), it follows by the unique homogeneous decomposition  $\triangle f = \sum_{k=0}^d\triangle f_k =0$  that $\triangle f_k =0$ for each $0 \leq k \leq d$.

Much of what follows is based on the following well-known decomposition of $\mathcal{P}_{d}(\mathbb{R}^n)$ into
harmonic subspaces (see, e.g.\, \cite[Theorem~2.18]{atkinson2012spherical}).

\begin{lemma} \label{harmdim}
For each $d\geq 2$ and  $n\geq 1$ we have the following orthogonal sum{\rm:}
\begin{equation*}
\mathcal{P}_d(\mathbb{R}^n) =\mathcal{H} _d(\mathbb{R}^n) \oplus \|\cdot\|_2^2\,\, \mathcal{P}_{d-2}(\mathbb{R}^n)\,,
\end{equation*}
that is, every $f \in \mathcal{P}_d(\mathbb{R}^n)$
 has a unique decomposition 
\[
f(x ) = g(x) + \|x\|_2^2 \,h(x)\,,\quad x \in \mathbb{R}^n
\]
 with $g \in \mathcal{H}_d(\mathbb{R}^n)$
 and $h \in \mathcal{P}_{d-2}(\mathbb{R}^n)$\,.
\end{lemma}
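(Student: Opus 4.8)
The plan is to realize the two summands as, respectively, the image of a single linear operator and the kernel of its adjoint, with respect to a suitable inner product on $\mathcal{P}_d(\mathbb{R}^n)$; the decomposition then reduces to the standard orthogonal splitting $W = \operatorname{im}(T)\oplus \ker(T^*)$ valid for any linear map $T$ between finite-dimensional inner product spaces. Concretely, I would equip each $\mathcal{P}_d(\mathbb{R}^n)$ with the Fischer (apolar) inner product, defined on the monomial basis by $\langle x^\alpha, x^\beta\rangle = \alpha!\,\delta_{\alpha\beta}$ and extended sesquilinearly to complex coefficients. Introduce the multiplication operator $M\colon \mathcal{P}_{d-2}(\mathbb{R}^n)\to \mathcal{P}_d(\mathbb{R}^n)$, $Mh = \|\cdot\|_2^2\,h$, whose image is by definition $\|\cdot\|_2^2\,\mathcal{P}_{d-2}(\mathbb{R}^n)$, and the Laplacian $\Delta\colon \mathcal{P}_d(\mathbb{R}^n)\to \mathcal{P}_{d-2}(\mathbb{R}^n)$, whose kernel is by definition $\mathcal{H}_d(\mathbb{R}^n)$.

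The key step is to verify that $\Delta$ is the adjoint of $M$ for the Fischer inner product, i.e. $\langle Mh, f\rangle = \langle h, \Delta f\rangle$ for all $h\in \mathcal{P}_{d-2}(\mathbb{R}^n)$ and $f\in \mathcal{P}_d(\mathbb{R}^n)$. Since both operators have real matrix entries in the monomial basis, it suffices to check this on monomials. Writing $\|x\|_2^2 = \sum_k x_k^2$, one has $Mx^\gamma = \sum_k x^{\gamma+2e_k}$ and $\Delta x^\alpha = \sum_k \alpha_k(\alpha_k-1)\,x^{\alpha-2e_k}$, and a short computation shows that both $\langle Mx^\gamma, x^\alpha\rangle$ and $\langle x^\gamma, \Delta x^\alpha\rangle$ vanish unless $\alpha = \gamma + 2e_k$ for some $k$, in which case both equal $(\gamma_k+2)!\prod_{j\neq k}\gamma_j!$. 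This is the only place where genuine calculation is needed, and I expect the factorial bookkeeping (matching $\alpha!$ against $\alpha_k(\alpha_k-1)\,\gamma!$) to be the main, if minor, obstacle.

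With the adjointness in hand, the general identity $\mathcal{P}_d(\mathbb{R}^n) = \operatorname{im}(M)\oplus \ker(M^*)$ for the Fischer inner product yields exactly
\[
\mathcal{P}_d(\mathbb{R}^n) = \|\cdot\|_2^2\,\mathcal{P}_{d-2}(\mathbb{R}^n)\ \oplus\ \mathcal{H}_d(\mathbb{R}^n),
\]
with the two subspaces mutually orthogonal. This single step simultaneously delivers orthogonality, the directness of the sum (the intersection is $\{0\}$), and the fact that the two pieces span all of $\mathcal{P}_d(\mathbb{R}^n)$; in particular, the uniqueness of the decomposition $f = g + \|\cdot\|_2^2\,h$ is immediate from the triviality of the intersection. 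No separate dimension count is required, which is convenient since the dimension formula for $\mathcal{H}_d(\mathbb{R}^n)$ is itself usually derived from precisely this splitting.
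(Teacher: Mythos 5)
Your proposal is correct, and it is a complete, self-contained argument: the Fischer inner product $\langle x^\alpha, x^\beta\rangle = \alpha!\,\delta_{\alpha\beta}$ is positive definite, your computation verifying $\langle Mx^\gamma, x^\alpha\rangle = \langle x^\gamma, \Delta x^\alpha\rangle$ (both vanishing unless $\alpha=\gamma+2e_k$, in which case both equal $(\gamma_k+2)!\prod_{j\neq k}\gamma_j!$) is right, and the splitting $W=\operatorname{im}(T)\oplus\ker(T^*)$ for maps between finite-dimensional inner product spaces then gives the decomposition, its directness, and uniqueness in one stroke. Note, however, that the paper does not prove this lemma at all: it is quoted from Atkinson--Han \cite[Theorem~2.18]{atkinson2012spherical}, so your argument is not a variant of the paper's proof but a replacement for an external citation. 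The adjointness argument you give is the classical one (it is essentially the proof in Stein--Weiss and in Axler--Bourdon--Ramey), and it has the advantage you point out: no dimension count is needed, and indeed the dimension formula \eqref{dimfor} used later in the paper follows from this splitting rather than feeding into it. One small caveat on terminology: the orthogonality you obtain is with respect to the Fischer inner product, whereas the orthogonality the paper ultimately cares about (Proposition~\ref{orthodeco}) is in $L_2(\mathbb{S}^{n-1})$ after restriction to the sphere; the paper derives that separately via Green's identity, and your proof does not supply it. Since the operative content of Lemma~\ref{harmdim} --- spelled out in its ``that is'' clause and the only part invoked downstream --- is the existence and uniqueness of the algebraic decomposition $f = g + \|\cdot\|_2^2\,h$, your proof fully establishes what is needed.
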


All restrictions of polynomials  in $\mathcal{H}(\mathbb{R}^n)$ to $\mathbb{S}^{n-1}$ are denoted by
$
\mathcal{H}(\mathbb{S}^{n-1})\,,
$
and those are called spherical harmonics on $\mathbb{S}^{n-1}$.

We write   $\mathcal{H}_d(\mathbb{S}^{n-1})$ for the
space collecting all  restrictions of polynomials from  $\mathcal{H}_d(\mathbb{R}^n)$ to $\mathbb{S}^{n-1}$,
and define $\mathcal{H}_{\leq d}(\mathbb{S}^{n-1})$ similarly.
 Endowed with the supremum norm taken
on $\mathbb{S}^{n-1}$, both spaces 
form finite-dimensional 
$\mathcal{O}_n$-invariant subspaces of the Banach space $C(\mathbb{S}^{n-1})$.

Iterating Lemma~\ref{harmdim}, leads to the following well-known decomposition theorem of spherical harmonics.

\begin{proposition} \label{orthodeco}
We have
\begin{equation}\label{LH9}
\mathcal H_{\leq d} (\mathbb{S}^{n-1})= \bigoplus_{k \leq d}\mathcal{H}_k ( \mathbb{S}^{n-1})
\quad \text{and}  \quad
  \mathcal H ( \mathbb{S}^{n-1})= \bigoplus_k \mathcal{H}_k ( \mathbb{S}^{n-1})\,,
\end{equation}
and
\begin{equation}\label{klm2}
\mathcal P_d(\mathbb{S}^{n-1})
=  \bigoplus_{j=0}^{\lfloor d/2\rfloor} \,\mathcal{H}_{d-2j}(\mathbb{S}^{n-1})\,,
  \end{equation}
  where all  decompositions are orthogonal in $L_2( \mathbb{S}^{n-1})$. In particular,
  \begin{equation}\label{ludoAA}
  \mathcal H_{\leq d} ( \mathbb{S}^{n-1})=\mathcal P_{\leq d} ( \mathbb{S}^{n-1})
    \quad \text{and} \quad
      \mathcal H (\mathbb{S}^{n-1})=\mathcal P ( \mathbb{S}^{n-1})\,.
\end{equation}
\end{proposition}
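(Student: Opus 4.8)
The plan is to derive all three displays from the single algebraic splitting in Lemma~\ref{harmdim}, supplemented by one genuinely analytic fact: spherical harmonics of distinct degrees are orthogonal in $L_2(\mathbb{S}^{n-1})$.

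First I would iterate Lemma~\ref{harmdim}, applying it successively to the trailing factors $\mathcal{P}_{d-2}(\mathbb{R}^n)$, $\mathcal{P}_{d-4}(\mathbb{R}^n)$, $\ldots$, to obtain the algebraic direct-sum decomposition
\[
\mathcal{P}_d(\mathbb{R}^n) = \bigoplus_{j=0}^{\lfloor d/2\rfloor} \|\cdot\|_2^{2j}\,\mathcal{H}_{d-2j}(\mathbb{R}^n)\,.
\]
Restricting to $\mathbb{S}^{n-1}$, where $\|x\|_2^2 = 1$, each prefactor $\|\cdot\|_2^{2j}$ becomes the constant function $1$, so as linear spaces
\[
\mathcal{P}_d(\mathbb{S}^{n-1}) = \sum_{j=0}^{\lfloor d/2\rfloor} \mathcal{H}_{d-2j}(\mathbb{S}^{n-1})\,.
\]

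The key step, which I expect to be the only part beyond formal bookkeeping, is to show that $\mathcal{H}_k(\mathbb{S}^{n-1}) \perp \mathcal{H}_l(\mathbb{S}^{n-1})$ in $L_2(\mathbb{S}^{n-1})$ for $k \neq l$. For this I would pick $Y_k \in \mathcal{H}_k(\mathbb{R}^n)$ and $Y_l \in \mathcal{H}_l(\mathbb{R}^n)$ and apply Green's second identity on the unit ball. Since $\Delta Y_k = \Delta Y_l = 0$, the volume term vanishes; on $\mathbb{S}^{n-1}$ the outward normal derivative of a $k$-homogeneous function equals $k$ times the function, by Euler's relation $\sum_{i} x_i \partial_i Y_k = k Y_k$. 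Hence $(k-l)\int_{\mathbb{S}^{n-1}} Y_k \overline{Y_l}\, d\sigma_n = 0$ (applying the argument to $\overline{Y_l}$, which is again $l$-homogeneous harmonic), and $k \neq l$ forces the integral to vanish.

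With orthogonality established, the remaining claims are immediate. Nonzero pairwise-orthogonal subspaces are linearly independent, so the sum above is in fact an orthogonal direct sum, which is \eqref{klm2}. For \eqref{LH9}, restricting the identity $\mathcal{H}_{\leq d}(\mathbb{R}^n) = \spa_{k \leq d}\mathcal{H}_k(\mathbb{R}^n)$ from \eqref{ludo1} gives $\mathcal{H}_{\leq d}(\mathbb{S}^{n-1}) = \spa_{k \leq d}\mathcal{H}_k(\mathbb{S}^{n-1})$, which orthogonality upgrades to an orthogonal direct sum; passing to the union over $d$ yields the infinite version. Finally, for \eqref{ludoAA}, the trivial inclusion $\mathcal{H}_k(\mathbb{R}^n) \subseteq \mathcal{P}_k(\mathbb{R}^n)$ gives $\mathcal{H}_{\leq d}(\mathbb{S}^{n-1}) \subseteq \mathcal{P}_{\leq d}(\mathbb{S}^{n-1})$, while \eqref{klm2} shows $\mathcal{P}_k(\mathbb{S}^{n-1}) \subseteq \mathcal{H}_{\leq k}(\mathbb{S}^{n-1})$ for each $k$; combining with the span description \eqref{ludoABAB} yields the reverse inclusion and hence equality.
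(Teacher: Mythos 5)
Your proposal is correct and follows essentially the same route as the paper: iterate Lemma~\ref{harmdim}, use $\|x\|_2^2=1$ on the sphere to get \eqref{klm2} algebraically, deduce \eqref{LH9} from \eqref{ludo1} and \eqref{ludoAA} from \eqref{klm2} together with \eqref{ludoABAB}, with orthogonality of distinct-degree harmonics supplied by Green's identity. The only difference is that the paper cites this orthogonality argument to \cite[Corollary~2.15]{atkinson2012spherical}, whereas you write it out in full (correctly, via Euler's relation for the normal derivative on $\mathbb{S}^{n-1}$).
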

\begin{proof}
 Looking at~\eqref{ludo1}, we immediately see~\eqref{LH9}. 
Moreover,  since $ \|x\|_2^2=1$
for every  $x \in \mathbb{S}^{n-1} $, we deduce from Lemma~\ref{harmdim} that~\eqref{klm2} holds algebraically.
Together with \eqref{ludoABAB} this implies  \eqref{ludoAA}.
For the simple argument (based on Green's identity) showing that  $\mathcal{H}_{k}(\mathbb{S}^{n-1})$ and $\mathcal{H}_{\ell}(\mathbb{S}^{n-1})$ are  
orthogonal in $L_2( \mathbb{S}^{n-1})$
see, e.g. \cite[Corollary~2.15]{atkinson2012spherical}.
\end{proof}

As an immediate consequence of Lemma~\ref{harmdim} 
and~\eqref{dimformuA}, we see that for all    $n \geq 2$ and $d \geq 1$
\begin{equation} \label{dimfor}
N_{n,d} := \dim \mathcal{H}_d(\mathbb{S}^{n-1})
=  \frac{(n+ 2d-2)(n+d-3)!}{d!(n-2)!}\,.
\end{equation}

\smallskip

The following density result is well-known and fundamental for our purposes.

\smallskip

\begin{theorem} \label{realdense}
$\mathcal H ( \mathbb{S}^{n-1})=\spa_{k}\mathcal{H}_k ( \mathbb{S}^{n-1})$ is dense in $C(\mathbb{S}^{n-1})$.
\end{theorem}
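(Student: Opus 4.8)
The plan is to prove density of the span of spherical harmonics by appealing to the Stone--Weierstrass theorem. Recall that by~\eqref{ludoAA} in Proposition~\ref{orthodeco} we have the algebraic identity $\mathcal H(\mathbb{S}^{n-1}) = \mathcal P(\mathbb{S}^{n-1})$, the space of all restrictions to the sphere of polynomials in $\mathcal P(\mathbb{R}^n)$. Thus it suffices to show that $\mathcal P(\mathbb{S}^{n-1})$ is dense in $C(\mathbb{S}^{n-1})$, and for this the span of all real polynomial restrictions, together with its complexification, is a natural candidate for a Stone--Weierstrass argument.

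First I would verify the hypotheses of Stone--Weierstrass for the subalgebra $\mathcal A := \mathcal P(\mathbb{S}^{n-1}) \subset C(\mathbb{S}^{n-1})$. The set $\mathcal A$ is clearly a subalgebra: it is a linear subspace by~\eqref{ludoABAB}, and it is closed under multiplication because the product of two restrictions of polynomials is the restriction of the product polynomial, which again lies in $\mathcal P(\mathbb{R}^n)$. It contains the constant function $1$ (the restriction of the constant polynomial). It separates points of $\mathbb{S}^{n-1}$, since for any two distinct points $\xi \neq \eta$ on the sphere there is a coordinate $x_j$ with $\xi_j \neq \eta_j$, and the linear monomial $x_j$ restricts to a function separating $\xi$ and $\eta$. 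Finally, because $C(\mathbb{S}^{n-1})$ here consists of complex-valued functions, I must check that $\mathcal A$ is closed under complex conjugation; this holds because the conjugate of a polynomial with complex coefficients as in~\eqref{repr} is again such a polynomial (conjugate the coefficients, noting the variables $x_j$ are real on the sphere), so $\overline{f|_{\mathbb{S}^{n-1}}} \in \mathcal A$ whenever $f|_{\mathbb{S}^{n-1}} \in \mathcal A$.

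With these four properties in hand---subalgebra, contains constants, separates points, closed under conjugation---the complex Stone--Weierstrass theorem yields that $\mathcal A = \mathcal P(\mathbb{S}^{n-1})$ is dense in $C(\mathbb{S}^{n-1})$ in the supremum norm. Combining this with the identity $\mathcal H(\mathbb{S}^{n-1}) = \mathcal P(\mathbb{S}^{n-1})$ from~\eqref{ludoAA} completes the proof.

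I do not anticipate a serious obstacle here: the result is a clean application of Stone--Weierstrass, and the only point requiring a moment of care is the self-adjointness (conjugation-closedness) condition, which is genuinely needed in the complex-valued setting and is the step most easily overlooked. The essential content has already been front-loaded into Proposition~\ref{orthodeco}, where the nontrivial algebraic fact $\mathcal H(\mathbb{S}^{n-1}) = \mathcal P(\mathbb{S}^{n-1})$---coming from the harmonic decomposition in Lemma~\ref{harmdim} and the relation $\|x\|_2^2 = 1$ on the sphere---is established; once that is available, density reduces to the textbook statement for polynomials.
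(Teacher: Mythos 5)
Your proposal is correct and follows exactly the route the paper takes: apply Stone--Weierstrass to show $\mathcal P(\mathbb{S}^{n-1})$ is dense in $C(\mathbb{S}^{n-1})$, then conclude via the identity $\mathcal H(\mathbb{S}^{n-1})=\mathcal P(\mathbb{S}^{n-1})$ from~\eqref{ludoAA}. The paper only sketches this argument, so your careful verification of the subalgebra, point-separation, and conjugation-closedness hypotheses (the last indeed being the one point needing care in the complex-valued setting) is a faithful filling-in of the same proof.
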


A standard proof proceeds by first applying the Stone-Weierstrass theorem to show that $\mathcal P(\mathbb{S}^{n-1})$ is dense in $C(\mathbb{S}^{n-1})$. The conclusion then follows from~\eqref{ludoAA} and~\eqref{LH9}.

It is imperative to emphasize that all elements within the aforementioned function spaces have values in the complex plane $\mathbb{C}$. In particular, all function spaces considered form complex Banach spaces. For a treatment on real-valued function, we refer to Section \ref{final remark}.

\subsection{Reproducing kernels} \label{la Rudin}

For every closed subspace $S$ of $L_2(\sigma_n)$, we denote by $\pi_S$ the orthogonal projection from $L_2(\sigma_n)$ onto $S$. 
It follows directly that the orthogonal projection $\pi_S$ commutes with the action of $\mathcal{O}_n$ on $C(\mathbb{S}^{n-1})$. Specifically,
\begin{equation}\label{lemma: pi_S commutes with L_V} \pi_S(f\circ A) = \pi_S(f) \circ A, \end{equation}
for every $f \in C(\mathbb{S}^{n-1})$ and $A \in \mathcal{O}_n$. We call this property $\mathcal O_n$-equivariance.
\smallskip

\begin{lemma} \label{lemm: reproducing}
 For every finite-dimensional subspace $S \subset C(\mathbb{S}^{n-1})$, there exists a unique continuous function 
\[
\mathbf{k}_S : \mathbb{S}^{n-1} \times \mathbb{S}^{n-1} \to \mathbb{C},
\]
the reproducing kernel of $S$, satisfying the following properties:

\begin{itemize}
    \item[(i)] For all \( f \in L_2(\sigma_n) \) and \( x \in \mathbb{S}^{n-1} \),
    \[
    (\pi_S f)(x) = \big\langle f, \mathbf{k}_S(x, \cdot) \big\rangle_{L_2(\sigma_n)}.
    \]
    \item[(ii)] For every \( x \in \mathbb{S}^{n-1} \), the function \( \mathbf{k}_S(x, \cdot) \) belongs to \( S \).
    \item[(iii)] The function \( \mathbf{k}_S \) is Hermitian, meaning that
    \[
    \mathbf{k}_S(x, y) = \overline{\mathbf{k}_S(y, x)}, \quad\, x,y \in \mathbb{S}^{n-1}\,.
    \]  
\end{itemize}

Moreover, if \( S \) is invariant under the action of \( \mathcal{O}_n \), then the following additional properties hold:

\begin{itemize}
    \item[(iv)] The function \( \mathbf{k}_S \) is rotation-invariant, that is,
    \[
    \mathbf{k}_S(Ax, Ay) = \mathbf{k}_S(x, y), \quad\, A \in \mathcal{O}_{n}, \, \, \, \,  x, y \in \mathbb{S}^{n-1}\,.
    \]
    \item[(v)] The diagonal values of $\mathbf{k}_S$ satisfy
    \[
    \mathbf{k}_S(x, x) = \dim S, \quad\, x \in \mathbb{S}^{n-1}\,.
    \]
    \item[(vi)] Additionally, the following integral identity  hold:
    \[
    \left( \int_{\mathbb{S}^{n-1}} |\mathbf{k}_S(x, y)|^2 \, d\sigma_n(y) \right)^{\frac{1}{2}} = \sqrt{\dim S}, \quad\, x \in \mathbb{S}^{n-1}\,.
    \]
\end{itemize}
\end{lemma}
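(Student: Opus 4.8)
The plan is to construct $\mathbf{k}_S$ explicitly from an orthonormal basis of $S$ and then verify each property in turn, treating the $\mathcal{O}_n$-invariant case at the end via the equivariance relation~\eqref{lemma: pi_S commutes with L_V}. First I would fix an orthonormal basis $(e_1,\dots,e_N)$ of $S$ with respect to the inner product of $L_2(\sigma_n)$, where $N=\dim S$, and define
\[
\mathbf{k}_S(x,y) := \sum_{j=1}^N e_j(x)\,\overline{e_j(y)}\,, \qquad x,y \in \mathbb{S}^{n-1}\,.
\]
Since each $e_j$ is continuous (being an element of $S \subset C(\mathbb{S}^{n-1})$), this is a continuous function on $\mathbb{S}^{n-1}\times \mathbb{S}^{n-1}$. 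Property (i) then follows from the formula $\pi_S f = \sum_j \langle f, e_j\rangle_{L_2(\sigma_n)}\, e_j$: writing this out and using conjugate-linearity in the second slot of the inner product gives exactly $(\pi_S f)(x) = \langle f, \mathbf{k}_S(x,\cdot)\rangle_{L_2(\sigma_n)}$. Property (ii) is immediate because $\mathbf{k}_S(x,\cdot) = \sum_j \overline{e_j(x)}\,e_j$ is a finite linear combination of the $e_j$, hence lies in $S$; and property (iii) is visible from the defining sum, since swapping $x,y$ and conjugating interchanges $e_j(x)$ and $\overline{e_j(y)}$.

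For uniqueness, I would argue that property (i) pins down $\mathbf{k}_S(x,\cdot)$ as an element of $L_2(\sigma_n)$ for each fixed $x$: if $\mathbf{k}_S$ and $\mathbf{k}_S'$ both satisfy (i), then $\langle f, \mathbf{k}_S(x,\cdot)-\mathbf{k}_S'(x,\cdot)\rangle = 0$ for all $f \in L_2(\sigma_n)$, forcing the difference to vanish in $L_2$, and continuity upgrades this to pointwise equality. This shows the kernel is independent of the chosen orthonormal basis.

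The $\mathcal{O}_n$-invariant properties come next. For (iv), I would use the equivariance~\eqref{lemma: pi_S commutes with L_V}: one checks that $(x,y)\mapsto \mathbf{k}_S(Ax,Ay)$ is also a reproducing kernel for $S$ (substituting into (i) and exploiting that $\sigma_n$ is rotation-invariant so that $f\mapsto f\circ A$ is a unitary on $L_2(\sigma_n)$ preserving $S$), and then invoke uniqueness to conclude $\mathbf{k}_S(Ax,Ay)=\mathbf{k}_S(x,y)$. Property (v) is a direct computation: $\mathbf{k}_S(x,x)=\sum_j |e_j(x)|^2$, which a priori depends on $x$, but setting $x=y$ in (iv) shows $x\mapsto \mathbf{k}_S(x,x)$ is constant on the orbit $\mathcal{O}_n x_0 = \mathbb{S}^{n-1}$; integrating this constant against $\sigma_n$ and using orthonormality, $\int_{\mathbb{S}^{n-1}} \sum_j |e_j(x)|^2\, d\sigma_n(x) = \sum_j \|e_j\|_{L_2(\sigma_n)}^2 = N$, gives the constant value $N=\dim S$. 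Finally, (vi) follows by expanding
\[
\int_{\mathbb{S}^{n-1}} |\mathbf{k}_S(x,y)|^2\, d\sigma_n(y) = \int_{\mathbb{S}^{n-1}} \mathbf{k}_S(x,y)\,\overline{\mathbf{k}_S(x,y)}\, d\sigma_n(y)\,,
\]
recognizing the right-hand side as $(\pi_S \overline{\mathbf{k}_S(x,\cdot)})(x)$ via (i), which equals $\mathbf{k}_S(x,x)$ since $\mathbf{k}_S(x,\cdot)\in S$ is fixed by $\pi_S$; by (v) this is $\dim S$, and taking square roots yields the claim.

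I expect the main obstacle to be the bookkeeping around conjugation and the order of arguments, since the kernel is Hermitian rather than symmetric (these are complex Banach spaces), so I would be careful to track which slot carries the conjugate throughout, particularly when verifying (vi) where the reproducing property must be applied to $\overline{\mathbf{k}_S(x,\cdot)}$ and its membership in $S$ used. The rotation-invariance of $\sigma_n$, recorded in~\eqref{greatrudy}, is the essential input making the equivariance argument for (iv) and the constancy argument for (v) work.
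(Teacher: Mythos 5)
Your overall architecture---define the kernel from an orthonormal basis, verify (i)--(iii) by direct computation, prove uniqueness from (i), then deduce (iv)--(vi) from uniqueness, rotation invariance of $\sigma_n$, and transitivity of the $\mathcal{O}_n$-action---is exactly the paper's approach (the paper defines $\mathbf{k}_S$ by the basis formula \eqref{ker-def} and leaves everything beyond (i) as routine). However, your defining formula has the conjugation on the wrong slot, and this is not merely notational: with the convention $\langle f,g\rangle_{L_2(\sigma_n)}=\int f\overline{g}\,d\sigma_n$ (conjugate-linear in the second slot, which is the convention you yourself invoke), the kernel satisfying (i) and (ii) is $\mathbf{k}_S(x,y)=\sum_j \overline{e_j(x)}\,e_j(y)$, whereas your definition $\mathbf{k}_S(x,y)=\sum_j e_j(x)\overline{e_j(y)}$ gives
\[
\big\langle f,\mathbf{k}_S(x,\cdot)\big\rangle_{L_2(\sigma_n)}
=\sum_{j}\overline{e_j(x)}\int_{\mathbb{S}^{n-1}} f(y)\,e_j(y)\,d\sigma_n(y)
\neq (\pi_S f)(x)
\]
in general, and makes $\mathbf{k}_S(x,\cdot)$ a combination of the conjugates $\overline{e_j}$, which need not lie in $S$. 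Concretely, for $n=2$ and $S=\mathbb{C}\,g$ with $g(x_1,x_2)=x_1+ix_2$ (so $\|g\|_{L_2(\sigma_2)}=1$), your kernel is $g(x)\overline{g(y)}$; taking $f=g$ in (i) yields $\overline{g(x)}\int g^2\,d\sigma_2=0$ on one side but $(\pi_S g)(x)=g(x)$ on the other, and $\mathbf{k}_S(x,\cdot)=g(x)\,\overline{g}\notin S$. Your own verification of (ii) silently switches to the corrected formula $\sum_j\overline{e_j(x)}\,e_j$, so the write-up contradicts its own definition; the repair is simply to put the bar on the $x$-factor, as in \eqref{ker-def}.

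The same slip recurs in (vi). The correct move is to apply (i) to $f=\mathbf{k}_S(x,\cdot)$ itself, which lies in $S$ by (ii) and is therefore fixed by $\pi_S$:
\[
\int_{\mathbb{S}^{n-1}}|\mathbf{k}_S(x,y)|^2\,d\sigma_n(y)
=\big\langle \mathbf{k}_S(x,\cdot),\mathbf{k}_S(x,\cdot)\big\rangle_{L_2(\sigma_n)}
=\big(\pi_S\mathbf{k}_S(x,\cdot)\big)(x)
=\mathbf{k}_S(x,x)=\dim S\,.
\]
Applying (i) to $\overline{\mathbf{k}_S(x,\cdot)}$, as you propose, produces $\int \overline{\mathbf{k}_S(x,y)}\;\overline{\mathbf{k}_S(x,y)}\,d\sigma_n(y)$, which is not the integral you want; moreover $\overline{\mathbf{k}_S(x,\cdot)}$ is in general not an element of $S$ (same example as above), so the claim that it is "fixed by $\pi_S$" also fails. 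With these two conjugation repairs, the remaining steps---uniqueness from (i) together with continuity and the full support of $\sigma_n$; (iv) by applying uniqueness to $(x,y)\mapsto\mathbf{k}_S(Ax,Ay)$, using the equivariance \eqref{lemma: pi_S commutes with L_V} and the rotation invariance of $\sigma_n$; and (v) by transitivity plus integration against the probability measure $\sigma_n$---are all correct and supply precisely the "routine calculations" that the paper leaves to the reader.
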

To establish these desired properties, we just choose an orthonormal basis \( (f_j)_{j=1}^{\dim S} \) of \( S \) and extend it to an orthonormal basis of \( L_2(\sigma_n) \). So we can explicitly describe the orthogonal projection \( \pi_S \) onto~\( S \):  
\[
\pi_S f = \sum_{j=1}^{\dim S} \big\langle f, f_j \big\rangle_{L_2(\sigma_n)} f_{j}\,, 
\quad\, f \in L_2(\sigma_n)\,.
\]
Evaluating this expression at \( x \in \mathbb{S}^{n-1} \), we obtain  
\[
\pi_S f(x) =
\sum_{j=1}^{\dim S} 
\Big(\int_{\mathbb{S}^{n-1}} f(y) \overline{f_j(y)}\, d\sigma_n(y)\Big) f_j(x)
=
\int_{\mathbb{S}^{n-1}} f(y) \,
\Big(\sum_{j=1}^{\dim S} 
\overline{f_j(y)} f_j(x)  \Big)\, d\sigma_n(y).
\]
Thus the function \( \mathbf{k}_S: \mathbb{S}^{n-1} \times \mathbb{S}^{n-1} \to \mathbb{C} \) is given by  
\begin{align} \label{ker-def}
  \mathbf{k}_S(x,y) := \sum_{j=1}^{\dim S} \overline{f_j(x)} f_j(y),   \quad x,y\in \mathbb{S}^{n-1}.
\end{align}
With this definition, it is immediate that \( \mathbf{k}_S \) satisfies (i).  
To verify the remaining properties, we just need to perform routine calculations which are left to the reader.

If $S$ is invariant under the action of the orthogonal group, then the norm of the orthogonal projection onto $S$ can be computed in terms of the $L_1$-norm of its reproducing kernel. As usual, we denote by $e_1$ the canonical basis vector with 1 in the first coordinate and 0 elsewhere.

\begin{remark}  
\label{main-ibkA}
Let $S$ be an $\mathcal O_n$-invariant, finite-dimensional subspace $S$ of  $C(\mathbb{S}^{n-1})$. Then we have
\begin{equation} 
 \big\|\pi_S:C(\mathbb{S}^{n-1}) \to S\big\|=\int_{\mathbb{S}^{n-1} }|\mathbf{k}_S(e_1,y) |\, d\sigma_n(y) \,.
\end{equation}
\end{remark}

\begin{proof}
By property $(i)$ in Lemma~\ref{lemm: reproducing} and the Riesz representation theorem
we have that
\begin{align*}
\big\|\pi_S:C(\mathbb{S}^{n-1}) \to S\big\|
&
=
\sup_{\|f\|_\infty \leq  1} \sup_{x \in \mathbb{S}^{n-1}}
\Big|   \int_{\mathbb{S}^{n-1} } f(y) \overline{\mathbf{k}_S(x,y)}\, d \sigma_n(y) \Big|
\\&
=
\sup_{x \in \mathbb{S}^{n-1}}
\sup_{\|f\|_\infty \leq  1}
\Big|   \int_{\mathbb{S}^{n-1} } f(y) \overline{\mathbf{k}_S(x,y)}\,d \sigma_n(y) \Big|
=
\sup_{x \in \mathbb{S}^{n-1}}
\int_{\mathbb{S}^{n-1} }|\mathbf{k}_S(x,y) |\,d\sigma_n(y)\,.
\end{align*}
Then we use property $(iv)$  and the rotation invariance  of the surface measure $\sigma_n$ to see that 
\begin{equation*}
\int_{\mathbb{S}^{n-1}}|\mathbf{k}_S(x,y) |\,d\sigma_n(y)
=
\int_{\mathbb{S}^{n-1} }|\mathbf{k}_S(e_1,y) |\,d\sigma_n(y), \quad\, x\in \mathbb{S}^{n-1}\,. 
\qedhere
\end{equation*}
\end{proof}

\smallskip

\section{Projection constants: abstract part}
We will focus on finite-dimensional subspaces \( S \) of \( C(\mathbb{S}^{n-1}) \) for which the restriction of the orthogonal projection \( \pi_S \) on \( C(\mathbb{S}^{n-1}) \) is the unique \( \mathcal{O}_n \)-equivariant projection. This means that \( \pi_S \) is the unique projection \( \mathbf{Q} \) on \( C(\mathbb{S}^{n-1}) \) onto \( S \) that satisfies the condition
\[
\mathbf{Q} (f \circ A) = \mathbf{Q}(f) \circ A, \quad\, f \in S, \, \,\, A \in \mathcal{O}_{n}\,.
\]

\subsection{Averaging projections} \label{access}
The following result is one of our main abstract tools. It shows how to compute the projection constant of a subspace $S$ in terms of the $L_1$-norm of  the kernel $\mathbf{k}_S$. 

\begin{theorem} \label{main-ibk}
Let $S$  be an $\mathcal{O}_n$-invariant, finite-dimensional  subspace of $C(\mathbb{S}^{n-1})$ such that $\pi_S$ is the unique $\mathcal O_n$-equivariant projection from $C(\mathbb{S}^{n-1})$ onto $S$. Then
\[
\boldsymbol{\lambda}(S) =  \big\|\pi_S:C(\mathbb{S}^{n-1}) \to S\big\|=\int_{\mathbb{S}^{n-1} }|\mathbf{k}_S(e_1,y) |\,d\sigma_n(y) \,.
\]
\end{theorem}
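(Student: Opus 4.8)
The plan is to combine the integral formula already established in Remark~\ref{main-ibkA} with a Haar-averaging argument that identifies $\pi_S$ as a \emph{minimal} projection. Since $S$ is a finite-dimensional subspace of the $C(K)$-space $C(\mathbb{S}^{n-1})$ (with $K=\mathbb{S}^{n-1}$), formula~\eqref{eq: l_infty inyective} gives
\[
\boldsymbol{\lambda}(S) = \boldsymbol{\lambda}(S, C(\mathbb{S}^{n-1})) = \inf\big\{\|P\| : P \in \mathcal{L}(C(\mathbb{S}^{n-1}), S),\ P|_S = \id_S\big\}.
\]
As $\pi_S$ is itself such a projection (its range $S$ consists of continuous functions, and its finite norm is exactly what Remark~\ref{main-ibkA} computes), one immediately has $\boldsymbol{\lambda}(S) \le \|\pi_S\|$. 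Thus the whole content lies in the reverse inequality: that $\|\pi_S\|$ does not exceed $\|P\|$ for \emph{any} projection $P$ onto $S$. Once this is shown, the displayed infimum equals $\|\pi_S\|$, and Remark~\ref{main-ibkA} supplies the integral expression.

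For $A \in \mathcal{O}_n$ let $L_A \colon C(\mathbb{S}^{n-1}) \to C(\mathbb{S}^{n-1})$, $L_A f := f \circ A$, be the associated composition operator; each $L_A$ is a surjective isometry with $L_A^{-1} = L_{A^{-1}}$, and since $f$ is uniformly continuous on the compact sphere the orbit map $A \mapsto L_A f$ is continuous. Given an arbitrary projection $P$ from $C(\mathbb{S}^{n-1})$ onto $S$, I would define its symmetrization by the Bochner integral
\[
\tilde P f := \int_{\mathcal{O}_n} \big(L_{A^{-1}} P L_A f\big) \, d\mathbf{m}(A), \qquad f \in C(\mathbb{S}^{n-1}),
\]
against the normalized Haar measure $\mathbf{m}$ on $\mathcal{O}_n$. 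The integrand is a continuous $S$-valued function---here the $\mathcal{O}_n$-invariance of $S$ is used, ensuring $L_{A^{-1}} P L_A$ takes values in $S$---so $\tilde P$ is a well-defined bounded linear operator into the closed subspace $S$.

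Three verifications then close the argument. First, $\tilde P$ is a projection onto $S$: for $f \in S$ one has $L_A f \in S$, hence $P L_A f = L_A f$ and $L_{A^{-1}} P L_A f = f$, so $\tilde P f = \int_{\mathcal{O}_n} f \, d\mathbf{m}(A) = f$. Second, the triangle inequality for the Bochner integral together with $\|L_A\| = \|L_{A^{-1}}\| = 1$ yields $\|\tilde P f\| \le \int_{\mathcal{O}_n} \|P\|\,\|f\| \, d\mathbf{m}(A) = \|P\|\,\|f\|$, whence $\|\tilde P\| \le \|P\|$. Third, $\tilde P$ is $\mathcal{O}_n$-equivariant: using the relation $L_A L_B = L_{BA}$ and the translation invariance of $\mathbf{m}$ (substituting $C = BA$, so that $L_{A^{-1}} = L_B L_{C^{-1}}$) one pulls $L_B$ out of the integral to obtain $\tilde P L_B = L_B \tilde P$ for every $B \in \mathcal{O}_n$, which is precisely the equivariance condition as in~\eqref{lemma: pi_S commutes with L_V}. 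By the uniqueness hypothesis on $\pi_S$, this forces $\tilde P = \pi_S$, and therefore $\|\pi_S\| = \|\tilde P\| \le \|P\|$. Taking the infimum over all projections $P$ gives $\|\pi_S\| \le \boldsymbol{\lambda}(S)$, and combining with the trivial inequality and Remark~\ref{main-ibkA} yields both equalities.

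The conceptual core, and the step requiring the most care, is the equivariance computation combined with the uniqueness hypothesis: symmetrization never increases the norm and always outputs an $\mathcal{O}_n$-equivariant projection, so uniqueness collapses every symmetrized projection onto $\pi_S$ itself. The remaining technical point to handle cleanly is the well-definedness and the elementary properties (linearity, the norm bound, and commutation with bounded operators) of the operator-valued integral over $\mathcal{O}_n$; these rest on the strong continuity of $A \mapsto L_A f$ and the invariance of the Haar measure recorded around~\eqref{greatrudy}.
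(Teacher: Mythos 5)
Your proof is correct and follows essentially the same route as the paper: average an arbitrary projection $P$ onto $S$ against the Haar measure of $\mathcal{O}_n$, verify the average is an $\mathcal{O}_n$-equivariant projection of norm at most $\|P\|$, invoke the uniqueness hypothesis to conclude it equals $\pi_S$, and finish with the reduction~\eqref{eq: l_infty inyective} and the integral formula of Remark~\ref{main-ibkA}. Your write-up is in fact somewhat more explicit than the paper's about the verification of the projection property, the equivariance computation, and the role of~\eqref{eq: l_infty inyective}, but the underlying argument is identical.
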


The proof uses the standard technique of averaging projections - see, e.g.~\cite{rudin1962projections} or \cite[Theorem III.B.13]{wojtaszczyk1996banach}.

\begin{proof}
Take any projection $\mathbf{Q}$ of $C(\mathbb{S}^{n-1})$ onto $S$, and for every $A\in \mathcal O_n$ consider the composition operator    $\phi_A \in \mathcal{L}(C(\mathbb{S}^{n-1}))$ given by 
\[
\phi_A(f) := f\circ A, \quad\, f\in C(\mathbb{S}^{n-1})\,.
\]
Clearly, $\phi_{id}$ is the identity on $C(\mathbb{S}^{n-1})$ (where $id$ denotes the unity in $\mathcal O_n$), and $\phi_{AB} = \phi_B \phi_A$ for all $A, B \in \mathcal O_n$. Furthermore,
\begin{equation*}
    \mathcal O_n \ni A \mapsto \phi_{A^{-1}}\mathbf{Q} \, \phi_A \in \mathcal{L}(C(\mathbb{S}^{n-1}))
\end{equation*}
is Bochner integrable, as it is bounded by the Banach-Steinhaus theorem and measurable due to its weak measurability. Then the Bochner integral
\begin{equation*}\label{equation rudy}
\int_{\mathcal O_n} \phi_{A^{-1}}\mathbf{Q}\,\phi_A\,d\textbf{m}(A)
\in \mathcal{L}(C(\mathbb{S}^{n-1})),
\end{equation*} where $\textbf{m}$ is the normalized Haar measure on $\mathcal{O}_n$, 
is a projection from $C(\mathbb{S}^{n-1})$ onto $S$, which is $\mathcal O_n$-equivariant. By assumption $\pi_S$ is the unique projection with this property, so we have that 
\begin{equation*}
\pi_S= \int_{\mathcal O_n} \phi_{A^{-1}}\mathbf{Q} \,\phi_A\,d\textbf{m}(A)\,.
\end{equation*}
Consequently,  the first equality of the statement follows by the triangle inequality (for Bochner integrals) and the second is just
a~consequence of Remark~\ref{main-ibkA}.
\end{proof}

The next statement shows that if several orthogonal subspaces each admit a unique $\mathcal{O}_n$-equivariant projection, then their direct sum also admits a unique $\mathcal{O}_n$-equivariant projection. 

\begin{proposition} \label{sum of accessible}
Let $\{S_k\}_{k=1}^r$ be finitely many  
 $\mathcal{O}_n$-invariant, finite-dimensional subspaces of $C(\mathbb{S}^{n-1})$ which are \begin{enumerate}
    \item[i)]  pairwise $L_2(\sigma_n)$-orthogonal, and such that
    \item[ii)] each $\pi_{S_k}\colon C(\mathbb{S}^{n-1}) \to S_k$ is the unique $\mathcal{O}_n$-equivariant projection.
\end{enumerate}
Then for $S = \bigoplus_{k=1}^r S_k$, the following hold{\rm:}
\begin{enumerate}
    \item The projection $\pi_S = \sum_{k=1}^r \pi_{S_k}\colon C(\mathbb{S}^{n-1}) \to S$ is the unique $\mathcal{O}_n$-equivariant projection.
    \item The reproducing kernel of $S$ decomposes as $\mathbf{k}_S(x,y) = \sum_{k=1}^r \mathbf{k}_{S_k}(x,y)$, for all $x,y\in \mathbb{S}^{n-1}$.
\end{enumerate}
\end{proposition}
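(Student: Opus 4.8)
The plan is to dispatch part (2) first, since it underlies part (1), and then to split part (1) into an existence claim and a uniqueness claim, the latter being the crux. For part (2), I would note that the pairwise $L_2(\sigma_n)$-orthogonality in hypothesis (i) means that concatenating orthonormal bases of the individual summands $S_1,\dots,S_r$ produces an orthonormal basis of $S=\bigoplus_{k=1}^r S_k$. Substituting this basis into the explicit kernel formula~\eqref{ker-def} then splits the defining sum into the blocks corresponding to each $S_k$, yielding $\mathbf{k}_S(x,y)=\sum_{k=1}^r \mathbf{k}_{S_k}(x,y)$ at once.

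For the existence part of (1), the identity $\pi_S=\sum_{k=1}^r \pi_{S_k}$ is the standard fact that the orthogonal projection onto an $L_2$-orthogonal direct sum is the sum of the projections onto the summands. Each $\pi_{S_k}$ is $\mathcal{O}_n$-equivariant by~\eqref{lemma: pi_S commutes with L_V}, since each $S_k$ is $\mathcal{O}_n$-invariant, and a finite sum of equivariant operators is again equivariant; hence $\pi_S$ is an $\mathcal{O}_n$-equivariant projection of $C(\mathbb{S}^{n-1})$ onto $S$.

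The uniqueness part is where the real work lies, and I expect it to be the main obstacle. Given an arbitrary $\mathcal{O}_n$-equivariant projection $\mathbf{Q}$ from $C(\mathbb{S}^{n-1})$ onto $S$, I would introduce the auxiliary operators $\mathbf{Q}_k:=\pi_{S_k}\mathbf{Q}$ and argue that each is itself an $\mathcal{O}_n$-equivariant projection onto $S_k$. Its range lies in $S_k$; moreover, for $s\in S_k\subset S$ one has $\mathbf{Q}(s)=s$ and hence $\mathbf{Q}_k(s)=\pi_{S_k}(s)=s$, which shows simultaneously that the range is all of $S_k$ and that $\mathbf{Q}_k$ restricts to the identity on $S_k$, forcing $\mathbf{Q}_k^2=\mathbf{Q}_k$. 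Equivariance of $\mathbf{Q}_k$ follows by composing the equivariance of $\mathbf{Q}$ with that of $\pi_{S_k}$. This verification that $\mathbf{Q}_k$ is a bona fide projection onto $S_k$ is the delicate point, since only then does the single-block hypothesis (ii) apply and force $\mathbf{Q}_k=\pi_{S_k}$, i.e. $\pi_{S_k}\mathbf{Q}=\pi_{S_k}$ for every $k$. Summing over $k$ and using $\pi_S=\sum_k \pi_{S_k}$ gives $\pi_S\mathbf{Q}=\pi_S$, while $\mathbf{Q}$ taking values in $S$ together with $\pi_S|_S=\id_S$ gives $\pi_S\mathbf{Q}=\mathbf{Q}$; comparing the two yields $\mathbf{Q}=\pi_S$. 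Once the projection property of $\mathbf{Q}_k$ is secured, the remainder collapses to formal manipulations with the orthogonal decomposition and equivariance.
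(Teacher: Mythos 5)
Your proposal is correct and follows essentially the same route as the paper: the uniqueness argument via the auxiliary operators $\mathbf{Q}_k := \pi_{S_k}\mathbf{Q}$, shown to be $\mathcal{O}_n$-equivariant projections onto $S_k$ so that hypothesis (ii) forces $\mathbf{Q}_k = \pi_{S_k}$ and hence $\mathbf{Q} = \pi_S$, is exactly the paper's proof (the paper merely reduces to $r=2$ by induction and obtains the kernel identity from uniqueness of the reproducing kernel rather than from concatenated orthonormal bases, both immaterial differences). Your explicit verification that each $\mathbf{Q}_k$ is genuinely a projection onto $S_k$ fills in a step the paper only asserts, which is a welcome addition.
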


\begin{proof}
By induction it suffices to show this for $r=2$. 
That $S_1 \oplus S_2$ is  $\mathcal O_n$-invariant  is straightforward.
Observe that $\pi_{S_1 \oplus S_2} = \pi_{S_1} + \pi_{S_2}$
defines the orthogonal projection from $L_2(\sigma_n)$ onto $S_1 \oplus S_2$.
Using the properties of reproducing kernels, we obtain that for every $f \in L_2(\sigma_n)$ and $x \in K$,
\[
(\pi_{S_1 \oplus S_2}f)(x) = (\pi_{S_1}f)(x) + (\pi_{S_2}f)(x) =
\int_{\mathbb{S}^{n-1}}f(y)\overline{(\mathbf{k}_{S_1}(x, y)+\mathbf{k}_{S_2}(x, y))} \, d\sigma_n(y)
\,.
\]
Hence by the uniqueness of reproducing kernel  we get
\[
\mathbf{k}_{S_1+S_2}(x, y)
 \,=\,
\mathbf{k}_{S_1}(x, y) +  \mathbf{k}_{S_2}(x, y)\,, \quad\, x,y\in \mathbb{S}^{n-1}\,.
\]
Let us now show that $\pi_S$ is the unique $\mathcal O_n$-equivariant projection. So let $\mathbf{Q}$ be a projection from
$C(\mathbb{S}^{n-1})$ onto $S=S_1 \oplus S_2$ which commutes with the action of $\mathcal O_n$ on
$C(\mathbb{S}^{n-1})$. We claim that $\mathbf{Q} = \pi_{S_1 \oplus S_2}$. Indeed, consider the two projections
\[
\text{$\mathbf{Q}_{S_1} = \pi_{S_1}\circ   \mathbf{Q} $ \quad and \quad $\mathbf{Q}_{S_2} = \pi_{S_2}\circ   \mathbf{Q} $}
\]
from $C(\mathbb{S}^{n-1})$ onto $S_1$ and ${S_2}$, respectively.
Since the projections $\pi_{S_1}$ and $\pi_{S_2}$ on $C(\mathbb{S}^{n-1})$ are $\mathcal{O}_n$-equivariant, the same holds for $\mathbf{Q}_{S_1}$ and $\mathbf{Q}_{S_2}$. Then by uniqueness properties of the projections on $S_1$ and $S_2$ we see that
\[
\text{$\mathbf{Q}_{S_1} = \pi_{S_1}$ \quad and \quad $\mathbf{Q}_{S_2} = \pi_{S_2}$}\,,
\]
and hence for all $f \in C(\mathbb{S}^{n-1})$ as desired
\[
\mathbf{Q}f = \pi_{S_1}(\mathbf{Q}f)  + \pi_{S_2}(\mathbf{Q}f) 
= \mathbf{Q}_{S_1}f + \mathbf{Q}_{S_2}f
= \pi_{S_1}f + \pi_{S_2}f 
= \pi_{S_1 \oplus S_2} f
\,.
\]
This concludes the proof.
\end{proof}

We would like to acknowledge the anonymous referee for suggesting a simpler argument, which leads directly to the following uniqueness statement.

\begin{proposition}\label{cor:unique_projection}
The orthogonal projection 
$\pi_{\mathcal{H}_d(\mathbb{S}^{n-1})} \colon
C(\mathbb{S}^{n-1}) \to \mathcal{H}_d(\mathbb{S}^{n-1})$
is the unique $\mathcal O_n$-equi\-variant projection 
from $C(\mathbb{S}^{n-1})$ onto $\mathcal{H}_d(\mathbb{S}^{n-1})$.

The analogous statement also holds if we replace $\mathcal{H}_d(\mathbb{S}^{n-1})$ by $\mathcal P_d(\mathbb{S}^{n-1})$ or by $\mathcal P_{\leq d}(\mathbb{S}^{n-1})$.
\end{proposition}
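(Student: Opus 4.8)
The plan is to derive uniqueness from Schur's lemma, exploiting that the spaces $\mathcal{H}_k(\mathbb{S}^{n-1})$ are irreducible and pairwise non-isomorphic as complex $\mathcal{O}_n$-modules, combined with the density of $\mathcal{H}(\mathbb{S}^{n-1})$ in $C(\mathbb{S}^{n-1})$ (Theorem~\ref{realdense}).

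First I would record the two representation-theoretic facts that drive the argument: for every $n \geq 2$ and $d \geq 0$, the space $\mathcal{H}_d(\mathbb{S}^{n-1})$ is an irreducible complex $\mathcal{O}_n$-module, and for $k \neq d$ the modules $\mathcal{H}_k(\mathbb{S}^{n-1})$ and $\mathcal{H}_d(\mathbb{S}^{n-1})$ are non-isomorphic. Irreducibility is standard for $n \geq 3$ already under $SO(n)$; for $n = 2$ one notes that the reflection contained in $\mathcal{O}_2$ interchanges the two characters $e^{\pm i d\theta}$ spanning $\mathcal{H}_d(\mathbb{S}^1)$, so the space remains irreducible over $\mathbb{C}$. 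Non-isomorphism follows because the Laplace--Beltrami operator commutes with the $\mathcal{O}_n$-action and acts on $\mathcal{H}_k(\mathbb{S}^{n-1})$ by the scalar $-k(k+n-2)$; these eigenvalues are distinct for distinct $k$, so any intertwiner between $\mathcal{H}_k$ and $\mathcal{H}_d$ with $k \neq d$ must vanish.

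Next, let $\mathbf{Q}$ be an arbitrary $\mathcal{O}_n$-equivariant projection of $C(\mathbb{S}^{n-1})$ onto $S := \mathcal{H}_d(\mathbb{S}^{n-1})$. I would restrict $\mathbf{Q}$ to the dense subspace $\mathcal{H}(\mathbb{S}^{n-1}) = \bigoplus_k \mathcal{H}_k(\mathbb{S}^{n-1})$ (Proposition~\ref{orthodeco}) and inspect each component map $\mathbf{Q}|_{\mathcal{H}_k(\mathbb{S}^{n-1})} \colon \mathcal{H}_k(\mathbb{S}^{n-1}) \to S$, which is again $\mathcal{O}_n$-equivariant since $\mathcal{H}_k(\mathbb{S}^{n-1})$ is $\mathcal{O}_n$-invariant. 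By Schur's lemma this map is zero for $k \neq d$ and a scalar multiple of the identity for $k = d$; because $\mathbf{Q}$ is a projection onto $S$ it restricts to the identity on $S$, which forces that scalar to be $1$. Hence $\mathbf{Q}$ and the orthogonal projection $\pi_S$ agree on $\mathcal{H}(\mathbb{S}^{n-1})$, as $\pi_S$ likewise annihilates every $\mathcal{H}_k$ with $k \neq d$ and fixes $\mathcal{H}_d$. Since both $\mathbf{Q}$ and $\pi_S$ are bounded operators on $C(\mathbb{S}^{n-1})$ (boundedness of $\pi_S$ coming from its kernel representation in Remark~\ref{main-ibkA}) that coincide on a dense subspace, they are equal, which settles the case of $\mathcal{H}_d(\mathbb{S}^{n-1})$.

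Finally, for $\mathcal{P}_d(\mathbb{S}^{n-1})$ and $\mathcal{P}_{\leq d}(\mathbb{S}^{n-1})$ I would feed the result just obtained into Proposition~\ref{sum of accessible}: by Proposition~\ref{orthodeco} each of these spaces is an $L_2(\sigma_n)$-orthogonal direct sum of spaces $\mathcal{H}_k(\mathbb{S}^{n-1})$, every one of which now admits a unique $\mathcal{O}_n$-equivariant projection, so hypotheses (i)--(ii) of Proposition~\ref{sum of accessible} hold and its conclusion transfers directly. The only genuinely delicate step is the first one, namely establishing irreducibility and pairwise non-isomorphism over $\mathbb{C}$---in particular the $n = 2$ case, where a naive $SO(2)$-argument fails and one must use the reflection in $\mathcal{O}_2$; once these are in place, the remainder is Schur's lemma together with a routine density argument.
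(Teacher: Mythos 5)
Your proof is correct, but it takes a genuinely different route from the paper. The paper's argument (suggested by its referee) is deliberately elementary: after reducing, exactly as you do, to showing that an $\mathcal O_n$-equivariant projection $\mathbf{Q}$ kills each $\mathcal{H}_k(\mathbb{S}^{n-1})$ with $k\neq d$, it never invokes irreducibility. Instead it averages over $\mathcal{O}_n$: writing $f\circ A^{-1}$ in an orthonormal basis of $\mathcal{H}_k(\mathbb{S}^{n-1})$, using equivariance, Fubini, and a change of variable by a reflection exchanging two points, it exhibits $\mathbf{Q}(f)(x)=\int_{\mathbb{S}^{n-1}} f(y)\,g(x,y)\,d\sigma_n(y)$ with $g(x,\cdot)\in\mathcal{H}_d(\mathbb{S}^{n-1})$, so that $\mathbf{Q}(f)=0$ follows from $L_2$-orthogonality of $\mathcal{H}_k$ and $\mathcal{H}_d$. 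Your route replaces this averaging construction by the classical representation theory of $\mathcal{O}_n$ on spherical harmonics: irreducibility of each $\mathcal{H}_k(\mathbb{S}^{n-1})$ (with the correct extra care at $n=2$, where the reflection is essential) plus pairwise non-isomorphism, followed by Schur's lemma and the same density argument; the treatment of $\mathcal P_d$ and $\mathcal P_{\leq d}$ via Propositions~\ref{orthodeco} and~\ref{sum of accessible} is identical in both proofs. What your approach buys is brevity and conceptual transparency, at the price of importing nontrivial classical facts; what the paper's approach buys is self-containedness, needing nothing beyond invariance of the measures and orthogonality. One point in your write-up deserves tightening: to deduce non-isomorphism you say the Laplace--Beltrami operator ``commutes with the $\mathcal{O}_n$-action'' and has distinct eigenvalues $-k(k+n-2)$; commuting with the action on $L_2(\sigma_n)$ does not by itself force an \emph{abstract} intertwiner $T\colon\mathcal{H}_k\to\mathcal{H}_d$ to intertwine the Laplacians. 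The clean justification is that the spherical Laplacian is the action of the Casimir element of $\mathfrak{so}(n)$, which is natural and hence commutes with every intertwiner (or, alternatively, compare characters, or for $n\geq 3$ simply note that $\dim\mathcal{H}_k$ is strictly increasing in $k$, handling $n=2$ by restriction to $SO(2)$-characters). With that standard fact supplied, your argument is complete.
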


\begin{proof}
\noindent Take a  $\mathcal O_n$-equivariant projection 
$\mathbf{Q}$ from $C(\mathbb{S}^{n-1})$ onto $\mathcal{H}_d(\mathbb{S}^{n-1})$.  By Theorem~\ref{realdense} and Proposition~\ref{orthodeco} we know that $$C(\mathbb{S}^{n-1}) = \overline{\text{span}}_k \,\mathcal{H}_k(\mathbb{S}^{n-1})$$ is a decomposition of $C(\mathbb{S}^{n-1})$ into the $\mathcal{O}_n$-invariant, pairwise $L_2(\sigma_{n})$-orthogonal subspaces $\mathcal{H}_k(\mathbb{S}^{n-1})$. Hence it suffices to check that
\[
\mathbf{Q}\restrict{\mathcal{H}_k(\mathbb{S}^{n-1})}  = 0,  \quad\,  k \neq d\,.
\]
Take $f \in \mathcal{H}_k(\mathbb{S}^{n-1}),\, k \neq d$, and fix some orthonormal basis $(f_i)$ of $\mathcal{H}_k(\mathbb{S}^{n-1})$.
By the $\mathcal O_n$-invariance of $\mathcal{H}_k(\mathbb{S}^{n-1})$, for every  $A \in \mathcal{O}_n$ we have that
\[
f \circ A^{-1}  = \sum_i \,\left(\int_{\mathbb{S}^{n-1}}  (f \circ A^{-1})(y)  \overline{f_i(y)} d\sigma_{n}(y) \,\right)\, f_i\,,
\]
and hence  we conclude from the $\mathcal O_n$-equivariance of $\textbf{Q}$ that for every $x \in \mathbb{S}^{n-1}$
\begin{align*}
  \mathbf{Q}(f)(x)
    =  \mathbf{Q}(f \circ A^{-1})(Ax)
      =\sum_i \,\left(\int_{\mathbb{S}^{n-1}}   (f \circ A^{-1})(y)  \overline{f_i(y)} d\sigma_{n}(y) \,\right)\, \mathbf{Q}(f_i)(Ax)\,.
\end{align*}
Then by the rotation invariance of the surface measure
\begin{align*}
  \mathbf{Q}(f)(x)
     =
  \int_{\mathbb{S}^{n-1}}   f (y) \,\, \bigg[\sum_i   \overline{f_i(Ay)} \mathbf{Q}(f_i)(Ax)\bigg]\, d\sigma_{n}(y) \,.
\end{align*}
Integrating with respect to the Haar measure on $\mathcal{O}_n$ and using Fubini's theorem, we conclude that for all
$x \in \mathbb{S}^{n-1}$
\[
\mathbf{Q}(f)(x)
     =
  \int_{\mathbb{S}^{n-1}}   f (y) \,\,   \bigg[\int_{\mathcal{O}_n}\sum_i   \overline{f_i(Ay)}  \mathbf{Q}(f_i)(Ax)
  d\textbf{m}(A)\bigg]\,
   d\sigma_{n}(y)\,.
\]

It is worth noting that for every pair  fixed distinct points $x, y \in \mathbb{S}^{n-1}$, there exists an orthogonal transformation $B \in \mathcal{O}_n$ such that $Bx = y$ and $By = x$. One such transformation is the reflection across the hyperplane orthogonal to $x - y$, which is given explicitly by
\[
B = \id - 2\, \frac{\langle \cdot , x - y \rangle}{\|x - y\|_2^2} (x - y).
\]
This linear operator fixes pointwise the orthogonal complement of $\operatorname{span}\{x - y\}$ and satisfies $B^2 = \id$, so it is a symmetric involution that exchanges $x$ and $y$.

Fixing $x,y \in \mathbb{S}^{n-1}$ we make the change of integration variable $A \in \mathcal{O}_n \leftrightarrow A \circ B \in \mathcal{O}_n$. Thus, using  the rotation invariance  of the Haar measure on $\mathcal{O}_n$
and again  the $\mathcal O_n$-equivariance of $\mathbf{Q}$, we see that for all
$x \in \mathbb{S}^{n-1}$
\[
\mathbf{Q}(f)(x)
     =
  \int_{\mathbb{S}^{n-1}}   f (y) \, g(x,y)
   d\sigma_{n}(y)\,,
\]
where
\[
g(x,y) := \int_{\mathcal{O}_n}\sum_i   \overline{f_i(Ax)} \,\mathbf{Q}(f_i \circ A)(y)
  d\textbf{m}(A)\,,\quad x,y \in  \mathbb{S}^{n-1}\,.
\]
Then
$g(x,\cdot) \in \mathcal{H}_d(\mathbb{S}^{n-1})$ for all
$x \in \mathbb{S}^{n-1}$,
since $\mathbf{Q}(f_i \circ A)\in \mathcal{H}_d(\mathbb{S}^{n-1})$ for all possible  $i,A$
(to see this interpret the preceding integral as a Bochner integral of a~function with values in $\mathcal{H}_d(\mathbb{S}^{n-1})$). But on the other hand,  $f \in \mathcal{H}_k(\mathbb{S}^{n-1})$
with $k \neq d$, so that by orthogonality $\mathbf{Q}(f)=0$.
This shows that $\textbf{Q}$ coincides with $\pi_{\mathcal{H}_d(\mathbb{S}^{n-1})}$.

The analogous statements for $\mathcal P_d(\mathbb{S}^{n-1})$ or  $\mathcal P_{\leq d}(\mathbb{S}^{n-1})$, follow from a direct use of Propositions \ref{orthodeco} and \ref{sum of accessible}.
\qedhere
\end{proof}

\subsection{Axially invariance}
Our aim now is to find in Theorem~\ref{main-ibk} more concrete 
descriptions of the  kernels $\mathbf{k}_S(e_1,\cdot)$, whenever $S$ is one of the  three 
spaces 
$\mathcal{H}_{ d}(\mathbb{S}^{n-1})$,
$\mathcal{P}_{d}(\mathbb{S}^{n-1})$
and $\mathcal{P}_{\leq d}(\mathbb{S}^{n-1})$.

Although the content of this subsection is essentially familiar to the approximation theory community, we choose to provide full proofs. However, it is important to emphasize that we cannot directly cite the results we need, and in many cases, the proofs have been adapted or require some minor modifications. For this reason, we believe that offering a comprehensive overview is necessary, avoiding the reader's need to constantly refer to other texts to fill in the details.

First, we need to establish some notation. Let $\mathcal{O}_n(e_1)$ denote the subgroup of orthogonal transformations that fix $e_1$; that is, $A \in \mathcal{O}_n(e_1)$ if  $A$ is orthogonal and $Ae_1 = e_1$. We consider functions
$f:\mathbb{R}^{n} \to \mathbb{C}$
that are invariant under the action of this subgroup, meaning that $f \circ A = f$ for every $A \in \mathcal{O}_n(e_1)$. We will refer to such functions, somewhat informally, as axially invariant.

Given $d \in \mathbb{N}_0$, we demonstrate the existence and uniqueness of  harmonic, axially invariant polynomials  (see also
 \cite[Section~2.1.2]{atkinson2012spherical}).

\smallskip
 
\begin{proposition} \label{propleg}
There is a unique  polynomial
$f \in \mathcal{H}_d(\mathbb{R}^n)$ which fulfills the following two properties{\rm:}
\begin{itemize}
\item[(a)]
$f\circ A = f$ for all $A\in \mathcal{O}_n(e_1)$\,,
\item[(b)]
$f(e_1) = 1$\,.
\end{itemize}
More precisely,  this unique polynomial is given by 
\begin{equation*}
L_{n,d}(x) = \sum_{j=0}^{\lfloor d/2\rfloor} b_j(n,d)  x_1^{d-2j}  \Vert(x_2, \ldots, x_{n})\Vert_2^{2j},
\end{equation*}
where the coefficients $b_j(n,d)$ are determined by{\rm:}
\begin{equation}\label{unique-harm}
b_j(n,d) = \frac{(-1)^j d!\, \Gamma\Big( \frac{n-1}{2}\Big)}{4^j j!\,(d-2j)!\, \Gamma\big( j+\frac{n-1}{2}\big)}.
\end{equation}
\end{proposition}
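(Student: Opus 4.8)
The plan is to proceed in three stages: reduce axial invariance to a concrete homogeneous Ansatz, translate harmonicity into a recurrence for the coefficients, and then read off existence, uniqueness and the closed form simultaneously from that recurrence.

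First I would exploit constraint (a). Writing $y := (x_2,\dots,x_n)$, the subgroup $\mathcal{O}_n(e_1)$ acts on $\mathbb{R}^n$ by fixing $x_1$ and acting as the full orthogonal group of $\mathbb{R}^{n-1}$ on $y$. The key reduction is that every $\mathcal{O}_{n-1}$-invariant polynomial in $y$ is a polynomial in the single invariant $\|y\|_2^2$; applying this coefficientwise in $x_1$ shows that any axially invariant polynomial is a polynomial in $x_1$ and $\|y\|_2^2$. Imposing $d$-homogeneity then forces the Ansatz
\[
f(x) = \sum_{j=0}^{\lfloor d/2\rfloor} c_j\, x_1^{d-2j}\,\|y\|_2^{2j},
\]
so the space of $d$-homogeneous axially invariant polynomials is spanned by the $x_1^{d-2j}\|y\|_2^{2j}$. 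Note that (b) amounts to $c_0 = 1$, since $y = 0$ at $e_1$.

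Next I would impose $\Delta f = 0$. Splitting $\Delta = \partial_{x_1}^2 + \Delta_y$, where $\Delta_y$ is the Laplacian in the $n-1$ variables $y$, and using the radial identity $\Delta_y \|y\|_2^{2j} = 2j(2j+n-3)\,\|y\|_2^{2j-2}$ (the radial Laplacian in dimension $n-1$), a short computation collecting the coefficient of $x_1^{d-2k-2}\|y\|_2^{2k}$ in $\Delta f$ shows that $\Delta f = 0$ is equivalent to
\[
c_{k+1} = -\,\frac{(d-2k)(d-2k-1)}{2(k+1)(2k+n-1)}\, c_k, \qquad 0 \le k < \lfloor d/2\rfloor.
\]
This recurrence, together with the normalization $c_0 = 1$, determines $c_0, c_1, \dots, c_{\lfloor d/2\rfloor}$ uniquely. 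Hence existence (the resulting $f$ is axially invariant and $d$-homogeneous by construction, and harmonic by the recurrence) and uniqueness follow at once, and the whole space of harmonic $d$-homogeneous axially invariant polynomials is one-dimensional. To identify $c_j$ with $b_j(n,d)$ it then suffices to check that the claimed closed form satisfies the same data: one verifies $b_0(n,d)=1$ directly, and computes the ratio $b_{k+1}(n,d)/b_k(n,d)$, where the factorials collapse via $(d-2k)!/(d-2k-2)! = (d-2k)(d-2k-1)$ and the Gamma factors via $\Gamma(k+1+\tfrac{n-1}{2}) = (k+\tfrac{n-1}{2})\Gamma(k+\tfrac{n-1}{2})$, yielding exactly the factor $-\tfrac{(d-2k)(d-2k-1)}{2(k+1)(2k+n-1)}$ above.

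The main obstacle is the invariant-theory reduction in the first step—justifying that axial invariance forces dependence on $x_1$ and $\|y\|_2^2$ alone (everything afterward is a routine but careful computation). I would make this self-contained for $n \ge 2$ as follows: grouping $f$ by powers of $x_1$, each coefficient is an $\mathcal{O}_{n-1}$-invariant homogeneous polynomial $p$ in $y$; since $\mathcal{O}_{n-1}$ acts transitively on the spheres of $\mathbb{R}^{n-1}$, such a $p$ is constant on each sphere, so $p(y) = c\,\|y\|_2^{\deg p}$, which is a polynomial only when $\deg p$ is even (the odd case being excluded already by $-\Id \in \mathcal{O}_{n-1}$). This forces $p$ to be a scalar multiple of a power of $\|y\|_2^2$ and pins down the Ansatz. (The degenerate case $n=1$, where $\Gamma(\tfrac{n-1}{2})$ is singular, is excluded.)
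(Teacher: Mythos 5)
Your proposal is correct and follows essentially the same route as the paper's proof: axial invariance reduces $f$ to the Ansatz $\sum_j c_j x_1^{d-2j}\|y\|_2^{2j}$ (the paper argues this by fixing $x_1$ and rotating $x'$ onto $\|x'\|_2 e_1$, which is the same transitivity-on-spheres idea you spell out coefficientwise), and harmonicity then yields exactly the recurrence $c_{k+1} = -\tfrac{(d-2k)(d-2k-1)}{2(k+1)(2k+n-1)}c_k$ with $c_0=1$, which pins down existence, uniqueness, and the closed form \eqref{unique-harm}. Your verification of the closed form via the ratio $b_{k+1}/b_k$ is equivalent to the paper's forward solution of the recurrence, so no gap remains.
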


Sometimes, we refer 
 to $L_{n,d} \colon \mathbb{R}^n \to \mathbb{R}$ as the Legendre harmonic of degree $d$ in $n$ variables.

\begin{proof}
Take $f$ verifying both properties and choose a representation
\[
f(x) = \sum_{\substack{\alpha \in \mathbb{N}^n_0\\ |\alpha|=d}} c_\alpha x^\alpha, \quad x \in \mathbb{R}^n\,.
\]
We write $x = (x_1,x') \in \mathbb{R} \times \mathbb{R}^{n-1}$ for  $x \in \mathbb{R}^n$, and note that
every $n\times n$-matrix $A$ belongs to $\mathcal{O}_n(e_1)$ if and only if it has the form
\begin{equation}\label{0a0}
  A=
\begin{pmatrix}
  1 & 0  \\
  0 & A' \\
 \end{pmatrix}
\end{equation}
with $A' \in \mathcal{O}_{n-1}$. We fix $x_1 \in \mathbb{R}$,  and consider the polynomial
\[
g(x') := f(x_1, x'),  \quad x' \in \mathbb{R}^{n-1}.
\]
Then for all $A' \in \mathcal{O}_{n-1}$ and $x' \in \mathbb{R}^{n-1}$
\[
 g(A'x')= f(x_1,A'x' ) = f(A(x_1,x') ) = f(x_1,x' )=  g(x')\,,
\]
and in particular $g(x') = g(-x')$. Consequently,
\[
g(t, 0, \ldots, 0) = \sum_{j=0}^{\lfloor d/2\rfloor} a_j t^{2j},  \quad t \in \mathbb{R}\,,
\]
where the complex coefficients $a_j= a_j(x_1)$ are functions in $x_1$.
Choosing for $x' \in \mathbb{R}^{n-1}$ some $A' \in \mathcal{O}_{n-1}$ such that $A' x' = \|x'\|_2 e_1$,
we obtain
\[
\sum_{|\alpha|=d} c_\alpha x_1^{\alpha_1} (x')^{(\alpha_2, \ldots, \alpha_{n-1})} =Q(x_1,x') = g(x') = g(\|x'\|_2 e_1) = \sum_{j=0}^{\lfloor d/2\rfloor} a_j(x_1) \|x'\|_2^{2j}\,.
\]
 If we  now compare coefficients,  then for all $x \in \mathbb{R}^n$
\[
f(x) = \sum_{j=0}^{\lfloor d/2\rfloor} b_j x_1^{d-2j} \|x'\|_2^{2j}\,.
\]
Finally, it remains to verify that  the coefficients $b_j = b_j(n,d)$  are those from \eqref{unique-harm}.  Indeed, using that $f$ is harmonic, a careful computation shows that for all $x \in \mathbb{R}^n$ 
\[
0 =\triangle f(x) =
\sum_{j=0}^{\lfloor d/2\rfloor}
\big[(d-2j)(d-2j-1) b_j + (2n-2 +4j)(j+1)b_{j+1}\big]
x_1^{d-2-2j} \|x'\|_2^{2j}\,.
\]
Then for $j = 1, \ldots, \lfloor d/2\rfloor  $
\[
b_j =  -  \frac{(d-2j+2)(d-2j+1)}{2j (2j+n-3) } b_{j-1}\,,
\]
and since $b_0 =1$, we obtain the  desired  claim.

Conversely, let us explain why the polynomial  $L_{n,d}$ has the desired properties. Clearly it is $d$-homogeneous, $L_{n,d}(e_1)=1$,
and the last calculations  prove  its  harmonicity. Moreover, looking at~\eqref{0a0} also property $(a)$ is clear.
  \end{proof}

In order to derive a few important consequences of the preceding proposition, we consider the equality
\begin{equation} \label{legent}
 L_{n,d}^{ \diamond}(\langle x, e_1\rangle) =  L_{n,d}(x)  \,, \quad \,x \in \mathbb{S}^{n-1}\,,
\end{equation}
where $L_{n,d}^{ \diamond}\colon [-1,1]\to \mathbb{R} $   is given by
\begin{equation} \label{eq: Legendre polynomial}
  L_{n,d}^{ \diamond}(t) :=
    \sum_{j=0}^{\lfloor d/2\rfloor} b_j(n,d) \,t^{d-2j}\,    (1-t^2)^{j}\,, \quad\, t\in [-1, 1]\,.
 \end{equation}
This  polynomial  is referred to as the 'Legendre polynomial of degree $d$ in $n$ dimensions'. The subsequent consequence of Proposition~\ref{propleg} is adapted from \cite[Theorem~2.8]{atkinson2012spherical}.

\begin{corollary} \label{corollary1}
Let $f \in \mathcal{H}_d(\mathbb{S}^{n-1})$. Then the following are
equivalent{\rm:}
\begin{itemize}
\item[(1)]
$f \circ A = f$ for all $A \in \mathcal{O}_n(e_1)$\,,
\item[(2)]
$f = f(e_1) L_{n,d}^{ \diamond}(\langle\pmb{\cdot},e_1\rangle)$\,.
\end{itemize}
\end{corollary}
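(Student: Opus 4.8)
The plan is to transfer everything to the ambient space $\mathbb{R}^n$, where Proposition~\ref{propleg} applies, and to exploit that the restriction map is a bijection on $d$-homogeneous polynomials. The implication $(2)\Rightarrow(1)$ is immediate: for $A\in\mathcal{O}_n(e_1)$ one has $Ae_1=e_1$ and $A$ orthogonal, so $\langle Ax,e_1\rangle=\langle Ax,Ae_1\rangle=\langle x,e_1\rangle$ for every $x\in\mathbb{S}^{n-1}$; hence $f(Ax)=f(e_1)L_{n,d}^{\diamond}(\langle Ax,e_1\rangle)=f(e_1)L_{n,d}^{\diamond}(\langle x,e_1\rangle)=f(x)$, which is exactly $(1)$.

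For $(1)\Rightarrow(2)$, first I would lift $f$ to the unique $\tilde f\in\mathcal{H}_d(\mathbb{R}^n)$ with $\tilde f|_{\mathbb{S}^{n-1}}=f$; such a lift exists and is unique because, exactly as in~\eqref{aprilo}, the restriction map from $d$-homogeneous polynomials to their boundary values on $\mathbb{S}^{n-1}$ is a bijection (a $d$-homogeneous polynomial vanishing on the sphere vanishes identically). The key point is that the axial invariance of $f$ on the sphere lifts to all of $\mathbb{R}^n$: both $\tilde f\circ A$ and $\tilde f$ are $d$-homogeneous, and they agree on $\mathbb{S}^{n-1}$ by $(1)$, so they agree everywhere, giving $\tilde f\circ A=\tilde f$ for all $A\in\mathcal{O}_n(e_1)$. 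Thus $\tilde f$ is a $d$-homogeneous harmonic polynomial satisfying property~(a) of Proposition~\ref{propleg}.

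It then remains to identify $\tilde f$ as a multiple of $L_{n,d}$. Writing $c:=f(e_1)=\tilde f(e_1)$, I would split into two cases. If $c\neq 0$, then $c^{-1}\tilde f$ satisfies both (a) and (b) of Proposition~\ref{propleg}, so by the uniqueness asserted there $c^{-1}\tilde f=L_{n,d}$, i.e. $\tilde f=c\,L_{n,d}$. If $c=0$, I would apply the same uniqueness to $\tilde f+L_{n,d}$, which is $d$-homogeneous, harmonic, axially invariant and takes the value $1$ at $e_1$; hence $\tilde f+L_{n,d}=L_{n,d}$, that is $\tilde f=0=c\,L_{n,d}$. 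In either case $\tilde f=f(e_1)L_{n,d}$, and restricting to $\mathbb{S}^{n-1}$ while using~\eqref{legent} yields $f=f(e_1)L_{n,d}^{\diamond}(\langle\cdot,e_1\rangle)$, as claimed. The only genuinely delicate step is the lifting of axial invariance from the sphere to $\mathbb{R}^n$ together with the bookkeeping that reduces the general case to the normalized statement of Proposition~\ref{propleg}; the handling of the degenerate case $f(e_1)=0$ is the one point where the normalization $f(e_1)=1$ in that proposition cannot be invoked directly.
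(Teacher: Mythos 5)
Your proposal is correct and follows essentially the same route as the paper's proof: lift $f$ to its unique $d$-homogeneous harmonic extension on $\mathbb{R}^n$, transfer the axial invariance from $\mathbb{S}^{n-1}$ to all of $\mathbb{R}^n$ via homogeneity, and then invoke the uniqueness statement of Proposition~\ref{propleg}. If anything, you are slightly more careful than the paper, which asserts ``there is some constant $c>0$ such that $g = c\,L_{n,d}$'' without addressing the degenerate case $f(e_1)=0$ (or non-real values of $f(e_1)$), whereas your trick of applying the uniqueness to $\tilde f + L_{n,d}$ handles that case cleanly.
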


\begin{proof}
Using the properties of $L_{n,d}$ isolated in Proposition~\ref{propleg}, the implication $(2)\Rightarrow(1)$ is obvious. In order to  check that  $(1)\Rightarrow(2)$, choose $g \in \mathcal{H}_d(\mathbb{R}^n)$ such that $g|_{\mathbb{S}^{n-1}} = f$. By assumption for each $A \in \mathcal{O}_n(e_1)$ and $x \in \mathbb{R}^n$
\[
g(Ax) = \|Ax\|^d_2 f \Big(\frac{Ax}{\|Ax\|_2}\Big)
= \|x\|^d_2 (f\circ A) \Big(\frac{x}{\|x\|_2}\Big) 
=  \|x\|^d_2 f \Big(\frac{x}{\|x\|_2}\Big) = g(x)\,.
\]
Hence by the 'uniqueness properties' of the Legendre harmonic $L_{n,d}$ from Proposition~\ref{propleg} there is some constant $c >0$ such that
$g = c L_{n,d}$ on $\mathbb{R}^n$, and inserting here $e_{1}$, we see that $c = g(e_1)=f(e_1)$. Then for all $\eta \in \mathbb{S}^{n-1}$, we get
\begin{equation*}
f(\eta) = g( \eta) = c L_{n,d}(\eta) = f(e_1) L_{n,d}^{ \diamond}(\langle\eta, e_1\rangle)\,. \qedhere
\end{equation*}
\end{proof}

 \smallskip

As promised,  we now consider  the reproducing kernels of the spaces $\mathcal{H}_{d}(\mathbb{S}^{n-1})$, $\mathcal{P}_{d}(\mathbb{S}^{n-1})$, and $\mathcal{P}_{\leq d}(\mathbb{S}^{n-1})$. The results presented are consequences of the fact that the kernel $ \textbf{k}_{\mathcal{H}_{ d}(\mathbb{S}^{n-1})}(e_1, \cdot)$ is axially invariant. 

\begin{proposition}
Let  $n \geq 2$ and $d \geq 1$. Then the following identities hold:
\begin{equation}\label{ker: un-oX}
 \textbf{k}_{\mathcal{H}_{ d}(\mathbb{S}^{n-1})}(e_1, \cdot) = N_{n,d} \,\,L_{n,d}^{ \diamond}(\langle \cdot, e_1 \rangle),
\end{equation}
\begin{equation}\label{ker: due}
 \textbf{k}_{\mathcal{P}_{d}(\mathbb{S}^{n-1})}(e_1, \cdot) = 
 \sum_{j=0}^{\lfloor d/2\rfloor} N_{n,d-2j} \,\,L_{n,d-2j}^{\diamond}(\langle \cdot, e_1 \rangle),
\end{equation}
\begin{equation}\label{ker: tres}
\textbf{k}_{\mathcal{P}_{\leq d}(\mathbb{S}^{n-1})}(e_1, \cdot) = 
\sum_{j=0}^{d} N_{n,j} \,\,L_{n,j}^{\diamond}(\langle \cdot, e_1 \rangle),
\end{equation}    
where $N_{n,k}$ denotes the dimension of $ \mathcal{H}_k(\mathbb{S}^{n-1})$ as defined in \eqref{dimfor}.
\end{proposition}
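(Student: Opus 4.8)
The plan is to establish the first identity \eqref{ker: un-oX} directly from the abstract properties of reproducing kernels collected in Lemma~\ref{lemm: reproducing}, and then to derive \eqref{ker: due} and \eqref{ker: tres} from it by combining the orthogonal decompositions of Proposition~\ref{orthodeco} with the additivity of kernels from Proposition~\ref{sum of accessible}.

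For \eqref{ker: un-oX}, I would set $S := \mathcal{H}_d(\mathbb{S}^{n-1})$ and study the one-variable function $f := \mathbf{k}_S(e_1, \cdot)$. By property (ii) of Lemma~\ref{lemm: reproducing}, $f \in \mathcal{H}_d(\mathbb{S}^{n-1})$. The decisive point is that $f$ is axially invariant: for every $A \in \mathcal{O}_n(e_1)$ and $y \in \mathbb{S}^{n-1}$, property (iv) together with $Ae_1 = e_1$ gives
\[
f(Ay) = \mathbf{k}_S(e_1, Ay) = \mathbf{k}_S(Ae_1, Ay) = \mathbf{k}_S(e_1, y) = f(y),
\]
so $f \circ A = f$. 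Thus $f$ satisfies condition (1) of Corollary~\ref{corollary1}, whence $f = f(e_1)\, L_{n,d}^{\diamond}(\langle \cdot, e_1 \rangle)$. The diagonal value provided by property (v) then yields $f(e_1) = \mathbf{k}_S(e_1, e_1) = \dim \mathcal{H}_d(\mathbb{S}^{n-1}) = N_{n,d}$, completing \eqref{ker: un-oX}.

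For \eqref{ker: due}, I would use the orthogonal decomposition $\mathcal{P}_d(\mathbb{S}^{n-1}) = \bigoplus_{j=0}^{\lfloor d/2 \rfloor} \mathcal{H}_{d-2j}(\mathbb{S}^{n-1})$ from \eqref{klm2}. Its summands are $\mathcal{O}_n$-invariant and pairwise $L_2(\sigma_n)$-orthogonal by Proposition~\ref{orthodeco}, and each carries a unique $\mathcal{O}_n$-equivariant projection by Proposition~\ref{cor:unique_projection}; hence both hypotheses of Proposition~\ref{sum of accessible} are met and its part (2) gives $\mathbf{k}_{\mathcal{P}_d(\mathbb{S}^{n-1})} = \sum_{j} \mathbf{k}_{\mathcal{H}_{d-2j}(\mathbb{S}^{n-1})}$. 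Evaluating at $x = e_1$ and inserting \eqref{ker: un-oX} into each term produces \eqref{ker: due}. The remaining identity \eqref{ker: tres} is obtained identically, now starting from $\mathcal{P}_{\leq d}(\mathbb{S}^{n-1}) = \bigoplus_{j=0}^{d} \mathcal{H}_j(\mathbb{S}^{n-1})$, which follows from \eqref{LH9} and \eqref{ludoAA}.

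The argument is mostly a careful assembly of results already in place, so I do not expect a serious obstacle. The single delicate step is recognizing that the axial invariance of $\mathbf{k}_S(e_1, \cdot)$ is exactly condition (1) of Corollary~\ref{corollary1}: restricting the global rotation invariance (iv) to the stabilizer subgroup $\mathcal{O}_n(e_1)$ is precisely what allows Corollary~\ref{corollary1} to collapse the kernel to the one-dimensional Legendre form, after which the diagonal identity (v) fixes the multiplicative constant.
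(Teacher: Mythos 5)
Your proposal is correct and follows essentially the same route as the paper's proof: axial invariance of $\mathbf{k}_{\mathcal{H}_d(\mathbb{S}^{n-1})}(e_1,\cdot)$ via property (iv) of Lemma~\ref{lemm: reproducing} feeding into Corollary~\ref{corollary1}, with the constant fixed by the diagonal identity (v), and then the kernel additivity of Proposition~\ref{sum of accessible} applied to the orthogonal decompositions of Proposition~\ref{orthodeco}. You in fact spell out details the paper leaves implicit (membership via property (ii), the normalization via (v), and the verification of hypothesis (ii) of Proposition~\ref{sum of accessible} through Proposition~\ref{cor:unique_projection} applied to the harmonic summands), all of which is sound and non-circular.
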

                               
\begin{proof}
The proof of Equation~\eqref{ker: un-oX} follows from Corollary~\ref{corollary1} and the rotation-invariance of the reproducing kernel (see Lemma~\ref{lemm: reproducing}, property (iv)).
To show the equalities given in Equations \eqref{ker: due} and \eqref{ker: tres} we simply use Propositions~\ref{orthodeco} and \ref{sum of accessible} and the proven description of the kernel in Equation \eqref{ker: un-oX}. 
\end{proof}

\subsection{Integral representations}

As announced our main intention in this article is to collect  information on the projection constants of
$\mathcal{H}_{ d}(\mathbb{S}^{n-1})$,
$\mathcal{P}_{d}(\mathbb{S}^{n-1})$
and $\mathcal{P}_{\leq d}(\mathbb{S}^{n-1})$. 
The following theorem follows directly from Theorem~\ref{main-ibk}
and Proposition~\ref{cor:unique_projection}.

\begin{theorem} \label{maino}
Let  $n \geq 2$ and $d \geq 1$. Then the following integral formulas hold:
\begin{equation}\label{un-oX}
  \boldsymbol{\lambda}\big(\mathcal{H}_{ d}(\mathbb{S}^{n-1})\big) = N_{n,d}\,\int_{\mathbb{S}^{n-1}}\,
\big|  \,\,L_{n,d}^{ \diamond}(\eta_1)\big| \,d\sigma_n(\eta),
\end{equation}
\begin{equation}\label{due}
 \boldsymbol{\lambda}\big(\mathcal{P}_{d}(\mathbb{S}^{n-1})\big)
 =
\int_{\mathbb{S}^{n-1}}\,
\Big| \sum_{j=0}^{\lfloor d/2\rfloor} N_{n,d-2j} \,\,L_{n,d-2j}^{\diamond}(\eta_1)\Big| \,d\sigma_n(\eta)\,,
\end{equation}
\begin{equation}\label{tres}
\boldsymbol{\lambda}\big(\mathcal{P}_{\leq d}(\mathbb{S}^{n-1})\big)
 = \int_{\mathbb{S}^{n-1}}\,
\Big| \sum_{j=0}^{d} N_{n,j} \,\,L_{n,j}^{\diamond}(\eta_1)\Big| \,d\sigma_n(\eta)\,.
\end{equation}
  \end{theorem}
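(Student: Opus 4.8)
The plan is to assemble the three stated formulas from the machinery already in place, since the substantive work has been carried out in Theorem~\ref{main-ibk}, Proposition~\ref{cor:unique_projection}, and the kernel identities \eqref{ker: un-oX}--\eqref{ker: tres}. First I would verify that each of the three spaces $S \in \{\mathcal{H}_d(\mathbb{S}^{n-1}),\, \mathcal{P}_d(\mathbb{S}^{n-1}),\, \mathcal{P}_{\leq d}(\mathbb{S}^{n-1})\}$ satisfies the hypotheses of Theorem~\ref{main-ibk}: each is a finite-dimensional, $\mathcal{O}_n$-invariant subspace of $C(\mathbb{S}^{n-1})$ (as recorded in the preliminaries), and by Proposition~\ref{cor:unique_projection} its orthogonal projection $\pi_S$ is the \emph{unique} $\mathcal{O}_n$-equivariant projection from $C(\mathbb{S}^{n-1})$ onto $S$. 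Note that the ranges $n \geq 2$ and $d \geq 1$ in the statement are precisely those under which the kernel identities are valid.

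Next, applying Theorem~\ref{main-ibk} to each such $S$ yields
\[
\boldsymbol{\lambda}(S) = \int_{\mathbb{S}^{n-1}} |\mathbf{k}_S(e_1, \eta)| \, d\sigma_n(\eta).
\]
I would then substitute the explicit kernels from \eqref{ker: un-oX}--\eqref{ker: tres}, using that for $\eta = (\eta_1, \ldots, \eta_n) \in \mathbb{S}^{n-1}$ one has $\langle \eta, e_1 \rangle = \eta_1$. For $S = \mathcal{H}_d(\mathbb{S}^{n-1})$ this gives $\mathbf{k}_S(e_1, \eta) = N_{n,d}\, L_{n,d}^{\diamond}(\eta_1)$, and since $N_{n,d} > 0$ it factors out of the absolute value, producing exactly \eqref{un-oX}. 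For $\mathcal{P}_d(\mathbb{S}^{n-1})$ and $\mathcal{P}_{\leq d}(\mathbb{S}^{n-1})$, substituting the corresponding finite sums of $N_{n,k}\, L_{n,k}^{\diamond}(\eta_1)$ directly into the integral yields \eqref{due} and \eqref{tres} verbatim.

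The only point requiring care---and the closest thing to an obstacle---is that in the latter two formulas the dimension factors $N_{n,k}$ cannot be pulled out of the integral: they weight distinct Legendre polynomials $L_{n,k}^{\diamond}$ whose signs vary on $[-1,1]$, so the absolute value must be applied to the entire weighted sum rather than term by term. This contrasts with the single-harmonic case, where the positivity of the scalar $N_{n,d}$ alone permits the clean factorization appearing in \eqref{un-oX}.
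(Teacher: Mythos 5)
Your proposal is correct and follows exactly the paper's own route: the paper derives Theorem~\ref{maino} precisely by combining Theorem~\ref{main-ibk} with the uniqueness statement of Proposition~\ref{cor:unique_projection} and then substituting the explicit kernel identities \eqref{ker: un-oX}--\eqref{ker: tres}. Your closing observation that the dimension factors $N_{n,k}$ must stay inside the absolute value in \eqref{due} and \eqref{tres} is a sound reading of the formulas, consistent with the paper.
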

When $d=1$, the outcome of Equation~\eqref{due} is consistent with Equation~\eqref{beginning} for the real Hilbert space~$\ell_2^n(\mathbb{R})$; see also  the concluding remark in Section~\ref{final remark}.

\section{Projection constants: concrete part}

In this final section, we rewrite the previous integral representations of the projection constants $\mathcal{H}_d(\mathbb{S}^{n-1})$, $\mathcal{P}_d(\mathbb{S}^{n-1})$, and $\mathcal{P}_{\leq d}(\mathbb{S}^{n-1})$ in terms of classical orthogonal polynomials. This reformulation enables us to uncover several of their hidden properties.

To do so, we need a few basic preliminaries on Jacobi polynomials.
They 
 constitute an important  rather wide class
of orthogonal polynomials, from which Chebyshev, Legendre and Gegenbauer polynomials follow as special cases. The main aim here is to relate them
with the Legendre polynomials $L_{n,d}^{ \diamond}$ defined above in  \eqref{eq: Legendre polynomial}.

We start with an observation that will be useful for our purposes. Integrating in spherical coordinates together with the  variable change  $t= \cos \theta$ and Equation
\eqref{omega}, it can be deduced that (see e.g., \cite[Proposition 9.1.2]{faraut2008analysis})
for every $f \in C[-1,1]$ we have
\begin{align}\label{umrechnen}
\begin{split}
  \int_{\mathbb{S}^{n-1}}
f(\eta_1) d\sigma_n(\eta)
&
=
\frac{1}{\pmb{\omega}_n}
\,\,
\int_{\mathbb{S}^{n-1}}
f(\eta_1) ds_n(\eta)
\\&
=
\frac{\pmb{\omega}_{n-1}}{\pmb{\omega}_n}
\,\, \int_{-1}^1 f(t) \; (1-t^2)^{\frac{n-3}{2}} dt
\\&
=
\frac{\Gamma(\frac{n}{2})}{\sqrt{\pi}\, \Gamma(\frac{n-1}{2})}
\,\, \int_{-1}^1 f(t) \; (1-t^2)^{\frac{n-3}{2}} dt \,.
\end{split}
  \end{align}

\smallskip
\noindent
{\bf Jacobi polynomials.} Let $\alpha, \beta  > -1$.  Then we denote the sequence of degree-$d$ Jacobi polynomials on the interval $[-1,1]$ by   $\big(P_d^{(\alpha, \beta)}\big)_{d \in \mathbb{N}_0}$. They are determined by their degree and the orthogonality with respect to the inner product
\[
\langle P,Q\rangle = \int_{-1}^{1} P(t)Q(t)\, (1-t)^\alpha  (1+t)^\beta dt\,,
\]
and normalized by 
\[
P_d^{(\alpha, \beta)}(1) = \binom{d+\alpha}{d}\,.
\]
There are various concrete  ways of writing such polynomials -- for example by the so-called Rodrigues formula
\begin{equation}\label{rodjac}
P_d^{(\alpha, \beta)}(t) = \frac{(-1)^d}{2^d d!}
(1-t)^{-\alpha} (1+t)^{-\beta}
\Big(\frac{\partial}{\partial t}\Big)^d \big[(1-t)^{\alpha+d} (1+t)^{\beta+d}\big], \quad\, t\in [-1, 1]\,.
\end{equation}

\smallskip
\noindent
{\bf Gegenbauer  polynomials.}
Fixing $\lambda>-1/2$, Gegenbauer polynomials are orthogonal on the interval $[-1,1]$ with respect to the weight function $(1-t^2)^{\lambda-\frac{1}{2}}$.
As usual, we take the following Rodrigues type formula as definition:
\begin{equation}\label{RodriguezGegenbauer}
C_d^{(\lambda)}(t)=\frac{(-1)^d}{2^d \;\; d!} \frac{\Gamma(\lambda + \frac{1}{2}) \;\; \Gamma(d+2\lambda)}{\Gamma(2\lambda) \;\; \Gamma(\lambda+n+\frac{1}{2})} (1-t^2)^{-\lambda+1/2} \Big(\frac{\partial}{\partial t}\Big)^d [(1-t^2)^{d+\lambda-\frac{1}{2}}], \quad\, t\in [-1, 1]\,.
\end{equation}
So, up to a multiplicative factor,  Gegenbauer polynomials $C_d^{(\lambda)}$ are the Jacobi polynomials $ P_d^{(\lambda- \frac{1}{2},\lambda- \frac{1}{2})}$,
more precisely,
\begin{align}\label{gegjac}
C_d^{(\lambda)}(t) = \frac{\Gamma(2\lambda +d)\Gamma(\lambda 
+\frac{1}{2})}{\Gamma(2\lambda)\Gamma(\lambda +\frac{1}{2}+d)}
\,P_d^{(\lambda- \frac{1}{2},\lambda- \frac{1}{2})}(t)\,.
\end{align}
We are going to need the following orthogonality relation: 
\begin{equation}\label{Gegenbauerpoly}
\int_{-1}^1 C_{d}^{(\alpha)}(t) \;\; C_{k}^{(\alpha)}(t) \;\;(1-t^2)^{\alpha-\frac{1}{2}} \;\;dt = \frac{\pi\;\; 2^{1-\alpha} \;\;\Gamma(d+2\alpha)}{d!\;\;(d+\alpha)\;\;[\Gamma(\alpha)]^2} \, \delta_{d,k}\,.
\end{equation}

\smallskip

\noindent
{\bf Legendre  polynomials of dimension $\pmb{n}$ and homogeneity $\pmb{d}$.} Recall the definition of the polynomials $L_{n,d}^{ \diamond}$
from Proposition~\ref{propleg}.  Again they may be formulated in 'Rodrigues form':
  For $n \ge 2$ and  $d\ge 2$ we have
   \begin{equation}\label{rodriguez}
     L_{n,d}^{ \diamond} (t)= (-1)^d \frac{\Gamma(\frac{n-1}{2})}{2^d \Gamma(d+\frac{n-1}{2})} \,\,
 (1-t^2)^{-\frac{n-3}{2}} \Big(\frac{\partial}{\partial t}\Big)^d(1-t^2)^{d+\frac{n-3}{2}}\,, \quad\ t\in [-1, 1]
   \end{equation}
    (see, e.g.,~\cite[Theorem~2.23]{atkinson2012spherical}). \noindent By inspecting  \eqref{rodjac},  \eqref{gegjac},\eqref{Gegenbauerpoly}, and \eqref{rodriguez}, we may relate the polynomials $L_{n,d}^{ \diamond}$ with  the Jacobi polynomials $P_d^{(\frac{n-3}{2}, \frac{n-3}{2})}$ as well as the
        Gegenbauer polynomials $C_d^{(\frac{n}{2}-1)}$.

\begin{lemma} \label{lemm: identity Gegenbauer} Given $n>2$ and $d \in \mathbb{N}$, for all $t \in [-1,1]$ 
\begin{align*}
L_{n,d}^{ \diamond}(t) & = \frac{d! \; (n-3)!}{(d+n-3)!} C^{(\frac{n}{2}-1)}_d(t)
\\
& = \frac{d! \; (n-3)!}{(d+n-3)!} \,
\frac{\sqrt{\pi} 2^{-\frac{n-3}{2}} \sqrt{(d+n-3)!}}{\sqrt{d! \; (d+\frac{n}{2} - 1) } \; \Gamma(\frac{n}{2}-1)}
\widehat{C}^{(\frac{n}{2}-1)}_d(t)
\\
& = \frac{d! \; \Gamma(\frac{n-1}{2})}{\Gamma(\frac{n-1}{2}+d)} \,
P_d^{(\frac{n-3}{2}, \frac{n-3}{2})}(t),
\end{align*}
here $\widehat{C}^{(\frac{n}{2}-1)}_{d} \colon [-1,1] \to \mathbb{R}$ stands for the normalized Gegenbauer polynomial, that is, its $L_2$-norm on the interval $[-1,1]$ with respect to the weight function $(1 - t^2)^{(\frac{n}{2}-1)–\frac{1}{2}} = (1 - t^2)^{\frac{n-3}{2}}$ is one.
\end{lemma}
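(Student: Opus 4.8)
The plan is to treat all three displayed identities as consequences of a single observation: the Legendre polynomial $L_{n,d}^{\diamond}$ and the Gegenbauer polynomial $C_d^{(\frac{n}{2}-1)}$ are built from literally the same Rodrigues differential expression, so they can differ only by a multiplicative constant. First I would set $\lambda = \frac{n}{2}-1$ in the Gegenbauer Rodrigues formula \eqref{RodriguezGegenbauer} and record that $\lambda - \frac{1}{2} = \frac{n-3}{2}$ and $-\lambda + \frac{1}{2} = -\frac{n-3}{2}$, so that the weight factor $(1-t^2)^{-\frac{n-3}{2}}$ and the differentiated term $(1-t^2)^{d+\frac{n-3}{2}}$ coincide exactly with those appearing in the Rodrigues formula \eqref{rodriguez} for $L_{n,d}^{\diamond}$. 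Consequently the quotient $L_{n,d}^{\diamond}(t)/C_d^{(\frac{n}{2}-1)}(t)$ equals the ratio of the two scalar prefactors, and a short Gamma-function simplification -- cancelling the common factor $(-1)^d 2^{-d}\,\Gamma(\frac{n-1}{2})/\Gamma(d+\frac{n-1}{2})$ and using $\Gamma(n-2) = (n-3)!$ and $\Gamma(d+n-2) = (d+n-3)!$ -- collapses this ratio to $\frac{d!\,(n-3)!}{(d+n-3)!}$, which is the first equality.

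For the second equality I would observe that the prefactor multiplying $\widehat{C}_d^{(\frac{n}{2}-1)}$ is precisely the $L_2$-norm of $C_d^{(\frac{n}{2}-1)}$ with respect to the weight $(1-t^2)^{\frac{n-3}{2}}$. Indeed, specializing the orthogonality relation \eqref{Gegenbauerpoly} to $\alpha = \frac{n}{2}-1$ (so that $2\alpha = n-2$, $d+2\alpha = d+n-2$, and $\alpha - \frac{1}{2} = \frac{n-3}{2}$) produces the norm $\|C_d^{(\frac{n}{2}-1)}\|_{L_2(w)}$ exactly as $\frac{\sqrt{\pi}\,2^{-\frac{n-3}{2}}\sqrt{(d+n-3)!}}{\sqrt{d!\,(d+\frac{n}{2}-1)}\,\Gamma(\frac{n}{2}-1)}$. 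Since by definition $C_d^{(\frac{n}{2}-1)} = \|C_d^{(\frac{n}{2}-1)}\|_{L_2(w)}\,\widehat{C}_d^{(\frac{n}{2}-1)}$, substituting this into the first equality yields the second line verbatim.

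Finally, the third equality is obtained by rewriting the Gegenbauer polynomial as a Jacobi polynomial: setting $\lambda = \frac{n}{2}-1$ in the conversion formula \eqref{gegjac} gives $C_d^{(\frac{n}{2}-1)} = \frac{\Gamma(d+n-2)\,\Gamma(\frac{n-1}{2})}{\Gamma(n-2)\,\Gamma(d+\frac{n-1}{2})}\,P_d^{(\frac{n-3}{2},\frac{n-3}{2})}$, and inserting this into the first equality and cancelling $(n-3)!/\Gamma(n-2) = 1$ against $\Gamma(d+n-2)/(d+n-3)! = 1$ leaves the asserted $L_{n,d}^{\diamond} = \frac{d!\,\Gamma(\frac{n-1}{2})}{\Gamma(\frac{n-1}{2}+d)}\,P_d^{(\frac{n-3}{2},\frac{n-3}{2})}$. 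I do not expect any conceptual difficulty; the whole argument is a matching of Rodrigues expressions followed by Gamma-function bookkeeping. The one point demanding care -- and the likely source of any slip -- is the precise form of the normalizing constants in the quoted formulas \eqref{RodriguezGegenbauer} and \eqref{Gegenbauerpoly}, since the standard Gegenbauer normalizations involve a denominator $\Gamma(d+\lambda+\frac{1}{2})$ and a factor $2^{1-2\alpha}$; I would verify these specializations against a reference so that the half-integer arguments at $\alpha = \frac{n}{2}-1$ reduce correctly (note $2^{1-2\alpha} = 2^{3-n}$, whose square root is the $2^{-\frac{n-3}{2}}$ in the statement) and the three prefactors land exactly as displayed.
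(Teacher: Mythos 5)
Your proposal is correct and follows exactly the route the paper intends: the paper gives no formal proof of this lemma, stating only that it follows ``by inspecting'' the Rodrigues formulas \eqref{rodjac} and \eqref{rodriguez}, the conversion \eqref{gegjac}, and the orthogonality relation \eqref{Gegenbauerpoly}, which is precisely the computation you carry out (identical differential cores, so the polynomials differ only by the ratio of scalar prefactors). Your caution about the normalizing constants is also well placed: as printed, \eqref{RodriguezGegenbauer} contains a typo ($\Gamma(\lambda+n+\tfrac{1}{2})$ should read $\Gamma(\lambda+d+\tfrac{1}{2})$) and \eqref{Gegenbauerpoly} should carry the factor $2^{1-2\alpha}$ rather than $2^{1-\alpha}$; using the standard forms, as you do, the three prefactors land exactly as displayed in the lemma.
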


  We emphasize some particular cases of importance that are known within the literature (see also \cite[Section~2.6]{atkinson2012spherical}): For $n=2$ we  get the $d$-th Chebyshev polynomial
 \begin{equation}\label{d=2}
     L_{2,d}^{ \diamond}(t)   = \cos(d \arccos t)\,,
   \end{equation}
 for $n=3$ the (standard) $d$-th Legendre polynomial
 \begin{equation}\label{d=3}
      L_{3,d}^{ \diamond}(t) =  \frac{1}{2^d d! } \,\,
  \Big(\frac{\partial}{\partial t}\Big)^d (t^2-1)^{d}\,,
   \end{equation}
and for $n=4$ the $d$-th  Chebyshev polynomial of second kind
\begin{equation}\label{d=4}
      L_{4,d}^{ \diamond}(t) =\frac{1}{d+1}\frac{\sin ((d+1) \arccos t)}{\sin (\arccos t)}\,.
   \end{equation}

\subsection{Harmonics}

We start with the calculation of the projection constant of $\mathcal{H}_{ d}(\mathbb{S}^{1})$,
so the dimension $n$ equals $2$.

\begin{proposition} \label{neu}
   For all $d \in \mathbb{N}$
\[
 \boldsymbol{\lambda}\big(\mathcal{H}_{ d}(\mathbb{S}^{1})\big) = \frac{4}{\pi}\,.
\]
\end{proposition}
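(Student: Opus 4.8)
The plan is to specialize the integral representation \eqref{un-oX} of Theorem~\ref{maino} to the case $n=2$ and to exploit the fact that the relevant Legendre polynomial is, by \eqref{d=2}, a Chebyshev polynomial. The whole point of the computation will be that a periodicity cancellation removes the dependence on $d$.

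First I would evaluate the dimension factor $N_{2,d}$. Plugging $n=2$ into \eqref{dimfor} gives
\[
N_{2,d} = \frac{(2d)\,(d-1)!}{d!\,0!} = 2, \quad d\geq 1,
\]
which reflects the familiar fact that $\mathcal{H}_d(\mathbb{S}^1)$ is the two-dimensional span of $\cos(d\theta)$ and $\sin(d\theta)$. Next, using \eqref{d=2}, namely $L_{2,d}^{\diamond}(t)=\cos(d\arccos t)$, formula \eqref{un-oX} reads
\[
\boldsymbol{\lambda}\big(\mathcal{H}_d(\mathbb{S}^1)\big)
= 2\int_{\mathbb{S}^1}\big|\cos(d\arccos \eta_1)\big|\,d\sigma_2(\eta).
\]
Applying the one-dimensional reduction \eqref{umrechnen} with $n=2$, for which the prefactor is $\tfrac{\Gamma(1)}{\sqrt{\pi}\,\Gamma(1/2)}=\tfrac{1}{\pi}$ and the weight is $(1-t^2)^{-1/2}$, turns this into
\[
\boldsymbol{\lambda}\big(\mathcal{H}_d(\mathbb{S}^1)\big)
= \frac{2}{\pi}\int_{-1}^1 \frac{\big|\cos(d\arccos t)\big|}{\sqrt{1-t^2}}\,dt.
\]

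Finally I would substitute $t=\cos\theta$, so that $dt=-\sin\theta\,d\theta$ and $\sqrt{1-t^2}=\sin\theta$ on $[0,\pi]$; the weight cancels and the integral becomes $\frac{2}{\pi}\int_0^\pi |\cos(d\theta)|\,d\theta$. The only step requiring any thought is that $\int_0^\pi |\cos(d\theta)|\,d\theta$ is independent of $d$: the substitution $u=d\theta$ together with the $\pi$-periodicity of $|\cos|$ gives
\[
\int_0^\pi |\cos(d\theta)|\,d\theta
= \frac{1}{d}\int_0^{d\pi}|\cos u|\,du
= \frac{1}{d}\cdot d\int_0^\pi|\cos u|\,du
= 2.
\]
Hence $\boldsymbol{\lambda}\big(\mathcal{H}_d(\mathbb{S}^1)\big)=\frac{2}{\pi}\cdot 2=\frac{4}{\pi}$ for every $d$. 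There is no genuine obstacle here; the entire content is the cancellation of $d$ through periodicity, which is exactly what forces the projection constant to be constant in $d$—in sharp contrast to the higher-dimensional spheres, where no such collapse occurs.
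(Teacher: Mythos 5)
Your proposal is correct and follows essentially the same route as the paper's proof: both reduce \eqref{un-oX} with $N_{2,d}=2$ and $L_{2,d}^{\diamond}(t)=\cos(d\arccos t)$ to $\frac{2}{\pi}\int_0^\pi|\cos(d\theta)|\,d\theta$ and then use the $\pi$-periodicity of $|\cos|$ to remove the dependence on $d$. The only cosmetic difference is that you pass through the weighted integral on $[-1,1]$ via \eqref{umrechnen} and substitute $t=\cos\theta$, whereas the paper parametrizes $\mathbb{S}^1$ by the angle directly; the computation is identical.
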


\begin{proof}
We deduce from  Theorem~\ref{maino} that
\begin{align*}
\boldsymbol{\lambda}\big(\mathcal{H}_{ d}(\mathbb{S}^{1})\big)
&
= 2 \int_{\mathbb{S}^{1}} |L_{2,d}^{ \diamond}  (\langle e_1, \eta \rangle)| d\sigma_2(\eta)
\\&
= \frac{2}{2\pi}  \int_{\mathbb{S}^{1}} |L_{2,d}^{ \diamond}  (\langle e_1, \eta \rangle)|\, ds_2(\eta)
\\&
= \frac{1}{\pi}  \int_{-\pi}^{\pi} |L_{2,d}^{ \diamond}  (\cos t)| dt
=\frac{2}{\pi}  \int_{0}^{\pi} |L_{2,d}^{ \diamond}  (\cos t)| dt\,.
\end{align*}
Then by \eqref{d=2}
\begin{align*}
\int_{0}^{\pi} |L_{2,d}^{ \diamond}(t)  (\cos t)| dt
      &
      =
      \int_{0}^{\pi} \big|\cos \big(d \arccos (\cos t)\big)\big| dt
      \\&
      =\int_{0}^{\pi} |\cos (d t )| dt
      =
       \frac{1}{d} \int_{0}^{d\pi} |\cos t| dt
        =  \int_{0}^{\pi} |\cos t| dt   =  2\,.
    \end{align*}
           So, we get as desired \,
           $
       \boldsymbol{\lambda}\big(\mathcal{H}_{ d}(\mathbb{S}^{1})\big)
     =
     \frac{4}{\pi}$\,.
  \end{proof}

The previous equality is not surprising, since $H_d(\mathbb{S}^1)$ is isometrically isomorphic to
$\ell_1^2(\mathbb{C})$, and the result therefore follows directly from Gr{\"u}nbaum’s classical
integral formula for the projection constant \cite{grunbaum}:
\[
  \bold{\lambda}(\ell_1^2(\mathbb{C})) =\int_{\mathbb{T}^2} \abs{z_1 + z_2}\,dz_1dz_2,
\]
where $\mathbb{T} = \{z\in\mathbb{C} : \abs{z}=1\}$. Indeed, every function
$f \in H_d(\mathbb{S}^1)$ admits a representation
\[
f(\cos(\theta), \sin(\theta)) = a e^{id\theta} + b e^{-id\theta}, \qquad a,b \in \mathbb{C} \mbox{ and } \theta \in [0, 2 \pi).
\]
Thus the map
\[
T : \mathbb{C}^2 \longrightarrow H_d(\mathbb{S}^1), \qquad T(a,b)(\cos(\theta), \sin(\theta))
    = a e^{id\theta} + b e^{-id\theta},
\]
is a complex-linear bijection. For $f = T(a,b)$ and $z = e^{i\theta}$,
\[
f(\cos(\theta), \sin(\theta)) = a z^d + b z^{-d}
          = z^{-d}(a z^{2d} + b),
\]
and since $|z^{-d}| = 1$,
\[
|f(\cos(\theta), \sin(\theta))| = |a z^{2d} + b|.
\]
As $\theta$ varies, the point $w = z^{2d}$ runs over the unit circle, so
\[
\|f\|_\infty = \sup_{|w| = 1} |aw + b|.
\]
Consequently, $\|T(a,b)\|_\infty = |a| + |b|
                  = \|(a,b)\|_{\ell_1^2(\mathbb{C})}$.

\smallskip
Let us turn to the case $n>2$.
\smallskip


  

\begin{theorem} \label{projGegenbauerA}
Let $n\ge 3$ and $d \in \mathbb 
N$, and set 
$\alpha=\frac{n-3}{2}$.
Then the projection constant of the space $H_d(S^{n-1})$ of spherical
harmonics of degree $d$ satisfies
\[
\boldsymbol{\lambda}\big(\mathcal{H}_{d}(\mathbb{S}^{n-1})\big)
=
\frac{(2d+n-2)}{2^{n-2}}\,\frac{\Gamma(d+n-2)}{\Gamma(\frac{n-1}2)\,\Gamma\!\left(d+\frac{n-1}{2}\right)}
\int_{-1}^{1}
\bigl| P_d^{(\alpha,\alpha)}(t)\bigr|\,(1-t^2)^{\alpha}\,dt.
\]

 Alternatively, we have in terms of Gegenbauer polynomials that
  \[
    \boldsymbol{\lambda}\big(\mathcal{H}_{d}(\mathbb{S}^{n-1})\big)  = 
    \frac{2d+n-2}{\,n-2\,}\; 
\int_{\mathbb S^{n-1}}
\bigl|C_d^{(\frac{n-2}{2})}(\eta_1)\bigr|\,d\sigma_n(\eta).
   \]
\end{theorem}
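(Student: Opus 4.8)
The plan is to start from the integral representation \eqref{un-oX} in Theorem~\ref{maino}, namely
\[
\boldsymbol{\lambda}\big(\mathcal{H}_{d}(\mathbb{S}^{n-1})\big) = N_{n,d}\int_{\mathbb{S}^{n-1}}\bigl|L_{n,d}^{\diamond}(\eta_1)\bigr|\,d\sigma_n(\eta),
\]
and to replace $L_{n,d}^{\diamond}$ by its expression in classical orthogonal polynomials provided by Lemma~\ref{lemm: identity Gegenbauer}. The two displayed formulas correspond precisely to the two identities in that lemma, so the entire argument reduces to a substitution followed by simplification of a product of Gamma factors.

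For the Gegenbauer formula I would proceed directly, without passing to a one-dimensional integral. Using $L_{n,d}^{\diamond}(t) = \frac{d!\,(n-3)!}{(d+n-3)!}\,C_d^{(\frac{n}{2}-1)}(t)$, the positive constant $\frac{d!\,(n-3)!}{(d+n-3)!}$ pulls out of the absolute value, giving
\[
\boldsymbol{\lambda}\big(\mathcal{H}_{d}(\mathbb{S}^{n-1})\big) = N_{n,d}\,\frac{d!\,(n-3)!}{(d+n-3)!}\int_{\mathbb{S}^{n-1}}\bigl|C_d^{(\frac{n}{2}-1)}(\eta_1)\bigr|\,d\sigma_n(\eta).
\]
Inserting $N_{n,d} = \frac{(n+2d-2)(n+d-3)!}{d!\,(n-2)!}$ from \eqref{dimfor}, the factors $(n+d-3)!$ and $d!$ cancel, and writing $(n-2)! = (n-2)(n-3)!$ collapses the prefactor to $\frac{2d+n-2}{n-2}$, which is the claimed constant (after noting $\frac{n}{2}-1 = \frac{n-2}{2}$).

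For the Jacobi formula I would first convert the spherical integral into a weighted one-dimensional integral via \eqref{umrechnen}, which rewrites $\int_{\mathbb{S}^{n-1}}f(\eta_1)\,d\sigma_n(\eta)$ as $\frac{\Gamma(n/2)}{\sqrt{\pi}\,\Gamma(\frac{n-1}{2})}\int_{-1}^1 f(t)(1-t^2)^{\alpha}\,dt$ with $\alpha = \frac{n-3}{2}$; note the weight there is exactly $(1-t^2)^{\alpha}$. Substituting the identity $L_{n,d}^{\diamond}(t) = \frac{d!\,\Gamma(\frac{n-1}{2})}{\Gamma(\frac{n-1}{2}+d)}\,P_d^{(\alpha,\alpha)}(t)$ then isolates the integral $\int_{-1}^1 |P_d^{(\alpha,\alpha)}(t)|(1-t^2)^{\alpha}\,dt$ times the scalar
\[
N_{n,d}\cdot\frac{\Gamma(n/2)}{\sqrt{\pi}\,\Gamma(\frac{n-1}{2})}\cdot\frac{d!\,\Gamma(\frac{n-1}{2})}{\Gamma(\frac{n-1}{2}+d)}.
\]

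The main (and essentially only) obstacle is verifying that this scalar equals the stated coefficient $\frac{2d+n-2}{2^{n-2}}\cdot\frac{\Gamma(d+n-2)}{\Gamma(\frac{n-1}{2})\,\Gamma(d+\frac{n-1}{2})}$. After cancelling $d!$ and the two copies of $\Gamma(\frac{n-1}{2})$ and expressing $N_{n,d}$ through Gamma functions, the verification reduces to the single identity $\frac{\Gamma(n/2)}{\Gamma(n-1)\sqrt{\pi}} = \frac{2^{2-n}}{\Gamma(\frac{n-1}{2})}$, which is exactly the Legendre duplication formula $\Gamma(z)\Gamma(z+\tfrac12)=2^{1-2z}\sqrt{\pi}\,\Gamma(2z)$ evaluated at $z=\frac{n-1}{2}$. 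With that substitution the factor $2^{n-2}$ and the remaining Gamma ratio fall into place. I expect no analytic difficulty here, only careful Gamma-function bookkeeping; the one identity to keep in mind throughout is the duplication formula.
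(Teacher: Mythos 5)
Your proposal is correct and follows essentially the same route as the paper's proof: both start from Theorem~\ref{maino} and Lemma~\ref{lemm: identity Gegenbauer}, pass to a one-dimensional integral via \eqref{umrechnen}, substitute $N_{n,d}$ from \eqref{dimfor}, and settle the Gamma-factor bookkeeping with the Legendre duplication formula at $x=\frac{n-1}{2}$. The only cosmetic difference is that you obtain the Gegenbauer form directly from the first identity of the lemma (so the prefactor collapses to $\frac{2d+n-2}{n-2}$ without leaving the sphere), whereas the paper derives it from its intermediate Jacobi expression; the underlying computation is identical.
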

\begin{proof}
    From Theorem~\ref{maino} and Lemma~\ref{lemm: identity Gegenbauer} we have
    \begin{align}
\label{eq: eq1 proof proj constant H_d}\boldsymbol{\lambda}\big(\mathcal{H}_{d}(\mathbb{S}^{n-1})\big) & =
N_{n,d}
\frac{d!\,\Gamma(\tfrac{n-1}{2})}
{\Gamma(d+\tfrac{n-1}{2})}
\int_{\mathbb S^{n-1}}
\big|P_d^{(\frac{n-3}{2},\frac{n-3}{2})}(\eta_1)\big|\,
d\sigma_n(\eta) \\
\nonumber &=N_{n,d}
\frac{d!\,\Gamma(\tfrac{n-1}{2})}
{\Gamma(d+\tfrac{n-1}{2})}
\frac{\Gamma(\tfrac n2)}{\sqrt{\pi}\,\Gamma(\tfrac{n-1}{2})}
\int_{-1}^{1} |P_d^{(\frac{n-3}{2},\frac{n-3}{2})}(t)|\,(1-t^2)^\frac{n-3}{2} dt,        
    \end{align}
where the last equality follows from the spherical reduction identity \eqref{umrechnen}.
Cancelling the factor \(\Gamma(\tfrac{n-1}{2})\) and  substituting \(N_{n,d}\) from \eqref{dimfor} yields
\begin{align*}
\boldsymbol{\lambda}\big(\mathcal{H}_{d}(\mathbb{S}^{n-1})\big) &=
\frac{(2d+n-2)\,(d+n-3)!}{d!\,(n-2)!}
\frac{d!\,\Gamma(\tfrac n2)}
{\sqrt{\pi}\,\Gamma(d+\tfrac{n-1}{2})}
\int_{-1}^{1}
\big|P_d^{(\frac{n-3}{2},\frac{n-3}{2})}(t)\big|(1-t^2)^\frac{n-3}{2} dt\\
&=
\frac{(2d+n-2)\,\Gamma(d+n-2)\,\Gamma(\tfrac n2)}
{\Gamma(n-1)\sqrt\pi\,\Gamma(d+\tfrac{n-1}{2})}
\int_{-1}^{1}
\big|P_d^{(\frac{n-3}{2},\frac{n-3}{2})}(t)\big|\,(1-t^2)^{\frac{n-3}{2}} dt.    
\end{align*}
An application of the Legendre duplication formula 
\begin{equation} \label{Legendre duplication formula}
    \Gamma(x)\, \Gamma\Big(x+ \frac{1}{2}\Big) = 2^{1-2x}\, \sqrt{\pi}\Gamma(2x)\,
\end{equation} 
{ for $x = \frac{n-1}{2}$,}
proves our first claim. The second statement follows from \eqref{eq: eq1 proof proj constant H_d} together with the relation between Gegenbauer and Jacobi polynomials from Lemma \ref{lemm: identity Gegenbauer}.
\end{proof}

\smallskip

For fixed dimension $n$, we now show the asymptotic order of $\boldsymbol{\lambda}\big(\mathcal{H}_{d}(\mathbb{S}^{n-1})\big)$ as the homogeneity degree $d$ goes to infinity. As the following theorem shows, this projection constant behaves as the square root of the dimension of $\mathcal{H}_{d}(\mathbb{S}^{n-1})$, meeting the Kadet-Snobar estimate. Recall that after fixing $n$ we for large $d$ have
\[
\dim \mathcal{H}_{d}(\mathbb{S}^{n-1}) \,\sim_{c(n)} \, d^{n-2}\,.
\]
More precisely, 
\begin{equation} \label{asymp dimension ratio}
\lim_{d \to \infty} \frac{\dim \mathcal{H}_{d}(\mathbb{S}^{n-1}) }{d^{n-2}} = \frac{2}{(n-2)!}\,.
\end{equation}
Indeed, the previous limit can be deduced from the fact that by Equation~\eqref{dimfor} the dimension is exactly
$$
\dim \mathcal{H}_{d}(\mathbb{S}^{n-1}) = \frac{(2d+n-2) \Gamma(d+n-2)}{\Gamma(d+1) (n-2)!}\,,
$$ 
combined with the following well-known formula for the asymptotic of ratios of Gamma functions: Given fixed numbers $a,b >0$,
\begin{align}\label{mainasym}
\lim_{x\to \infty} \frac{\Gamma(x + a)}{ \Gamma(x + b) x^{a-b}} =1\,. 
\end{align}

\begin{theorem} \label{thm: asymptotic order real homogeneous harmonics} For $n>2$
   \begin{equation} \label{Eq: limit projection harmonic A}
   \lim_{d \to \infty} \frac{\boldsymbol{\lambda}\big(\mathcal{H}_{d}(\mathbb{S}^{n-1})\big)}{\sqrt{\dim \mathcal{H}_{d}(\mathbb{S}^{n-1})}} \,\, = \,\,\frac{  2^{n-\frac{1}{2}}  \; \; }{ \sqrt{(n-2)!} \;\; } 
\frac{\Gamma\big(\frac{n}{4}\big)^2}{\pi^{2}}.
   \end{equation}
    In particular,
   \begin{equation} \label{Eq: limit projection harmonic B}
   \lim_{d \to \infty} \frac{\boldsymbol{\lambda}\big(\mathcal{H}_{d}(\mathbb{S}^{n-1})\big)}{d^{\frac{n-2}{2}}} \,\, = \,\,\frac{  2^{n}  \; \; }{ {(n-2)!} \;\; } 
\frac{\Gamma\big(\frac{n}{4}\big)^2}{\pi^{2}}.
   \end{equation}
\end{theorem}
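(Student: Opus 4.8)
The plan is to reduce everything to the asymptotics of the weighted $L_1$-norm of the Jacobi polynomial $P_d^{(\alpha,\alpha)}$ with $\alpha=\frac{n-3}{2}$, and then assemble the Gamma-function prefactors. Starting from the first formula in Theorem~\ref{projGegenbauerA},
\[
\boldsymbol{\lambda}\big(\mathcal{H}_{d}(\mathbb{S}^{n-1})\big)
=
\frac{(2d+n-2)}{2^{n-2}}\,\frac{\Gamma(d+n-2)}{\Gamma(\frac{n-1}2)\,\Gamma\!\left(d+\frac{n-1}{2}\right)}
\int_{-1}^{1}\bigl| P_d^{(\alpha,\alpha)}(t)\bigr|\,(1-t^2)^{\alpha}\,dt,
\]
I would first isolate the integral $I_d:=\int_{-1}^{1}| P_d^{(\alpha,\alpha)}(t)|(1-t^2)^{\alpha}\,dt$ and determine its precise asymptotic behavior as $d\to\infty$. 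The standard tool here is the classical asymptotic formula (Darboux/Szeg\H{o}) for Jacobi polynomials in the oscillatory regime $t=\cos\theta$ with $\theta$ away from the endpoints, namely
\[
P_d^{(\alpha,\alpha)}(\cos\theta)\sim \frac{\cos\!\big((d+\alpha+\tfrac12)\theta-(\alpha+\tfrac12)\tfrac{\pi}{2}\big)}{\sqrt{\pi d}\,\big(\sin\tfrac{\theta}{2}\big)^{\alpha+\frac12}\big(\cos\tfrac{\theta}{2}\big)^{\alpha+\frac12}}\cdot d^{\alpha+\frac12}\cdot(\text{normalizing constants}),
\]
so that after multiplying by the weight and changing variables $t=\cos\theta$, the integrand becomes (to leading order) a rapidly oscillating cosine times a smooth envelope. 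The key averaging step is that $\frac{1}{\pi}\int_0^\pi|\cos u|\,du=\frac{2}{\pi}$, which replaces the absolute value of the oscillating factor by its mean; this is exactly the mechanism already visible in the $n=2$ computation in Proposition~\ref{neu}.

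Concretely, I expect $I_d$ to decay like a negative power of $d$ times an explicit constant built from a Beta-type integral over $\theta$. The cleanest route is to track the normalized Gegenbauer polynomial $\widehat C_d^{(n/2-1)}$ from Lemma~\ref{lemm: identity Gegenbauer}, whose $L_2(w)$-norm is $1$ by construction; then $\int|\widehat C_d|\,w\,dt\to$ a constant obtained by the oscillation/averaging argument, and one recovers $\int|P_d^{(\alpha,\alpha)}|\,w\,dt$ by reinserting the explicit scalar from that lemma. Either way, I would extract the leading constant, which after the averaging collapses to the $\frac{2}{\pi}$ factor times the value of $\int_0^\pi(\sin\tfrac{\theta}{2}\cos\tfrac{\theta}{2})^{\,0}\,d\theta$-type integral; the appearance of $\Gamma(n/4)^2$ strongly suggests that the relevant envelope integral is a Beta integral evaluating to a ratio of Gamma functions at argument $n/4$. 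I would then feed the resulting asymptotic of $I_d$ back into the displayed formula, combine it with the asymptotics of the Gamma-function prefactor via the ratio formula~\eqref{mainasym}, and simplify using the Legendre duplication formula~\eqref{Legendre duplication formula} to obtain~\eqref{Eq: limit projection harmonic B}. Finally, dividing by $\sqrt{\dim\mathcal H_d(\mathbb S^{n-1})}$ using~\eqref{asymp dimension ratio} and the relation $d^{\frac{n-2}{2}}\sim\sqrt{\dim\mathcal H_d}\cdot\sqrt{(n-2)!/2}$ converts~\eqref{Eq: limit projection harmonic B} into~\eqref{Eq: limit projection harmonic A}; this last passage is purely algebraic.

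The main obstacle will be making the oscillation/averaging argument rigorous rather than heuristic: I must justify replacing $|P_d^{(\alpha,\alpha)}(\cos\theta)|$ by the mean of its absolute leading term uniformly on compact subsets of $(0,\pi)$, and separately control the contributions near the endpoints $\theta=0,\pi$ where the Darboux asymptotics degenerate. The endpoint regions are handled by the Mehler--Heine asymptotics (Bessel-function behavior near $\pm1$) together with the weight $(1-t^2)^{\alpha}$, and one checks these regions contribute a lower-order term to $I_d$; the bulk region is handled by a Riemann--Lebesgue-type averaging of $|\cos(\cdot)|$ against the slowly varying envelope. A careful but standard way to package all of this is to cite or adapt the known asymptotics for $\|P_d^{(\alpha,\beta)}\|_{L_1(w)}$ from the orthogonal-polynomials literature; granting that input, the remaining work is the bookkeeping of constants, which I expect to be routine though somewhat lengthy.
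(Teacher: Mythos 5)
Your proposal is correct and follows essentially the same route as the paper: reduce to the weighted $L_1$-norm of the $L_2$-normalized Gegenbauer polynomial $\widehat{C}_d^{(\frac{n}{2}-1)}$ via Theorem~\ref{projGegenbauerA} and Lemma~\ref{lemm: identity Gegenbauer}, invoke the known asymptotics for that norm (the paper cites statement~2 of Theorem~1 in \cite{aptekarev1995asymptotic}, which is precisely the rigorous oscillation/averaging result you describe), evaluate the resulting envelope integral as a Beta function at $(\frac{n}{4},\frac{n}{4})$, and finish with the duplication formula~\eqref{Legendre duplication formula} and the dimension asymptotics~\eqref{asymp dimension ratio}. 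The only cosmetic difference is the order: the paper proves~\eqref{Eq: limit projection harmonic A} first (exploiting that the normalized-Gegenbauer prefactor is exactly $d$-independent, so no Gamma-ratio limit is needed there) and then deduces~\eqref{Eq: limit projection harmonic B}, whereas you derive~\eqref{Eq: limit projection harmonic B} first and convert; both orderings are equivalent.
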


\begin{proof} By Theorem \ref{projGegenbauerA} and Lemma \ref{lemm: identity Gegenbauer} we have from the formula \eqref{dimfor}
\begin{align*}
&
\frac{\boldsymbol{\lambda}\big(\mathcal{H}_{d}(\mathbb{S}^{n-1})\big)}{\sqrt{\mbox{dim}\big(\mathcal{H}_{d}(\mathbb{S}^{n-1})\big)}} 
\\
&
= \sqrt{\frac{d!(n-2)!}{(2d+n-2)(d+n-3)!}} \,\frac{2d+n-2}{(n-2)}  \;\; \frac{\sqrt{\pi} 2^{-\frac{n-3}{2}} \sqrt{(d+n-3)!}}{\sqrt{d! } \; \sqrt{d+\frac{n}{2}-1} \, \Gamma(\frac{n}{2}-1) } \int_{\mathbb{S}^{n-1}} \big | \widehat{C}^{(\frac{n}{2}-1)}_d(\eta_1) \big|\; d\sigma_n(\eta) 
\\
& = \frac{\sqrt{\pi}  \;\;2^{-\frac{n-4}{2}} \;\; (n-3)!}{ \sqrt{(n-2)!} \;\; \Gamma(\frac{n}{2}-1)} \int_{\mathbb{S}^{n-1}}
\big|\widehat{C}^{(\frac{n}{2}-1)}_d(\eta_1)\big|\,d\sigma_n(\eta)\,,
\end{align*}
and  consequently it follows from Equation~\eqref{umrechnen} that
\begin{align*}
\frac{\boldsymbol{\lambda}\big(\mathcal{H}_{d}(\mathbb{S}^{n-1})\big)}{\sqrt{\mbox{dim}\big(\mathcal{H}_{d}(\mathbb{S}^{n-1})\big)}} & =  \frac{  2^{-\frac{n-4}{2}} \;\: (n-3)! \; \; \Gamma(\frac{n}{2})}{ \sqrt{(n-2)!} \;\; \Gamma(\frac{n}{2}-1) \; \; \Gamma(\frac{n-1}{2})} \int_{-1}^1
\big|\widehat{C}^{(\frac{n}{2}-1)}_d(t)\big| \; (1-t^2)^{\frac{n-3}{2}} dt\,.
\end{align*}
From  statement 2) of \cite[Theorem 1]{aptekarev1995asymptotic} we know that
\[
\lim_{d\to \infty} \; \; \int_{-1}^1
|\hat{C}^{(\frac{n}{2}-1)}_d(t)| \; (1-t^2)^{\frac{n-3}{2}} dt
=\left(\frac{2}{\pi}\right)^{3/2} \int_0^\pi (1-\cos^2 \theta)^{\frac{n-2}{4}} \;d\theta \,
= \frac{2^{\frac{n+1}{2}}}{\pi^{3/2}}\int_0^\pi \sin^{\frac{n-2}{2}}\Big(\frac{\theta}{2}\Big)\cos^{\frac{n-2}{2}} \Big(\frac{\theta}{2}\Big) \;d\theta
.\]
The last integral is  the Beta function evaluated at $\big(\frac{n}{4},\frac{n}{4}\big)$, thus
\[
\lim_{d\to \infty} \; \; \int_{-1}^1
|\hat{C}^{(\frac{n}{2}-1)}_d(t)| \; (1-t^2)^{\frac{n-3}{2}} dt
= \frac{2^\frac{n+1}{2}}{\pi^{3/2}}\frac{\Gamma\big(\frac{n}{4}\big)^2}{\Gamma\big(\frac{n}{2}\big)}.
\]

This shows that
\begin{align*}
\lim_{d \to \infty} \frac{\boldsymbol{\lambda}\big(\mathcal{H}_{d}(\mathbb{S}^{n-1})\big)}{\sqrt{\mbox{dim}\big(\mathcal{H}_{d}(\mathbb{S}^{n-1})\big)}}  & = \frac{  2^{-\frac{n-4}{2}} \;\: (n-3)! \; \; \Gamma(\frac{n}{2})}{ \sqrt{(n-2)!} \;\; \Gamma(\frac{n}{2}-1) \; \; \Gamma(\frac{n-1}{2})} \;\; 
\frac{2^\frac{n+1}{2}}{\pi^{3/2}}\frac{\Gamma\big(\frac{n}{4}\big)^2}{\Gamma\big(\frac{n}{2}\big)} \\
& = \frac{  2^{\frac{5}{2}} \;\: (n-3)! \; \; }{ \sqrt{(n-2)!} \;\; \Gamma(\frac{n}{2}-1) \; \; \Gamma(\frac{n-1}{2})} \;\; 
\frac{\Gamma\big(\frac{n}{4}\big)^2}{\pi^{3/2}}\\
& = \frac{  2^{n-\frac{1}{2}}  \; \; }{ \sqrt{(n-2)!} \;\; } 
\frac{\Gamma\big(\frac{n}{4}\big)^2}{\pi^{2}},
\end{align*}
where for the last equality we applied the Legendre duplication formula \eqref{Legendre duplication formula}
with $x=\frac{n}{2}-1,$
which is exactly \eqref{Eq: limit projection harmonic A}. On the other hand, \eqref{Eq: limit projection harmonic B} follows from the previous limit and Equation~\eqref{asymp dimension ratio}.
\end{proof}

\subsection{Homogeneous polynomials}

As before in Proposition~\ref{neu} we start with the $2$-dimensional case.

\begin{proposition} \label{prop : 2 dimensional homogeneous}
For all $d \in \mathbb{N}$
\[
 \boldsymbol{\lambda}\big(\mathcal{P}_{ d}(\mathbb{S}^{1})\big) =
\frac{1}{2\pi} \int_{0}^{2\pi}\bigg| \frac{\sin\big(\frac{d+1}{2}t \big)}{\sin\big(\frac{t}{2} \big)}\bigg|\,dt\,.
\]
In particular,
\[
\lim_{d \to \infty}  \frac{\boldsymbol{\lambda}\big(\mathcal{P}_{ d}(\mathbb{S}^{1})\big)}{\log d} \,=\, \frac{4}{\pi^2}\,.
\]
\end{proposition}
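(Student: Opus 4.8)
The plan is to prove the exact integral formula first, and then extract the logarithmic asymptotic from it.

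\textbf{Step 1: The integral formula.} I would begin by specializing Theorem~\ref{maino}, Equation~\eqref{due}, to the case $n=2$. The reduction identity \eqref{umrechnen} converts the spherical integral into a one-dimensional integral, but here I find it cleaner to parametrize $\mathbb{S}^1$ directly by the angle, exactly as in the proof of Proposition~\ref{neu}: for $\eta = (\cos t, \sin t)$ we have $\eta_1 = \cos t$, and $d\sigma_2(\eta) = \tfrac{1}{2\pi}\,dt$ on $[0,2\pi)$. Using \eqref{d=2}, we have $L_{2,k}^{\diamond}(\cos t) = \cos(kt)$, and from \eqref{dimfor} with $n=2$ one computes $N_{2,k} = 2$ for $k\geq 1$ while $N_{2,0}=1$. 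Therefore the summand inside the absolute value in \eqref{due} becomes
\[
\sum_{j=0}^{\lfloor d/2\rfloor} N_{2,d-2j}\, L_{2,d-2j}^{\diamond}(\cos t)
= \sum_{j=0}^{\lfloor d/2\rfloor} N_{2,d-2j}\cos\big((d-2j)t\big).
\]
The key identity I would establish is that this trigonometric sum equals the Dirichlet-type kernel $\dfrac{\sin\big(\frac{d+1}{2}t\big)}{\sin\big(\frac{t}{2}\big)}$. This is a telescoping/summation computation: writing $\cos(m t)$ in terms of $e^{imt}$, the sum over the indices $m = d, d-2, d-4, \ldots$ with the weight pattern $2,2,\ldots,2,(1\text{ or }2)$ should collapse after multiplying and dividing by $\sin(t/2)$, using the product-to-sum formula $2\sin(t/2)\cos(mt) = \sin\big((m+\tfrac12)t\big) - \sin\big((m-\tfrac12)t\big)$. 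One must treat the parity of $d$ carefully (whether the lowest term is the constant $N_{2,0}=1$ or a degree-one term with $N_{2,1}=2$), but in both cases the telescoping yields the same closed form. Plugging this back gives exactly the claimed formula.

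\textbf{Step 2: The asymptotics.} With the formula in hand, the asymptotic statement reduces to the classical fact that the $L_1$-norm of the Dirichlet kernel grows like $\log$. Concretely, writing $m = \frac{d+1}{2}$, I would analyze
\[
\frac{1}{2\pi}\int_0^{2\pi}\bigg|\frac{\sin\big(\frac{d+1}{2}t\big)}{\sin\big(\frac{t}{2}\big)}\bigg|\,dt.
\]
The standard approach is to replace $\sin(t/2)$ by $t/2$ near the origin (the error is bounded and contributes only $O(1)$), reducing the problem to estimating $\int_0^{\pi} \big|\sin(m s)\big|/s \, ds$ after the substitution $s = t/2$. Splitting the range into the intervals where $ms \in [k\pi,(k+1)\pi]$ and summing the harmonic-like contributions $\sim \tfrac{1}{\pi}\int_0^\pi|\sin u|\,du \cdot \tfrac1k$ produces the logarithm, with the leading constant coming from $\frac{2}{\pi}\int_0^\pi|\sin u|\,du = \frac{4}{\pi}$ together with the normalization. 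Tracking the constants, the divisions by $2\pi$ and the two factors of $2$ from the substitution and from $\frac{d+1}{2}\sim \frac d2$, one obtains the stated limit $\frac{4}{\pi^2}$.

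\textbf{Main obstacle.} The genuinely delicate part is the bookkeeping of constants in Step 2, since the classical Lebesgue-constant asymptotic $\|D_N\|_1 \sim \frac{4}{\pi^2}\log N$ must be matched exactly to the normalization $\frac{1}{2\pi}$ and the index shift $d+1$ versus $d$ inside the logarithm (this shift is harmless since $\log(d+1)\sim\log d$). Rather than reprove the Lebesgue-constant asymptotic from scratch, I would prefer to cite the well-known value and simply verify that the prefactors align; the replacement of $\sin(t/2)$ by $t/2$ and the control of the resulting bounded error term is the only analytic estimate requiring care. Step 1, by contrast, is a finite algebraic identity and should present no conceptual difficulty beyond the parity case-split.
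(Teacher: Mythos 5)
Your overall route coincides with the paper's (specialize Theorem~\ref{maino} to $n=2$, convert the Legendre polynomials into cosines via \eqref{d=2}, identify a Dirichlet-type kernel, then invoke the classical Lebesgue-constant asymptotics), but the key identity you assert in Step 1 is false, and the mechanism you propose to prove it fails. The frequencies in the sum
\[
\sum_{j=0}^{\lfloor d/2\rfloor} N_{2,d-2j}\cos\big((d-2j)t\big)
\]
are $d, d-2, d-4,\dots$, stepping by $2$, so multiplying by $2\sin(t/2)$ does \emph{not} telescope: the differences $\sin\big((m+\tfrac12)t\big)-\sin\big((m-\tfrac12)t\big)$ coming from consecutive frequencies $m$ and $m+2$ do not cancel. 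The multiplier that does telescope is $2\sin t$, and the resulting closed form (in both parities) is
\[
\sum_{j=0}^{\lfloor d/2\rfloor} N_{2,d-2j}\cos\big((d-2j)t\big)=\frac{\sin\big((d+1)t\big)}{\sin t}\,,
\]
not $\sin\big(\tfrac{d+1}{2}t\big)\big/\sin\big(\tfrac t2\big)$. Indeed your identity fails pointwise: at $d=2$, $t=\pi$ it would read $1+2\cos(2\pi)=3$ on the left and $\sin(3\pi/2)/\sin(\pi/2)=-1$ on the right, and the absolute values differ as well, so "plugging this back" does not give the claimed formula.

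The gap is repairable, and the repair is essentially what the paper does. From the correct closed form, substitute $s=2t$, so the integral becomes $\tfrac{1}{4\pi}\int_0^{4\pi}\big|\sin(\tfrac{d+1}{2}s)/\sin(\tfrac s2)\big|\,ds$, and then use that this integrand is $2\pi$-periodic in absolute value (under $s\mapsto s+2\pi$ the numerator picks up $(-1)^{d+1}$ and the denominator $-1$) to reduce to $\tfrac{1}{2\pi}\int_0^{2\pi}$, which is the stated formula. The paper organizes the same computation in the reverse order: it splits $[0,2\pi]$ into $[0,\pi]\cup[\pi,2\pi]$, substitutes $s=2t$ on each piece, and only then applies the Dirichlet-kernel identity $1+2\sum_{\ell=1}^{m}\cos(\ell s)=\sin\big((m+\tfrac12)s\big)/\sin(\tfrac s2)$ (whose frequencies step by $1$), together with its odd-degree analogue $\sum_{\ell=0}^{\lfloor d/2\rfloor}\cos\big((2\ell+1)\tfrac s2\big)=\tfrac12\,\sin\big(\tfrac{d+1}{2}s\big)/\sin(\tfrac s2)$. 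Your Step 2 is sound: once the formula is in Dirichlet-kernel form, the limit $\tfrac{4}{\pi^2}$ follows from the classical Lebesgue-constant asymptotics (the index shift $\tfrac{d+1}{2}$ versus $d$ only perturbs $\log d$ by an additive constant); the paper does not reprove this but cites Zygmund, which is the cleaner choice.
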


It should be pointed out that in the case of the complex Hilbert space $\ell_2^2(\mathbb{C})$, the projection constant $\boldsymbol{\lambda}\big(\mathcal{P}_{d}(\ell_2^2(\mathbb{C})\big)$ is bounded by $2$ for any degree $d$ (see again~\eqref{surprise} and also~\eqref{surpriseB}). However, the previous result shows that for the real case we have a~significant difference. In fact, the projection constant of $\mathcal{P}_{d}(\ell_2^2(\mathbb{R}))=\mathcal{P}_{d}(\mathbb{S}^{1})$ has a~logarithmic increase with respect to $d$.

\begin{proof}
We follow the ideas of the proof of  Propositions~\ref{neu}.
    Suppose first that $d$ is even.
    Note that $N_{2,0}=1$ and that, on the other hand, by Equation \eqref{dimfor} we have $N_{2,\ell}=2$ for every $\ell\geq 1$.
Then we conclude from Theorem~\ref{maino} that
\begin{align*}
    \boldsymbol{\lambda}\big(\mathcal{P}_{ d}(\mathbb{S}^{1})\big)
    &
    =  \int_{\mathbb{S}^{1}} \big|1+ 2 \sum_{\ell=1}^{\frac{d}{2}} L_{2,2\ell}^{\diamond}(\langle e_1, \eta \rangle)\big| d\sigma_2(\eta)
    \\&
    =\frac{1}{2\pi}\int_{\mathbb{S}^{1}} \big|1+ 2 \sum_{\ell=1}^{\frac{d}{2}} L_{2,2\ell}^{\diamond}(\langle e_1, \eta \rangle)\big| d\lambda_2(\eta)
    \\&
        =\frac{1}{2\pi}\int_{0}^{2\pi} \big|1+ 2 \sum_{\ell=1}^{\frac{d}{2}} L_{2,2\ell}^{\diamond}(\cos t)\big| dt
                \\&
        =\frac{1}{2\pi}\Big(\int_{0}^{\pi} \big|1+ 2 \sum_{\ell=1}^{\frac{d}{2}} L_{2,2\ell}^{\diamond}(\cos t)\big| dt
        +
        \int_{\pi}^{2\pi} \big|1+ 2 \sum_{\ell=1}^{\frac{d}{2}} L_{2,2\ell}^{\diamond}(\cos t)\big| dt\Big)\,.
            \end{align*}
By \eqref{d=2} we know that $L_{2,2\ell}^{ \diamond}$ is the  $d$-th Chebyshev polynomial, that is,
 \begin{equation*}
     L_{2,2\ell}^{ \diamond}(t)   = \cos(2\ell \arccos t)\,,
   \end{equation*}
and so we may rewrite the preceding two integrals as follows:
\begin{align*}
\int_{0}^{\pi} \big|1+ 2 \sum_{\ell=1}^{\frac{d}{2}} L_{2,2\ell}^{\diamond}(\cos t)\big|\,dt
&=
  \int_{0}^{\pi} \Big|1+ 2 \sum_{\ell=1}^{\frac{d}{2}} \cos (2\ell t)\Big|\,dt
  \\&
= \frac{1}{2}  \int_{0}^{2\pi} \Big|1+ 2 \sum_{\ell=1}^{\frac{d}{2}} \cos (\ell s)\Big|\,ds =  \frac{1}{2} \int_{0}^{2 \pi}  \bigg|\frac{\sin\big(( \frac{d+1}{2})s \big)}{\sin\big(\frac{s}{2} \big)}\bigg| \,ds
\end{align*}
and
\begin{align*}
\int_{\pi}^{2\pi} \Big|1+ 2 \sum_{\ell=1}^{\frac{d}{2}} L_{2,2\ell}^{\diamond}(\cos t)\Big|\,dt
&
=
\int_{\pi}^{2\pi} \Big|1+ 2 \sum_{\ell=1}^{\frac{d}{2}} \cos \big(2\ell (2\pi - t)\big)\Big|\,dt
\\
& =
  \frac{1}{2} \int_{0}^{2\pi} \Big|1+ 2 \sum_{\ell=1}^{\frac{d}{2}} \cos (\ell s) \Big|\,ds
= \frac{1}{2} \int_{0}^{2\pi}  \bigg|\frac{\sin\big(( \frac{d+1}{2})s \big)}{\sin\big(\frac{s}{2} \big)}\bigg|\,ds
\,.
\end{align*}
Combining,  we obtain the required formula for the even case.

Suppose now that $d$ is odd. In this case  Theorem~\ref{maino} gives that
\begin{align*}
    \boldsymbol{\lambda}\big(\mathcal{P}_{ d}(\mathbb{S}^{1})\big)
    &
    =  \int_{\mathbb{S}^{1}} \big| 2 \sum_{\ell=0}^{\floor{\frac{d}{2}}} L_{2,2\ell+1}^{\diamond}(\langle e_1, \eta \rangle)\big| d\sigma_2(\eta)
    \\&
        =\frac{1}{2\pi}\Big(\int_{0}^{\pi} \big| 2 \sum_{\ell=0}^{\floor{\frac{d}{2}}} L_{2,2\ell+1}^{\diamond}(\cos t)\big| dt
        +
        \int_{\pi}^{2\pi} \big| 2 \sum_{\ell=0}^{\floor{\frac{d}{2}}} L_{2,2\ell+1}^{\diamond}(\cos t)\big| dt\Big)\,.
    \\&
    =\frac{1}{2\pi}\Big(\int_{0}^{\pi} \big| 2 \sum_{\ell=0}^{\floor{\frac{d}{2}}} \cos((2\ell +1) t\big| dt
        +
        \int_{\pi}^{2\pi} \big| 2 \sum_{\ell=0}^{\floor{\frac{d}{2}}} \cos((2\ell +1)(2\pi- t)\big| dt\Big)\,.
\end{align*}
Now note that
$$ \sum_{\ell=0}^{\floor{\frac{d}{2}}} \cos\Big((2\ell+1)\frac{s}{2}\Big) = \textbf{Re}\, \, \Bigg( \sum_{\ell=0}^{\floor{\frac{d}{2}}} e^{i (2 \ell +1) \frac{s}{2}} \Bigg) = \frac{1}{2} \frac{\sin\big((\floor{\frac{d}{2}}+1\big)s)}{\sin(\frac{s}{2})}= \frac{1}{2} \frac{\sin\big((\frac{d+1}{2})s\big)}{\sin(\frac{s}{2})}\,,$$
and then the  proof follows the same steps as before, with the substitution $t=\frac{s}{2}$ as previously employed.

Finally, the limiting formula follows from the well-known properties of the Dirichlet kernel, see \cite[Chapter II, eq.(12.1)]{zygmund2002trigonometric}.
\end{proof}

We go on with the case $n>2$. Recall that the floor function $\lfloor x \rfloor$ denotes the largest integer less than or equal to $x$.
The following result is an analog of Theorem~\ref{projGegenbauerA}.

\begin{theorem} \label{projGegenbauerB}
    Given $n>2$ and $d\in \mathbb{N}$, 
\begin{align*}
    \boldsymbol{\lambda}\big(\mathcal{P}_{d}(\mathbb{S}^{n-1})\big) \,\, = \,\,
    \frac{1}{2\sqrt{\pi}} \,\frac{\Gamma(\frac{n}{2})\Gamma(d+{n})}{\Gamma(n-1)\Gamma(d+\frac{n+1}{2})}
     \, \int_{-1}^{1} \big| P_d^{(\frac{n-1}{2},\frac{n-1}{2})}(t)\big| (1-t^2)^{\frac{n-3}{2}} dt\,.
    \end{align*}
     \end{theorem}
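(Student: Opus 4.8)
The plan is to start from the integral representation \eqref{due} and to collapse the axially invariant kernel $\sum_{j=0}^{\lfloor d/2\rfloor} N_{n,d-2j}\,L_{n,d-2j}^{\diamond}$ into a \emph{single} Gegenbauer polynomial whose parameter is raised by one. First I would rewrite each summand by combining Lemma~\ref{lemm: identity Gegenbauer} with the dimension formula~\eqref{dimfor}. Setting $\lambda:=\tfrac{n-2}{2}$, the factorial prefactors cancel to give
\[
N_{n,k}\,L_{n,k}^{\diamond}(t)=\frac{n+2k-2}{n-2}\,C_k^{(\frac{n-2}{2})}(t)=\frac{k+\lambda}{\lambda}\,C_k^{(\lambda)}(t),
\]
so that the kernel $\mathbf{k}_{\mathcal P_d(\mathbb S^{n-1})}(e_1,\cdot)$ reduces to $\sum_{k\le d,\,k\equiv d\,(2)}\tfrac{k+\lambda}{\lambda}\,C_k^{(\lambda)}(\langle\,\cdot\,,e_1\rangle)$.

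The one genuinely non-routine step is the Gegenbauer index-raising identity
\[
\sum_{\substack{k=0\\ k\equiv d\,(2)}}^{d}\frac{k+\lambda}{\lambda}\,C_k^{(\lambda)}(t)=C_d^{(\lambda+1)}(t).
\]
I would prove this from the classical generating function $(1-2tr+r^2)^{-\lambda}=\sum_k C_k^{(\lambda)}(t)\,r^k$. Applying $r\tfrac{d}{dr}+\lambda$ to it and simplifying the factor $2r(t-r)+(1-2tr+r^2)=1-r^2$ yields $\sum_k\tfrac{k+\lambda}{\lambda}C_k^{(\lambda)}(t)\,r^k=(1-r^2)(1-2tr+r^2)^{-\lambda-1}$. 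Comparing coefficients of $r^d$ gives the two-step recurrence $\tfrac{d+\lambda}{\lambda}C_d^{(\lambda)}=C_d^{(\lambda+1)}-C_{d-2}^{(\lambda+1)}$, which telescopes over $k\equiv d$ (using $C_{-1}^{(\lambda+1)}=C_{-2}^{(\lambda+1)}=0$) to the displayed identity. Consequently $\mathbf{k}_{\mathcal P_d(\mathbb S^{n-1})}(e_1,\cdot)=C_d^{(n/2)}(\langle\,\cdot\,,e_1\rangle)$, the Gegenbauer polynomial of parameter $\tfrac n2$.

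It then remains to translate back to the Jacobi normalization and to reduce the spherical integral to a one-dimensional one. Using \eqref{gegjac} with $\lambda=\tfrac n2$ gives $C_d^{(n/2)}=\tfrac{\Gamma(n+d)\Gamma(\frac{n+1}{2})}{\Gamma(n)\Gamma(d+\frac{n+1}{2})}\,P_d^{(\frac{n-1}{2},\frac{n-1}{2})}$, and the spherical reduction identity~\eqref{umrechnen} converts $\int_{\mathbb S^{n-1}}|P_d^{(\frac{n-1}{2},\frac{n-1}{2})}(\eta_1)|\,d\sigma_n(\eta)$ into $\tfrac{\Gamma(n/2)}{\sqrt\pi\,\Gamma(\frac{n-1}{2})}\int_{-1}^{1}|P_d^{(\frac{n-1}{2},\frac{n-1}{2})}(t)|\,(1-t^2)^{\frac{n-3}{2}}\,dt$. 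Multiplying the two prefactors and simplifying by the elementary recurrences $\Gamma(n)=(n-1)\Gamma(n-1)$ and $\Gamma(\frac{n+1}{2})=\tfrac{n-1}{2}\Gamma(\frac{n-1}{2})$ collapses the Gamma quotient to $\tfrac{1}{2\sqrt\pi}\tfrac{\Gamma(n/2)\Gamma(d+n)}{\Gamma(n-1)\Gamma(d+\frac{n+1}{2})}$, which is exactly the claimed constant.

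The main obstacle is the index-raising summation identity: once one recognizes that the weighted partial sum of $C_k^{(\lambda)}$ over a fixed parity telescopes into $C_d^{(\lambda+1)}$, the proof is essentially complete, since everything after that is Gamma-function bookkeeping entirely parallel to the harmonic case treated in Theorem~\ref{projGegenbauerA}. I expect no genuine difficulty in the constant-chasing, as the target formula was evidently normalized to match; the only place where care is needed is verifying that the $k=0$ term is handled consistently (where $C_0^{(\lambda)}=1$ and $\tfrac{0+\lambda}{\lambda}=1=N_{n,0}L_{n,0}^{\diamond}$), so that the summation identity applies uniformly down to $k=0$.
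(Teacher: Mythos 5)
Your proposal is correct, and it reaches the result by a genuinely different route than the paper. The paper also starts from \eqref{due}, but then splits into even and odd $d$, rewrites each $L_{n,k}^{\diamond}$ as a Jacobi polynomial $P_k^{(\alpha,\alpha)}$ with $\alpha=\tfrac{n-3}{2}$, passes to the half-argument variable via Szeg\H{o}'s quadratic transformations $P_{2j}^{(\alpha,\alpha)}(x)\propto P_j^{(\alpha,-\frac12)}(2x^2-1)$ and $P_{2j+1}^{(\alpha,\alpha)}(x)\propto x\,P_j^{(\alpha,\frac12)}(2x^2-1)$, collapses the sum there with the Jacobi partial-sum formula \cite[(4.5.3)]{szeg1939orthogonal}, shows by Legendre duplication that the accompanying constants $C(n,j)$ all equal $2^{n-2}$, and finally transforms back to $P_d^{(\frac{n-1}{2},\frac{n-1}{2})}$. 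You instead stay within the Gegenbauer family the whole time: your reduction $N_{n,k}L_{n,k}^{\diamond}=\tfrac{k+\lambda}{\lambda}C_k^{(\lambda)}$ with $\lambda=\tfrac{n-2}{2}$ (which is exactly what underlies the second formula in Theorem~\ref{projGegenbauerA}), and your contiguity relation
\begin{equation*}
\frac{k+\lambda}{\lambda}\,C_k^{(\lambda)}(t)=C_k^{(\lambda+1)}(t)-C_{k-2}^{(\lambda+1)}(t),
\end{equation*}
derived by applying $r\frac{d}{dr}+\lambda$ to the generating function, telescope over a fixed parity and so identify the kernel in one stroke as $C_d^{(n/2)}(\langle\cdot,e_1\rangle)$. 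This agrees with the paper's final kernel $\tfrac12\tfrac{\Gamma(\frac{n-1}{2})}{\Gamma(n-1)}\tfrac{\Gamma(d+n)}{\Gamma(d+\frac{n+1}{2})}P_d^{(\frac{n-1}{2},\frac{n-1}{2})}$, since by \eqref{gegjac} one has $\tfrac{\Gamma(\frac{n+1}{2})}{\Gamma(n)}=\tfrac12\tfrac{\Gamma(\frac{n-1}{2})}{\Gamma(n-1)}$, and your closing Gamma bookkeeping is accurate. What your approach buys: no even/odd case split, no quadratic transformations, and a shorter, essentially self-contained argument; the $k=0,1$ base cases you flag are indeed the only degenerate instances of \eqref{dimfor} and Lemma~\ref{lemm: identity Gegenbauer}, and they check out. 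What the paper's approach buys: every nontrivial identity it uses is quoted verbatim from Szeg\H{o}'s book, whereas you invoke the classical Gegenbauer generating function, which the paper never states; this is harmless (it is equivalent to the normalization fixed by \eqref{RodriguezGegenbauer} and \eqref{gegjac}), but if you wanted to match the paper's citation discipline you would either cite it or note that your two-term recurrence is itself \cite[(4.5.4)--type]{szeg1939orthogonal} contiguity for ultraspherical polynomials.
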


When $d=1$, the preceding outcome aligns with Rutovitz's findings for the real Hilbert space from Equation~\eqref{beginning}. To see this, note   that then  the integral in the preceding formula equals $\frac{n+1}{n-1}$, and use \eqref{Legendre duplication formula} twice. To delve deeper into this interpretation, see  the concluding remark in Section \ref{final remark}.

\begin{proof}
We have to distinguish the even degrees $d$ form the   odd ones, and start with the even case.
By Theorem~\ref{maino} one has
\begin{equation*}
 \boldsymbol{\lambda}\big(\mathcal{P}_{d}(\mathbb{S}^{n-1})\big)
 =
\int_{\mathbb{S}^{n-1}}\,
\big| \mathbf{k}_{\mathcal{P}_{d}(\mathbb{S}^{n-1})}(e_1, \eta )\big| \,d\sigma_n(\eta)
\end{equation*}
with
$$\mathbf{k}_{\mathcal{P}_{d}(\mathbb{S}^{n-1})}(e_1, \eta )
 =  \sum_{j=0}^{\frac{d}{2}} N_{n,j} \,\,L_{n,2j}^{\diamond}(\eta_1)\,.
$$
Define
$
\alpha = \frac{n-3}{2}\,.
$
Then by Lemma~\ref{lemm: identity Gegenbauer}  and Equation~\eqref{dimfor} we get  that
\begin{align*}
     \mathbf{k}_{\mathcal{P}_{d}(\mathbb{S}^{n-1})}(e_1, \eta )
    =
  \sum_{j=0}^{\frac{d}{2}} \frac{(4j+n-2)(2j+n-3)!}{(2j)!(n-2)!}
  \,\,
    \frac{(2j)! \;\; (n-3)!}{(2j+n-3)!}
    \,\,
\frac{\Gamma(n-2 +2j)\Gamma(\frac{n-1}{2})}{\Gamma(n-2)\Gamma(\frac{n-1}{2}+2j)}
\,\,
P_{2j}^{(\alpha, \alpha)}(\eta_1)
\,,
\end{align*}
and hence
\begin{align} \label{kern}
     \mathbf{k}_{\mathcal{P}_{d}(\mathbb{S}^{n-1})}(e_1, \eta )
   =
\frac{\Gamma(\frac{n-1}{2})}{\Gamma(n-1)}\,\, \sum_{j=0}^{\frac{d}{2}}\, \frac{2(2j +\alpha + \frac{1}{2})\Gamma(2(j+\alpha +\frac{1}{2}))}{\Gamma(2j+\alpha+1)}\,\,
P_{2j}^{(\alpha, \alpha)}(\eta_1)\,.
\end{align}
By \cite[Theorem 4.1, p.~59 and (4.5.3), p.~71]{szeg1939orthogonal}, we have
\begin{align}\label{szegoe-even}
  P_{2j}^{(\alpha, \alpha)}(\eta_1) = \frac{\Gamma(2j+\alpha+1)j!}{\Gamma(j+\alpha+1)(2j)!} \,\,
P_{j}^{(\alpha, -\frac{1}{2})}(2\eta_1^2-1)\,,
\end{align}
and
   \begin{align}\label{szegoe-evenB}
   \sum_{j=0}^{\frac{d}{2}}\,
\frac{(2j + \alpha -\frac{1}{2}+1)\Gamma(j+\alpha-\frac{1}{2}+1)}{ \Gamma(j-\frac{1}{2}+1)}
\,
P_{j}^{(\alpha, -\frac{1}{2})}(2\eta_1^2-1)
=
 \frac{\Gamma(\frac{d}{2} + \alpha -\frac{1}{2} +2)}{\Gamma(\frac{d}{2}  -\frac{1}{2} +1)}
P_{\frac{d}{2}}^{(\alpha+1, -\frac{1}{2})}(2\eta_1^2-1)\,.
        \end{align}
Then, applying \eqref{szegoe-even} and \eqref{szegoe-evenB} to \eqref{kern}, we get
\begin{align} \label{rearranging-even}
\begin{split}
    \mathbf{k}_{\mathcal{P}_{d}(\mathbb{S}^{n-1})}(e_1, \eta )
    =
\frac{\Gamma(\frac{n-1}{2})}{\Gamma(n-1)}
\,
\sum_{j=0}^{\frac{d}{2}}\,
C(n,j)
\,
\frac{(2j + \alpha -\frac{1}{2}+1)\Gamma(j+\alpha-\frac{1}{2}+1)}{ \Gamma(j-\frac{1}{2}+1)}
\,
P_{j}^{(\alpha, -\frac{1}{2})}(2\eta_1^2-1)\,,
\end{split}
 \end{align}
 where
 \begin{align*}
   C(n,j)
   &
   =
\frac{2(2j +\alpha + \frac{1}{2})\Gamma(2(j+\alpha +\frac{1}{2}))}{\Gamma(2j+\alpha+1)}
\,\,
\frac{\Gamma(2j+\alpha+1)j!}{\Gamma(j+\alpha+1)(2j)!}
\,\,
\frac{ \Gamma(j-\frac{1}{2}+1)}{(2j + \alpha -\frac{1}{2}+1)\Gamma(j+\alpha-\frac{1}{2}+1)}
\\&
=
2
\,\,
\frac{\Gamma(2(j+\alpha +\frac{1}{2}))j!}{\Gamma((j+\alpha+\frac{1}{2})+\frac{1}{2})(2j)!}
\,\,
\frac{ \Gamma(j+\frac{1}{2})}{\Gamma(j+\alpha+\frac{1}{2})}\,.
 \end{align*}
We claim that
$
C(n,j) = 2^{n-2}\,.
$
Indeed, we use the formula
$$\Gamma\Big(j+\frac{1}{2}\Big) = \frac{(2j)!\sqrt{\pi}}{4^j j!}$$
 and for $x = j + \alpha + \frac{1}{2}$ the
Legendre duplication formula
\eqref{Legendre duplication formula},
to see that 
\[
C(n,j) = 2 \frac{\Gamma(2x)}{\Gamma(x + \frac{1}{2})\Gamma(x)} \frac{j!}{(2j)!}
\Gamma\Big(j+\frac{1}{2}\Big) = 2^{n-2}.
\]
            Summarizing, we get
 \begin{align*}
 \mathbf{k}_{\mathcal{P}_{d}(\mathbb{S}^{n-1})}(e_1, \eta )
    = 2^{n-2}
\frac{\Gamma(\frac{n-1}{2})}{\Gamma(n-1)}
\,
\frac{\Gamma(\frac{d}{2}+\frac{n}{2})}{\Gamma(\frac{d}{2}+\frac{1}{2})}
\,
P_{\frac{d}{2}}^{(\alpha+1, -\frac{1}{2})}(2\eta_1^2-1)\,.
           \end{align*}
           Finally, we once again use \cite[Theorem 4.1, p.59]{szeg1939orthogonal} to obtain
           \begin{align*}
             P_{\frac{d}{2}}^{(\alpha+1, -\frac{1}{2})}(2\eta_1^2-1)
             =
             \frac{\Gamma(\frac{d}{2} + \frac{n-1}{2} +1) \Gamma(d+1)}{\Gamma(d + \frac{n-1}{2} +1)\Gamma(\frac{d}{2}+1)}
             P_{d}^{(\frac{n-1}{2},\frac{n-1}{2})}(\eta_1)\,,
           \end{align*}
and conclude that
 \begin{align*}
 \mathbf{k}_{\mathcal{P}_{d}(\mathbb{S}^{n-1})}(e_1, \eta )
    &=
    2^{n-2}
\frac{\Gamma(\frac{n-1}{2})}{\Gamma(n-1)}
\,
\frac{\Gamma(\frac{d+n}{2})\,\Gamma(\frac{d+n}{2} + \frac{1}{2})\, \Gamma(d+1)}{\Gamma(\frac{d}{2}+\frac{1}{2})\,\Gamma(\frac{d}{2}+1)\,\Gamma(d + \frac{n+1}{2} )}\,\,
             P_{d}^{(\frac{n-1}{2},\frac{n-1}{2})}(\eta_1)\,\\
    &= \frac12 \, \frac{\Gamma(\frac{n-1}{2})}{\Gamma(n-1)} \,  \frac{ \Gamma(d+n)}{\Gamma(d + \frac{n+1}{2} )}\,\,
             P_{d}^{(\frac{n-1}{2},\frac{n-1}{2})}(\eta_1)\,,
           \end{align*}
           where, for the last equality, we have used \eqref{Legendre duplication formula} twice.

Consequently, by Equation~\eqref{umrechnen} we have
\begin{align} \label{fin}
\begin{split}
   &
 \boldsymbol{\lambda}\big(\mathcal{P}_{d}(\mathbb{S}^{n-1})\big)
    =\frac12 \, \frac{\Gamma(\frac{n-1}{2})}{\Gamma(n-1)} \,  \frac{ \Gamma(d+n)}{\Gamma(d + \frac{n+1}{2} )}\,
           \int_{\mathbb{S}^{n-1}}\, \big| P_{d}^{(\frac{n-1}{2},\frac{n-1}{2})}(\eta_1)\big|  \,d\sigma_n(\eta)
        \\[1ex]&
                    =
\frac12 \, \frac{\Gamma(\frac{n-1}{2})}{\Gamma(n-1)} \,  \frac{ \Gamma(d+n)}{\Gamma(d + \frac{n+1}{2} )}\,\, \frac{\Gamma(\frac{n}{2})}{\sqrt{\pi}\, \Gamma(\frac{n-1}{2})}\int_{-1}^1\, \big| P_{d}^{(\frac{n-1}{2},\frac{n-1}{2})}(t)\big|  (1-t^2)^{\frac{n-3}{2}} dt\,,
\end{split}
       \end{align}
and this finishes  the even case of our claim.

The proof of the  odd case is similar - but with some subtle differences.
If $d$ is odd, then
we are going to replace \eqref{szegoe-even}  by
\begin{align}\label{szegoe-odd}
  P_{2j+1}^{(\alpha, \alpha)}(\eta_1) = \frac{\Gamma(2j+\alpha+2)j!}{\Gamma(j+\alpha+1)(2j+1)!} \,\,
\eta_1 P_{j}^{(\alpha, \frac{1}{2})}(2\eta_1^2-1)\,,
\end{align}
and \eqref{szegoe-evenB} by
\begin{align}\label{szegoe-oddB}
 \sum_{j=0}^{\lfloor\frac{d}{2}\rfloor}\,
\,
\frac{(2j + \alpha +\frac{1}{2}+1)\Gamma(j+\alpha+\frac{1}{2}+1)}{\Gamma(j+\frac{1}{2}+1)}
\,
\eta_1 P_{j}^{(\alpha, \frac{1}{2})}(2\eta_1^2-1)
=
\frac{\Gamma(\lfloor\frac{d}{2}\rfloor + \alpha +\frac{1}{2} +2)}{\Gamma(\lfloor\frac{d}{2}\rfloor  +\frac{1}{2} +1)}
\,
\eta_1 P_{\lfloor\frac{d}{2}\rfloor}^{(\alpha, \frac{1}{2})}(2\eta_1^2-1)\,,
\end{align}
where again $\alpha = \frac{n-3}{2}$ (see  \cite[Theorem 4.1, p.59 and (4.5.3), p.~71]{szeg1939orthogonal}).
Starting as above,
we have
\begin{align}\label{kero-odd}
  \mathbf{k}_{\mathcal{P}_{d}(\mathbb{S}^{n-1})}(e_1, \eta )
= \sum_{j=0}^{\lfloor\frac{d}{2}\rfloor} N_{n,2j+1} \,\,L_{n,2j+1}^{ \diamond}(\eta_1)\,,
\end{align}
and then  (as for \eqref{kern})  we obtain
\begin{align} \label{kern-odd}
     \mathbf{k}_{\mathcal{P}_{d}(\mathbb{S}^{n-1})}(e_1, \eta )
   =
\frac{\Gamma(\frac{n-1}{2})}{\Gamma(n-1)}\,\, \sum_{j=0}^{\lfloor\frac{d}{2}\rfloor}\,
\frac{2((2j+1) +\alpha + \frac{1}{2})
   \Gamma(2(j+\alpha +1)) }{\Gamma((2j+1)+\alpha +1)}\,\,
P_{2j+1}^{(\alpha, \alpha)}(\eta_1)\,.
\end{align}
 Then as in \eqref{rearranging-even}
\begin{align} \label{rearranging-odd}
    \mathbf{k}_{\mathcal{P}_{d}(\mathbb{S}^{n-1})}(e_1, \eta )
    =
\frac{\Gamma(\frac{n-1}{2}))}{\Gamma(n-1)}
\,
\sum_{j=0}^{\lfloor\frac{d}{2}\rfloor}\,
C(n,j)
\,
\frac{(2j + \alpha +\frac{1}{2}+1)\Gamma(j+\alpha+\frac{1}{2}+1)}{ \Gamma(j+\frac{1}{2}+1)}
\,
\eta_1 P_{j}^{(\alpha, \frac{1}{2})}(2\eta_1^2-1)\,,
 \end{align}
 where
\begin{align*} \label{C-odd}
\begin{split}
&
  C(n,j)
  \\&
       =
              \frac{2((2j+1) +\alpha + \frac{1}{2})
   \Gamma(2(j+\alpha +1)) }{\Gamma((2j+1)+\alpha +1)}\,
   \frac{\Gamma(2j+\alpha+2)j!}{\Gamma(j+\alpha+1)(2j+1)!} \,
   \frac{\Gamma(j+\frac{1}{2}+1)}{(2j + \alpha +\frac{1}{2}+1)\Gamma(j+\alpha+\frac{1}{2}+1)}
   \\&
       =2\,\,
                \frac{\Gamma(2(j+\alpha +1)) j!}{\Gamma(j+\alpha+1)(2j+1)!} \,
   \frac{\Gamma(j+\frac{1}{2}+1)}{\Gamma(j+\alpha+\frac{1}{2}+1)}
   \,.
 \end{split}
   \end{align*}
      Using that
   $$\Gamma((j+1)+\frac{1}{2}) = \frac{\sqrt{\pi}}{4^{j+1}} \frac{(2(j+1))!}{(j+1)!}= \frac{1}{2}\frac{\sqrt{\pi}}{4^j }\frac{(2j+1)!}{j!}$$
      and again the Legendre duplication formula  from
      \eqref{Legendre duplication formula}
      (now for $x=j+\alpha +1$), we as above get
      \[
      C(n,j) =  \frac{\Gamma(2x)}{\Gamma(x)\Gamma(x+\frac{1}{2})} \frac{\sqrt{\pi}}{4^j}
      =2^{n-2}\,.
      \]
Then we derive from \eqref{rearranging-odd} and \eqref{szegoe-oddB} that
\begin{align} \label{rearranging-oddBB}
    \mathbf{k}_{\mathcal{P}_{d}(\mathbb{S}^{n-1})}(e_1, \eta )
    =
\frac{\Gamma(\frac{n-1}{2})}{\Gamma(n-1)} 2^{n-2}
\,
\frac{\Gamma(\lfloor\frac{d}{2}\rfloor + \alpha +\frac{1}{2} +2)}{\Gamma(\lfloor\frac{d}{2}\rfloor  +\frac{1}{2} +1)}
\,
\eta_1 P_{\lfloor\frac{d}{2}\rfloor}^{(\alpha, \frac{1}{2})}(2\eta_1^2-1)\,,
 \end{align}
 Reversing the process, using  \cite[Theorem 4.1, p.59]{szeg1939orthogonal},  we conclude from
           \begin{align*}
             \eta_1P_{\lfloor\frac{d}{2}\rfloor}^{(\alpha+1, \frac{1}{2})}(2\eta_1^2-1)
             =
             \frac{\Gamma(\lfloor\frac{d}{2}\rfloor + \frac{n-1}{2} +1) \Gamma(2\lfloor\frac{d}{2}\rfloor+2)}
             {\Gamma(2\lfloor\frac{d}{2}\rfloor + \frac{n-1}{2} +2)\Gamma(\lfloor\frac{d}{2}\rfloor+1)}
             P_{2\lfloor\frac{d}{2}\rfloor+1}^{(\frac{n-1}{2},\frac{n-1}{2})}(\eta_1)
           \end{align*}
           that
           \begin{align*}
    \mathbf{k}_{\mathcal{P}_{d}(\mathbb{S}^{n-1})}(e_1, \eta )
    =
\frac{\Gamma(\frac{n-1}{2})}{\Gamma(n-1)} 2^{n-2}
\,
\frac{\Gamma(\lfloor\frac{d}{2}\rfloor + \frac{n}{2}+1)}{\Gamma(\lfloor\frac{d}{2}\rfloor  +\frac{1}{2} +1)}
\,
\frac{\Gamma(\lfloor\frac{d}{2}\rfloor + \frac{n}{2}+ \frac{1}{2} ) \Gamma(2\lfloor\frac{d}{2}\rfloor+2)}
             {\Gamma(2\lfloor\frac{d}{2}\rfloor + \frac{n}{2} +\frac{3}{2})\Gamma(\lfloor\frac{d}{2}\rfloor+1)}
             P_{2\lfloor\frac{d}{2}\rfloor+1}^{(\frac{n-1}{2},\frac{n-1}{2})}(\eta_1)\,.
 \end{align*}

But since $\lfloor\frac{d}{2}\rfloor = \frac{d-1}{2}$, we have, using again \eqref{Legendre duplication formula} twice,
\begin{align*}
             \mathbf{k}_{\mathcal{P}_{d}(\mathbb{S}^{n-1})}(e_1, \eta )
  & =\, \frac{\Gamma(\frac{n-1}{2})}{\Gamma(n-1)} 2^{n-2}
\,
\frac{\Gamma(\frac{d+n}{2}+\frac{1}{2})}{\Gamma(\frac{d+1}{2}+\frac{1}{2})} \,\,   \frac{ \Gamma(\frac{d+n}{2})}{\Gamma(\frac{d+1}{2})}
        \,\,
                \frac{ \Gamma(d+1)}{\Gamma(d+ \frac{n}{2} +\frac{1}{2})} P_{d}^{(\frac{n-1}{2},\frac{n-1}{2})}(\eta_1)\\
    &= \frac12 \, \frac{\Gamma(\frac{n-1}{2})}{\Gamma(n-1)} \,  \frac{ \Gamma(d+n)}{\Gamma(d + \frac{n+1}{2} )}\,\,
             P_{d}^{(\frac{n-1}{2},\frac{n-1}{2})}(\eta_1)\,.
 \end{align*}
 The proof finishes as in \eqref{fin}.
\end{proof}

\smallskip
The subsequent result, in particular, shows that here $\boldsymbol{\lambda}\big(\mathcal{P}_{d}(\mathbb{S}^{n-1})\big)$ exhibits a strictly smaller order than the one given by the Kadets-Snobar upper estimate, since, for fixed $n$, we have for large $d$
(see again the Equations~\eqref{dimformuA},
~\eqref{aprilo},and~\eqref{mainasym})
\[
\dim \mathcal{P}_{d}(\mathbb{S}^{n-1}) \,\sim_{c(n)} \, d^{n-1}\,.
\]

\begin{corollary} \label{repetition}
For each integer $n >2$
    \[
    \lim_{d \to \infty}
    \frac{\boldsymbol{\lambda}\big(\mathcal{P}_{ d}(\mathbb{S}^{n-1})\big)}{d^\frac{n-2}{2}}
    \,=  \,   \frac{2^{n+1} }{\pi^2\,({n}-{2})}  \,\frac{\Gamma(\frac{n}{4}+\frac{1}{2})^2}{\Gamma(n-1)}  
    \,.
  \]
  \end{corollary}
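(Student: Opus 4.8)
The plan is to start from the closed integral formula for $\boldsymbol{\lambda}\big(\mathcal{P}_{d}(\mathbb{S}^{n-1})\big)$ provided by Theorem~\ref{projGegenbauerB} and to extract the asymptotics of its two $d$-dependent factors separately. Writing $\alpha=\frac{n-1}{2}$, that formula reads
\[
\boldsymbol{\lambda}\big(\mathcal{P}_{d}(\mathbb{S}^{n-1})\big)
= \frac{1}{2\sqrt{\pi}}\,\frac{\Gamma(\tfrac{n}{2})}{\Gamma(n-1)}\,\frac{\Gamma(d+n)}{\Gamma(d+\frac{n+1}{2})}\, I_d,
\qquad
I_d:=\int_{-1}^{1}\big|P_d^{(\alpha,\alpha)}(t)\big|\,(1-t^2)^{\frac{n-3}{2}}\,dt .
\]
The Gamma quotient is immediate from~\eqref{mainasym}, giving $\Gamma(d+n)/\Gamma(d+\frac{n+1}{2})\sim d^{\frac{n-1}{2}}$. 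Hence the whole problem reduces to showing that $d^{1/2}I_d$ converges to an explicit constant; the statement then collapses to a product of Gamma factors after one use of the duplication formula~\eqref{Legendre duplication formula}.

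The delicate point, and the main obstacle, is the asymptotics of $I_d$. Here the Jacobi polynomial carries the parameter $\alpha=\frac{n-1}{2}$, whereas the weight exponent is $\frac{n-3}{2}=\alpha-1$, so the integration weight is \emph{not} the orthogonality weight of $P_d^{(\alpha,\alpha)}$. This mismatch means that, unlike in the harmonic case handled through \cite{aptekarev1995asymptotic}, I cannot simply quote an $L_1$-norm limit of orthonormal polynomials against their own weight. Instead I would substitute $t=\cos\theta$, turning $I_d$ into $\int_0^\pi |P_d^{(\alpha,\alpha)}(\cos\theta)|\,(\sin\theta)^{2\alpha-1}\,d\theta$, and insert Szeg\H{o}'s Darboux-type asymptotic \cite[Theorem~8.21.8]{szeg1939orthogonal},
\[
P_d^{(\alpha,\alpha)}(\cos\theta)= d^{-1/2}\pi^{-1/2}\,2^{\alpha+\frac12}(\sin\theta)^{-\alpha-\frac12}\cos(N\theta+\gamma)+O(d^{-3/2}),
\]
valid uniformly on each $[\delta,\pi-\delta]$, with $N=d+\alpha+\tfrac12$ and $\gamma=-\tfrac{(\alpha+1/2)\pi}{2}$. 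Multiplying by the weight, the integrand is to leading order $d^{-1/2}(\sin\theta)^{\alpha-\frac32}|\cos(N\theta+\gamma)|$, and since $\alpha-\frac32>-1$ exactly when $n>2$, the limiting weight $(\sin\theta)^{\alpha-3/2}$ is integrable on $(0,\pi)$.

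The core analytic step is then to replace the rapidly oscillating factor $|\cos(N\theta+\gamma)|$ by its mean value $\frac{1}{\pi}\int_0^\pi|\cos u|\,du=\frac{2}{\pi}$; on each compact $[\delta,\pi-\delta]$ this is the standard equidistribution (Riemann--Lebesgue) averaging lemma, giving $\int_\delta^{\pi-\delta}(\sin\theta)^{\alpha-\frac32}|\cos(N\theta+\gamma)|\,d\theta\to\frac{2}{\pi}\int_\delta^{\pi-\delta}(\sin\theta)^{\alpha-\frac32}\,d\theta$. To let $\delta\to0$ and interchange the two limits I would control the endpoint zones with the classical uniform bounds $|P_d^{(\alpha,\alpha)}(\cos\theta)|\le C\,d^{-1/2}\theta^{-\alpha-1/2}$ on $(1/d,\delta)$ and $|P_d^{(\alpha,\alpha)}(\cos\theta)|\le C\,d^{\alpha}$ on $(0,1/d)$ from \cite[(7.32.2)]{szeg1939orthogonal}; because $\alpha>\frac12$ for $n>2$, both contributions are $o(d^{-1/2})$ uniformly as $\delta\to0$, hence negligible. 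This endpoint bookkeeping is where I expect the only real technical care to be required.

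Collecting everything yields $d^{1/2}I_d\to \frac{2}{\pi}\,\pi^{-1/2}2^{\alpha+1/2}\int_0^\pi(\sin\theta)^{\alpha-\frac32}\,d\theta$, and evaluating the last integral through the Beta-function identity $\int_0^\pi(\sin\theta)^{s}\,d\theta=\sqrt{\pi}\,\Gamma(\frac{s+1}{2})/\Gamma(\frac{s}{2}+1)$ with $s=\frac{n-4}{2}$ gives $d^{1/2}I_d\to \frac{2^{n/2+1}}{\pi}\,\frac{\Gamma(\frac{n-2}{4})}{\Gamma(\frac{n}{4})}$. Finally I would assemble the three asymptotic factors to obtain $\boldsymbol{\lambda}\big(\mathcal{P}_{d}(\mathbb{S}^{n-1})\big)/d^{\frac{n-2}{2}}\to \frac{2^{n/2}}{\pi^{3/2}}\,\frac{\Gamma(\frac n2)\,\Gamma(\frac{n-2}{4})}{\Gamma(n-1)\,\Gamma(\frac n4)}$, and simplify using the duplication formula~\eqref{Legendre duplication formula} with $x=\frac n4$ (to rewrite $\Gamma(\frac n2)/\Gamma(\frac n4)$) together with $\Gamma(\frac n4+\frac12)=\frac{n-2}{4}\Gamma(\frac{n-2}{4})$, which turns the expression into the claimed constant $\frac{2^{n+1}}{\pi^2(n-2)}\,\frac{\Gamma(\frac n4+\frac12)^2}{\Gamma(n-1)}$.
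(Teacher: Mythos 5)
Your proposal is correct, and it reaches exactly the paper's intermediate constant $\frac{2^{n/2}}{\pi^{3/2}}\,\frac{\Gamma(\frac n2)\,\Gamma(\frac{n-2}{4})}{\Gamma(n-1)\,\Gamma(\frac n4)}$ before the final duplication-formula cleanup, so the endgame agrees with the paper line for line. The difference lies in how the central analytic fact --- the limit of $\sqrt{d}\,I_d$ where $I_d=\int_{-1}^{1}|P_d^{(\frac{n-1}{2},\frac{n-1}{2})}(t)|(1-t^2)^{\frac{n-3}{2}}\,dt$ --- is obtained. The paper quotes it ready-made: it invokes Szeg\H{o}'s 1933 asymptotic formula (Equation (64-1) of the memoir \emph{Asymptotische Entwicklungen der Jacobischen Polynome}) for $\lim_d \sqrt{d}\int_0^1(1-x)^\lambda(1+x)^\mu|P_d^{(\alpha,\beta)}(x)|\,dx$ with $\alpha=\beta=\frac{n-1}{2}$, $\lambda=\mu=\frac{n-3}{2}$, after using the reflection identity $P_d^{(\alpha,\beta)}(t)=(-1)^dP_d^{(\beta,\alpha)}(-t)$ to pass from $[-1,1]$ to $[0,1]$; the hypothesis $2\lambda>\alpha-\tfrac32$ of that formula is precisely your integrability condition $n>2$. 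You instead reprove this limit from scratch: Darboux asymptotics (Szeg\H{o}'s book, Theorem~8.21.8), equidistribution of $|\cos(N\theta+\gamma)|$ toward its mean $\tfrac{2}{\pi}$, and endpoint control via the uniform bounds (7.32.2). Your bookkeeping there is sound --- on $(0,1/d)$ the contribution is $O(d^{-\alpha})=o(d^{-1/2})$ since $\alpha\ge 1$, and on $(1/d,\delta)$ it is $O(d^{-1/2}\delta^{\alpha-1/2})$, uniformly small in $d$ as $\delta\to 0$, which is exactly what the limit interchange requires (your phrasing ``$o(d^{-1/2})$'' for the second zone is slightly loose, but the argument you describe is the right one). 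What each approach buys: the paper's citation is a one-line step but depends on a 1933 source that is genuinely hard to access (the acknowledgments even thank a colleague for procuring it), while your route is self-contained modulo two standard results in Szeg\H{o}'s widely available book --- at the cost of the oscillation-plus-endpoint analysis, which is in essence the proof of the cited formula in the special symmetric case needed here. You also correctly identified why the reference used in the harmonic case (Aptekarev et al.) cannot be reused: there the weight matches the orthogonality weight of the polynomial, whereas here the exponents are mismatched by one unit.
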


\begin{proof}
  The following limit  from \cite[§19, Equation (64-1), p. 84]{szeg1933Asymptotische} is crucial for our purposes: For $\alpha, \beta, \lambda, \mu > -1$
                                 such that $2 \lambda > \alpha - \frac{3}{2}$
   \begin{align} \label{doro-eq}
   \begin{split}
      \lim_{d \to \infty} \,\, \sqrt{d}\,\,     
      \int_{0}^{1}  (1-x)^\lambda (1+x)^\mu &|P^{\alpha, \beta}_d(x)| dx  \\&
                   \,= \, \frac{2^{\lambda+\mu +1}}{ \sqrt{\pi}} \frac{2}{\pi}
    \int_{0}^{\frac{\pi}{2}}  \big(\sin (\theta/2) \big)^{2\lambda-\alpha +\frac{1}{2}}
    \big(\cos (\theta/2) \big)^{2\mu-\beta +\frac{1}{2}} d \theta\,,
   \end{split}
        \end{align}
   and we are going to use this equation for $\alpha = \beta = \frac{n-1}{2}$
   and $\lambda = \mu = \frac{n-3}{2}$. Moreover, we need to know that by
       \cite[ 4.1.3, p.~59]{szeg1939orthogonal}
    Jacobi polynomials satisfy
    \begin{align*} \label{SZ1}
      P_d^{(\alpha, \beta)} (t) = (-1)^d P_d^{(\beta,\alpha)} (-t), \quad\, t\in [-1, 1]\,,
    \end{align*}
    hence, as $d\to\infty$,
\begin{align*}
    \sqrt{d}\int_{-1}^{1} \big| P_d^{(\frac{n-1}{2},\frac{n-1}{2})}(t)\big| (1-t^2)^{\frac{n-3}{2}} dt
    & = \,   2\sqrt{d}
    \int_{0}^{1} \big| P_d^{(\frac{n-1}{2},\frac{n-1}{2})}(t)\big| (1-t^2)^{\frac{n-3}{2}} dt \\
    & \, \longrightarrow \frac{2^n}{\pi^{\frac{3}{2}}}\, \int_{0}^{\frac{\pi}{2}}  \big(\sin (\theta/2) \big)^{\frac{n}{2}-2}
    \big(\cos (\theta/2) \big)^{\frac{n}{2}-2} d \theta.
\end{align*}  
    Combining all this with Theorem~\ref{projGegenbauerB} and using again the formula \eqref{mainasym} for the asymptotic of ratios of Gamma functions, which implies that
  \[
  \lim_{d \to \infty} \,\, \frac{\Gamma(d+n)}{\Gamma(d+\frac{n+1}{2})d^\frac{n-1}{2}}\,\, = \,\,                     1\,,              \]
gives
    \begin{align*}
    \lim_{d \to \infty}
    \frac{\boldsymbol{\lambda}\big(\mathcal{P}_{d}(\mathbb{S}^{n-1})\big)}{d^\frac{n-2}{2}}
    &
    =\,
    \frac1{2\sqrt{\pi}}\frac{\Gamma(\frac{n}{2})}{\, \Gamma(n-1)}
         \, \left[\lim_{d \to \infty}\,\, \frac{\Gamma(d+n)}{\Gamma(d+\frac{n+1}{2})d^\frac{n-1}{2}}\right]\,\left[ \lim_{d \to \infty} \,2\sqrt{d}\,
    \int_{0}^{1} \big| P_d^{(\frac{n-1}{2},\frac{n-1}{2})}(t)\big| (1-t^2)^{\frac{n-3}{2}} dt\right]
         \\
     &=  \frac{2^{n-1} \Gamma(\frac{n}{2})}{\pi^2 \Gamma(n-1)} \,\, \int_{0}^{\frac{\pi}{2}}  \big(\sin (\theta/2) \big)^{\frac{n}{2}-2}
    \big(\cos (\theta/2) \big)^{\frac{n}{2}-2} d \theta
       \,. 
                            \end{align*}  
  We now check that
  \begin{equation*}
   \text{\bf I}
    := \int_{0}^{\frac{\pi}{2}}  \big(\sin (\theta/2) \big)^{\frac{n}{2}-2}
    \big(\cos (\theta/2) \big)^{\frac{n}{2}-2} d \theta =
    \frac{\sqrt{\pi}}{2^{\frac{n}{2}-1}}
    \frac{\Gamma(\frac{n}{4}-\frac{1}{2})}{\Gamma(\frac{n}{4})}\,.
  \end{equation*}
  Indeed, using  the identity  $ 2\sin(\theta/2) \cos (\theta/2) = \sin \theta$ and the substitution
  $t = \sin^2 \theta$ a~simple calculation leads to
  \[
  \text{\bf I} = \frac{1}{2^{\frac{n}{2}-1}}\,\,  \text{\bf B}\Big(\frac{n}{4}-\frac{1}{2},\frac{1}{2}\Big)
  = \frac{1}{2^{\frac{n}{2}-1}}\,\, \frac{\Gamma(\frac{n}{4}-\frac{1}{2})\Gamma(\frac{1}{2})}{\Gamma(\frac{n}{4})}\,,
  \]
  where $\text{\bf B}$ as usual stands for the Beta-function.

Altogether we have, 
    \begin{align*}
    \lim_{d \to \infty}
    \frac{\boldsymbol{\lambda}\big(\mathcal{P}_{d}(\mathbb{S}^{n-1})\big)}{d^\frac{n-2}{2}}
    &
    =\,
  \frac{2^{n-1} }{\pi^2 \Gamma(n-1)\Gamma(\frac{n}{4}-\frac{1}{2})} \,\frac{\sqrt{\pi}\Gamma(\frac{n}{2})}{2^{\frac{n}{2}-1}\Gamma(\frac{n}{4})}\,\,  \\
  &=\, \frac{2^{n-1} }{\pi^2}  \,\frac{\Gamma(\frac{n}{4}+\frac{1}{2})\Gamma(\frac{n}{4}-\frac{1}{2})}{\Gamma(n-1)}\,\, =\,\, \frac{2^{n} }{\pi^2}  \,\frac{\Gamma(\frac{n}{4}+\frac{1}{2})^2}{(\frac{n}{2}-1)\Gamma(n-1)} \,, 
                            \end{align*}
where in the second to last equality we have used \eqref{Legendre duplication formula}  with $x=\frac{n}{4}$.
       \end{proof}

\smallskip

\begin{remark} \label{differences}
  Summarizing we see that projection constants of spaces of homogeneous
  polynomials in the real and complex case behave substantially different. Indeed, we have
  for fixed $n>2$ and  large $d$ that
  \[
  \boldsymbol{\lambda}\big(\mathcal{P}_{ d}(\mathbb{S}^{n-1})\big)
  \,\sim_{c(n)} d^{\frac{n-2}{2}}
  \quad \text{and} \quad
   \boldsymbol{\lambda}\big(\mathcal{P}_{ d}(\mathbb{S}_{\mathbb{C}}^{n-1})\big)
  \,\sim_{c(n)} \,1,
    \]
  whereas for  fixed $d$ and large $n$
  \[
  \boldsymbol{\lambda}\big(\mathcal{P}_{ d}(\mathbb{S}^{n-1})\big)
  \,\sim_{c(d)} n^{\frac{d}{2}}
  \quad \text{and} \quad
  \boldsymbol{\lambda}\big(\mathcal{P}_{ d}(\mathbb{S}_{\mathbb{C}}^{n-1})\big)
  \,\sim_{c(d)} n^{\frac{d}{2}}\,.
    \]
  Here the first result is a consequence of Corollary~\ref{repetition}, the third one
  was proved in Example 6.2 of \cite{defant2009volume},
  whereas the second and fourth statements  are simple consequences of the
  Ryll-Wojtaszczyk formula~\eqref{fascinating}.
   \end{remark}

\subsection{Finite degree polynomials}

We start by applying Theorem~\ref{maino} to the case $n=2$, which yields an explicit expression for the projection constant $\boldsymbol{\lambda}\big(\mathcal{P}_{\le d}(\mathbb{S}^1)\big)$. Recall again that $\mathcal{P}_{ \leq d}(\mathbb{S}^{1})$ 
stands for the Banach space of all complex-valued degree-$d$ polynomials on the one dimensional real euclidean sphere.

It is worth noting that, as a by-product, we recover the well-known Lozinski-Kharshiladze formula for the projection constant of the Banach space  $\textbf{Trig}_{\leq d}(\mathbb{T})$ of all  complex-valued degree-$d$ polynomials
\[
P(z) = \sum_{|k|\leq d} c_k z^n, \quad\, z \in \mathbb{T}
\]
defined on the  circle group $\mathbb{T}$,
equipped with the supremum norm over $\mathbb{T}$. This formula states that $\boldsymbol{\lambda}\big(\textbf{Trig}_{\leq d}(\mathbb{T})\big)$ equals the Lebesgue constant of the Dirichlet kernel (see \cite[IIIB. Theorem 22]{wojtaszczyk1996banach} and~ \cite{natanson1961constructive}).

\medskip

\begin{proposition}
For all $d \in \mathbb{N}$, we have
\[
\boldsymbol{\lambda}\big(\textbf{Trig}_{\leq d}(\mathbb{T})\big) = \boldsymbol{\lambda}\big(\mathcal{P}_{ \leq d}(\mathbb{S}^{1})\big) =
\frac{1}{2\pi} \int_{0}^{2\pi}\bigg| \frac{\sin\big((d+ \frac{1}{2})t \big)}{\sin\big(\frac{t}{2} \big)}\bigg|\,dt\,.
\]
In particular,
\[
\lim_{d \to \infty}  \frac{\boldsymbol{\lambda}\big(\mathcal{P}_{ \leq d}(\mathbb{S}^{1})\big)}{\log d} \,=\, \frac{4}{\pi^2}\,.
\]
\end{proposition}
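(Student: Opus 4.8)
The plan is to read off $\boldsymbol{\lambda}\big(\mathcal{P}_{\leq d}(\mathbb{S}^{1})\big)$ from the integral representation \eqref{tres} of Theorem~\ref{maino} specialized to $n=2$, and to recognize the kernel appearing there as the classical Dirichlet kernel. First I would insert the dimension values $N_{2,0}=1$ and $N_{2,j}=2$ for $j\geq 1$ (both obtained from \eqref{dimfor}), together with the identification $L_{2,j}^{\diamond}(\cos t)=\cos(jt)$ furnished by \eqref{d=2}. This turns the summand in \eqref{tres} into
\[
\sum_{j=0}^{d} N_{2,j}\,L_{2,j}^{\diamond}(\cos t) = 1 + 2\sum_{j=1}^{d}\cos(jt) = \frac{\sin\big((d+\tfrac{1}{2})t\big)}{\sin(t/2)}\,,
\]
where the last equality is the standard closed form of the Dirichlet kernel $D_d$.

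Next I would convert the spherical integral in \eqref{tres} into an integral over $[0,2\pi]$. Parametrizing $\mathbb{S}^1$ by $\eta=(\cos t,\sin t)$ so that $\eta_1=\cos t$, and using that the normalized surface measure satisfies $d\sigma_2=\tfrac{1}{2\pi}\,dt$ (equivalently, invoking the spherical reduction \eqref{umrechnen}, which is legitimate since the integrand depends only on $\eta_1$), one immediately obtains
\[
\boldsymbol{\lambda}\big(\mathcal{P}_{\leq d}(\mathbb{S}^{1})\big)
= \frac{1}{2\pi}\int_{0}^{2\pi}\bigg|\frac{\sin\big((d+\tfrac{1}{2})t\big)}{\sin(t/2)}\bigg|\,dt\,,
\]
which is precisely the Lebesgue constant of the Dirichlet kernel and establishes the claimed explicit formula.

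For the first equality I would argue that the two Banach spaces coincide up to isometry. By \eqref{ludoAA} we have $\mathcal{P}_{\leq d}(\mathbb{S}^{1})=\mathcal{H}_{\leq d}(\mathbb{S}^{1})$, and on $\mathbb{S}^1$ each summand $\mathcal{H}_k(\mathbb{S}^1)$ is spanned by $e^{\pm ik\theta}$; hence $\mathcal{P}_{\leq d}(\mathbb{S}^{1})$ is, as a subspace of $C(\mathbb{S}^1)$, exactly the span of $\{e^{ik\theta}:|k|\leq d\}$. The correspondence $\theta\mapsto z=e^{i\theta}$ then yields an isometric isomorphism onto $\textbf{Trig}_{\leq d}(\mathbb{T})$, the supremum norms over $\mathbb{S}^1$ and over $\mathbb{T}$ matching, so the two projection constants are equal; in this way the Lozinski--Kharshiladze formula is recovered as a by-product.

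Finally, the asymptotic is a classical property of the Lebesgue constants: one has $\frac{1}{2\pi}\int_{0}^{2\pi}\big|\frac{\sin((d+1/2)t)}{\sin(t/2)}\big|\,dt=\frac{4}{\pi^2}\log d+O(1)$ (see \cite[Chapter~II]{zygmund2002trigonometric}), whence dividing by $\log d$ and letting $d\to\infty$ gives the stated limit $\tfrac{4}{\pi^2}$. I expect no serious obstacle here; the only points demanding care are the bookkeeping of the measure normalization $d\sigma_2=\tfrac{1}{2\pi}\,dt$ and the precise identification of $\mathcal{H}_k(\mathbb{S}^1)$ with $\operatorname{span}\{e^{\pm ik\theta}\}$, while the logarithmic asymptotics is supplied directly by the cited classical result.
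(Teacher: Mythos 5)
Your proposal is correct and follows essentially the same route as the paper: both compute the constant from Theorem~\ref{maino} (formula~\eqref{tres}) by recognizing $1+2\sum_{j=1}^{d}\cos(jt)$ as the Dirichlet kernel, both establish the isometry $\mathcal{P}_{\leq d}(\mathbb{S}^1)\cong\textbf{Trig}_{\leq d}(\mathbb{T})$ (the paper via the explicit map $(c_k)\mapsto\sum c_k(x+iy)^k+\sum c_{-k}(x-iy)^k$ plus a dimension count, which is the same content as your identification $\mathcal{H}_k(\mathbb{S}^1)=\operatorname{span}\{e^{\pm ik\theta}\}$), and both conclude the limit from the classical Lebesgue-constant asymptotics in Zygmund.
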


\medskip

\begin{proof}
We claim that the linear mapping
$\Phi \colon \textbf{\bf Trig}_{\leq d}(\mathbb T) \to \mathcal{P}_{\leq d}(\mathbb{S}^{1})$,  which for  $P(z) = \sum_{|k|\leq d} c_k z^k$ is given by the formula:
\[
\Phi(P)(x, y):=
\sum_{k=1}^ d c_{-k} (x-iy)^k +c_0+
\sum_{k=1}^ d c_k (x+iy)^k, \quad\, (x, y) \in \mathbb{S}^{1}\,,
\]
is an isometric isomorphism. Being obviously isometric, it is injective.
On the other hand, we deduce from  Proposition~\ref{orthodeco} that
\[
\mathcal{P}_{\leq d}(\mathbb{S}^{1}) = \mathcal H_{\leq d} (\mathbb{S}^{1})= \bigoplus_{k \leq d}\mathcal{H}_k(\mathbb{S}^{1})\,.
\]
Hence by Equation~\eqref{dimfor} we see  that  $\text{dim} \,\mathcal{P}_{\leq d}(\mathbb{S}^{1}) = 2d +1=\text{dim} \textbf{\bf Trig}_{\leq d}(\mathbb{T})$. Combining,
$\Phi$ is as desired an isometric isomorphism,  and  as a consequence
\[
\boldsymbol{\lambda}\big(\textbf{Trig}_{\leq d}(\mathbb{T})\big) = \boldsymbol{\lambda}\big(\mathcal{P}_{ \leq d}(\mathbb{S}^{1})\big)\,.
\]
Now we apply Theorem~\ref{maino} and  Equation~\eqref{dimfor} to get
\begin{equation*}
\mathbf{k}_{\mathcal P_{\leq d} (\mathbb S^1)} (e_1,\eta) =
1+ \sum_{\ell=1}^{d} 2 L_{2,\ell}^{\diamond}(\langle e_1, \eta \rangle), \quad \eta \in
\mathbb S^1\,.
\end{equation*}
  Then similar to the proofs of   Propositions~\ref{neu} and \ref{prop : 2 dimensional homogeneous} we have
  \begin{align*}
    \boldsymbol{\lambda}\big(\mathcal{P}_{ \leq d}(\mathbb{S}^{1})\big)
    &
    =  \frac{1}{2\pi}\bigg(\int_{0}^{\pi} \Big|1+ 2 \sum_{\ell=1}^{d} \cos (\ell t)\Big|\,dt + \int_{\pi}^{2\pi} \Big|1+ 2 \sum_{\ell=1}^{d} \cos \big(\ell (2\pi - t)\big)\Big|\,dt\bigg),
            \end{align*}
and the argument follows as before.
\end{proof}

\smallskip
For $n >2$ we get the following formula - complementing a result of Ragozin \cite[Theorem 4]{ragozin1971uniform}.

\smallskip

\begin{theorem} \label{starto2}
   For  $n >2$ and $d \in \mathbb{N}$
\[
 \boldsymbol{\lambda}\big(\mathcal{P}_{ \leq d}(\mathbb{S}^{n-1})\big) =
 \frac{\Gamma(\frac{n}{2})}{\sqrt{\pi}\, \Gamma(n-1)}\,
  \frac{\Gamma(d+n-1)}{\Gamma(d+ \frac{n-1}{2})}
 \,
  \int_{-1}^{1} \Big| P_d^{(\frac{n-1}{2},\frac{n-3}{2})}(t)\Big| (1-t^2)^{\frac{n-3}{2}} dt\,.
   \]
  \end{theorem}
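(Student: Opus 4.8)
The plan is to reduce the kernel to a single Jacobi polynomial, exactly in the spirit of the proof of Theorem~\ref{projGegenbauerB}, while observing that the finite-degree case is in fact cleaner because no parity splitting is required. First I would start from the kernel expression furnished by Theorem~\ref{maino}, Equation~\eqref{tres}, namely
\[
\mathbf{k}_{\mathcal{P}_{\leq d}(\mathbb{S}^{n-1})}(e_1,\eta) = \sum_{j=0}^{d} N_{n,j}\, L_{n,j}^{\diamond}(\eta_1)\,,
\]
so that by Remark~\ref{main-ibkA} the projection constant equals $\int_{\mathbb{S}^{n-1}}\big|\mathbf{k}_{\mathcal{P}_{\leq d}(\mathbb{S}^{n-1})}(e_1,\eta)\big|\,d\sigma_n(\eta)$.

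Next, writing $\alpha=\frac{n-3}{2}$, I would substitute the identity $L_{n,j}^{\diamond} = \frac{j!\,\Gamma(\frac{n-1}{2})}{\Gamma(j+\frac{n-1}{2})}P_j^{(\alpha,\alpha)}$ from Lemma~\ref{lemm: identity Gegenbauer} (which also holds for $j=0,1$ by direct inspection) together with the dimension formula~\eqref{dimfor}. Since $N_{n,j}=\frac{(2j+n-2)(n+j-3)!}{j!\,(n-2)!}$ and $n-2=2\alpha+1$, the factorials $j!$ cancel and each summand collapses to a clean Gamma quotient, so that
\[
\mathbf{k}_{\mathcal{P}_{\leq d}(\mathbb{S}^{n-1})}(e_1,\eta) = \frac{\Gamma(\frac{n-1}{2})}{\Gamma(n-1)}\sum_{j=0}^{d}\frac{(2j+2\alpha+1)\,\Gamma(j+2\alpha+1)}{\Gamma(j+\alpha+1)}\,P_j^{(\alpha,\alpha)}(\eta_1)\,.
\]
The crucial observation is that these coefficients are precisely those in the Szeg\H{o} summation formula $\sum_{j=0}^{N}\frac{(2j+\alpha+\beta+1)\Gamma(j+\alpha+\beta+1)}{\Gamma(j+\beta+1)}P_j^{(\alpha,\beta)} = \frac{\Gamma(N+\alpha+\beta+2)}{\Gamma(N+\beta+1)}P_N^{(\alpha+1,\beta)}$ (the identity already used as~\eqref{szegoe-evenB}), now applied with $\beta=\alpha$ and $N=d$. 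This telescopes the whole sum at once, and using $2\alpha+2=n-1$ and $\alpha+1=\frac{n-1}{2}$, so that $P_d^{(\alpha+1,\alpha)}=P_d^{(\frac{n-1}{2},\frac{n-3}{2})}$, I obtain
\[
\mathbf{k}_{\mathcal{P}_{\leq d}(\mathbb{S}^{n-1})}(e_1,\eta) = \frac{\Gamma(\frac{n-1}{2})}{\Gamma(n-1)}\,\frac{\Gamma(d+n-1)}{\Gamma(d+\frac{n-1}{2})}\,P_d^{(\frac{n-1}{2},\frac{n-3}{2})}(\eta_1)\,.
\]

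Finally I would take absolute values, integrate over $\mathbb{S}^{n-1}$, and apply the spherical reduction identity~\eqref{umrechnen} to pass to an integral over $[-1,1]$ against the weight $(1-t^2)^{\frac{n-3}{2}}$, which introduces the factor $\frac{\Gamma(\frac{n}{2})}{\sqrt{\pi}\,\Gamma(\frac{n-1}{2})}$; the two occurrences of $\Gamma(\frac{n-1}{2})$ then cancel, yielding exactly the claimed formula. The main point, and the only place where any genuine content enters, is the recognition that, in contrast with the homogeneous case, the full sum over all degrees $j=0,\dots,d$ matches the Szeg\H{o} identity directly, with no need for the substitution $t\mapsto 2t^2-1$ or for separating even and odd degrees; everything else is Gamma-function bookkeeping, so I do not anticipate any serious obstacle.
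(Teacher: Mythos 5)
Your proposal is correct and follows essentially the same route as the paper's proof: the kernel expression from Theorem~\ref{maino}, the substitution via Lemma~\ref{lemm: identity Gegenbauer} and the dimension formula~\eqref{dimfor}, the Szeg\H{o} summation identity from \cite[(4.5.3), p.~71]{szeg1939orthogonal} applied with $\beta=\alpha=\frac{n-3}{2}$ and $N=d$ to collapse the sum to $P_d^{(\frac{n-1}{2},\frac{n-3}{2})}$, and finally the spherical reduction~\eqref{umrechnen}. Your observation that no parity splitting or quadratic substitution is needed (in contrast to Theorem~\ref{projGegenbauerB}) is exactly the simplification the paper exploits as well.
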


\begin{proof}
The proof follows the  lines of the proof of Theorem~\ref{projGegenbauerB}. Again we start with Theorem~\ref{maino} which shows that
\[
\mathbf{k}_{\mathcal{P}_{\leq d}(\mathbb{S}^{n-1})}( e_1, \eta )
= \sum_{j=0}^{d} N_{n,j} \,\,L_{n,j}^{\diamond}(\eta_1)
\]
and
\begin{equation*}
 \boldsymbol{\lambda}\big(\mathcal{P}_{\leq d}(\mathbb{S}^{n-1})\big)
 =
\int_{\mathbb{S}^{n-1}}\,
\big| \mathbf{k}_{\mathcal{P}_{\leq d}(\mathbb{S}^{n-1})}(\langle e_1, \eta \rangle)\big| \,d\sigma_n(\eta)\,.
\end{equation*}
Then, as before,
 by Lemma~\ref{lemm: identity Gegenbauer}  and Equation~\eqref{dimfor} we  have
\begin{align} \label{kern-leqd}
     \mathbf{k}_{\mathcal{P}_{\leq d}(\mathbb{S}^{n-1})}( e_1, \eta )
   =
\frac{\Gamma(\frac{n-1}{2})}{\Gamma(n-1)}\,\, \sum_{j=0}^{d}\,(2j +n-2) \, \frac{\Gamma(n+j-2)}{\Gamma(\frac{n-1}{2}+j)}\,
P_{j}^{(\alpha, \alpha)}(\eta_1)\,,
\end{align}
where $\alpha = \frac{n-3}{2}$. Now we first rewrite this expression getting
\begin{align*}
\mathbf{k}_{\mathcal{P}_{\leq d}(\mathbb{S}^{n-1})}( e_1, \eta )
=
\frac{\Gamma(\frac{n-1}{2})}{\Gamma(n-1)}\,
\sum_{j=0}^{d}
\frac{(2j +\alpha +\alpha +1)\Gamma(j +\alpha +\alpha +1)}{\Gamma(j +\alpha +1)} \,
P_{j}^{(\alpha, \alpha)}(\eta_1)
\,,
\end{align*}
But by
\cite[(4.5.3), p. 71]{szeg1939orthogonal}
we have
\begin{equation*}
  \sum_{j=0}^{d}
\frac{(2j +\alpha +\alpha +1)\Gamma(j +\alpha +\alpha +1)}{\Gamma(j +\alpha +1)} \,
P_{j}^{(\alpha, \alpha)}(\eta_1)
\,=\,
\frac{\Gamma(d +\alpha +\alpha +2)}{\Gamma(d +\alpha +1)}
P_{d}^{(\alpha+1, \alpha)}(\eta_1)\,,
\end{equation*}
which shows
\begin{align*}
\mathbf{k}_{\mathcal{P}_{\leq d}(\mathbb{S}^{n-1})}( e_1, \eta )
=
\frac{\Gamma(\frac{n-1}{2})}{\Gamma(n-1)}\,
\frac{\Gamma(d +\alpha +\alpha +2)}{\Gamma(d +\alpha +1)}
P_{d}^{(\alpha+1, \alpha)}(\eta_1)
=
\frac{\Gamma(\frac{n-1}{2})}{\Gamma(n-1)}\,
\frac{\Gamma(d +n-1)}{\Gamma(d +\frac{n-1}{2})}
P_{d}^{(\frac{n-1}{2}, \frac{n-3}{2})}(\eta_1)
\,.
\end{align*}
Consequently, by \eqref{umrechnen} we have
\begin{align*}
&
 \boldsymbol{\lambda}\big(\mathcal{P}_{\leq d}(\mathbb{S}^{n-1})\big)
 =
 \frac{\Gamma(\frac{n-1}{2})}{\Gamma(n-1)}\,
\frac{\Gamma(d +n-1)}{\Gamma(d +\frac{n-1}{2})}\,
\frac{\Gamma(\frac{n}{2})}{\sqrt{\pi}\, \Gamma(\frac{n-1}{2})}
\,
 \int_{-1}^1\, \big| P_{d}^{(\frac{n-1}{2}, \frac{n-3}{2})}(t)\big|  (1-t^2)^{\frac{n-3}{2}} dt\,,
 \end{align*}
 which completes the proof.
\end{proof}

\smallskip

  Note that for each $n$  and  large  $d$
  $$\dim \mathcal{P}_{ \leq d}(\mathbb{S}^{n-1}) \,\sim_{c(n)}\,  d^{n-1}.$$
 Indeed, this easily follows by combining the Equations \eqref{LH9}, \eqref{ludoAA}
  and~\eqref{asymp dimension ratio}.
  Hence, as in the case of Corollary~\ref{repetition} (see also  Remark~\ref{differences}), this again shows that here the Kadets-Snobar theorem leads to a weak upper estimate
 - in contrast to  the result for homogeneous spherical harmonics from
 Theorem~\ref{thm: asymptotic order real homogeneous harmonics}.
 \smallskip

\begin{corollary} \label{Fin}
For each integer $n > 2$
   \[
  \lim_{d \to \infty} \frac{\boldsymbol{\lambda}\big(\mathcal{P}_{ \leq d}(\mathbb{S}^{n-1})\big)}{d^{\frac{n-2}{2}}}
  \,=\,
  \frac{\Gamma(\frac{n}{2}-1)}{2^{\frac{n}{2}-3}\pi \Gamma(\frac{n}{2}-\frac12)^2}\,
        \,.
  \]
 \end{corollary}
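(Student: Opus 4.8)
The plan is to start from the exact identity of Theorem~\ref{starto2} and track the three factors of
\[
\boldsymbol{\lambda}\big(\mathcal{P}_{\leq d}(\mathbb{S}^{n-1})\big)
=
\frac{\Gamma(\frac{n}{2})}{\sqrt{\pi}\,\Gamma(n-1)}\,
\frac{\Gamma(d+n-1)}{\Gamma(d+\frac{n-1}{2})}\,
\int_{-1}^{1}\big|P_d^{(\frac{n-1}{2},\frac{n-3}{2})}(t)\big|\,(1-t^2)^{\frac{n-3}{2}}\,dt
\]
separately as $d\to\infty$. The prefactor is constant in $d$; by~\eqref{mainasym} (with $a=n-1$, $b=\frac{n-1}{2}$) the Gamma ratio satisfies $\Gamma(d+n-1)/\big(\Gamma(d+\frac{n-1}{2})\,d^{\frac{n-1}{2}}\big)\to 1$; and the weighted $L_1$-norm will be shown to decay like $d^{-1/2}$. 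Since $d^{\frac{n-1}{2}}\cdot d^{-1/2}=d^{\frac{n-2}{2}}$, the product has a finite limit, and everything reduces to computing $\lim_{d\to\infty}\sqrt{d}\,\int_{-1}^1\cdots\,dt$.

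The main difficulty, absent in Corollary~\ref{repetition}, is that $P_d^{(\frac{n-1}{2},\frac{n-3}{2})}$ has \emph{unequal} parameters, so its modulus is not symmetric and the integral over $[-1,1]$ does not simply double that over $[0,1]$. To handle this I would split $\int_{-1}^1=\int_0^1+\int_{-1}^0$ and, in the second integral, substitute $t\mapsto-t$ and use the reflection identity $P_d^{(\alpha,\beta)}(-t)=(-1)^dP_d^{(\beta,\alpha)}(t)$ from~\cite[4.1.3, p.~59]{szeg1939orthogonal}. This gives
\[
\int_{-1}^1\big|P_d^{(\frac{n-1}{2},\frac{n-3}{2})}(t)\big|(1-t^2)^{\frac{n-3}{2}}dt
=\int_0^1\Big(\big|P_d^{(\frac{n-1}{2},\frac{n-3}{2})}(t)\big|+\big|P_d^{(\frac{n-3}{2},\frac{n-1}{2})}(t)\big|\Big)(1-t^2)^{\frac{n-3}{2}}dt,
\]
reducing the problem to two integrals over $[0,1]$ governed by the Szeg\H{o} asymptotic~\eqref{doro-eq} with $\lambda=\mu=\frac{n-3}{2}$ and the two orderings of the parameters $\{\frac{n-1}{2},\frac{n-3}{2}\}$.

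Applying~\eqref{doro-eq} to each piece (the constraint $2\lambda>\alpha-\frac{3}{2}$ holds since $n>2$), the prefactor $\frac{2^{\lambda+\mu+1}}{\sqrt\pi}\frac2\pi$ equals $\frac{2^{n-1}}{\pi^{3/2}}$ in both cases, and the two limiting trigonometric integrands are $(\sin\frac\theta2)^{\frac{n-4}{2}}(\cos\frac\theta2)^{\frac{n-2}{2}}$ and the same with $\sin$ and $\cos$ interchanged. Their sum factors as $(\sin\frac\theta2\cos\frac\theta2)^{\frac{n-4}{2}}(\sin\frac\theta2+\cos\frac\theta2)$; after the substitution $\phi=\theta/2$ and exploiting the symmetry $\phi\mapsto\frac\pi2-\phi$ about $\pi/4$, the $\theta$-integral collapses to $2\int_0^{\pi/2}(\sin\phi)^{\frac{n-2}{2}}(\cos\phi)^{\frac{n-4}{2}}d\phi=B\big(\tfrac n4,\tfrac{n-2}{4}\big)$. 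Hence
\[
\lim_{d\to\infty}\sqrt d\int_{-1}^1\big|P_d^{(\frac{n-1}{2},\frac{n-3}{2})}(t)\big|(1-t^2)^{\frac{n-3}{2}}dt
=\frac{2^{n-1}}{\pi^{3/2}}\,\frac{\Gamma(\frac n4)\,\Gamma(\frac{n-2}{4})}{\Gamma(\frac{n-1}{2})}.
\]

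Assembling the three factors yields
\[
\lim_{d\to\infty}\frac{\boldsymbol{\lambda}\big(\mathcal{P}_{\leq d}(\mathbb{S}^{n-1})\big)}{d^{\frac{n-2}{2}}}
=\frac{2^{n-1}\,\Gamma(\frac n2)\,\Gamma(\frac n4)\,\Gamma(\frac{n-2}{4})}{\pi^2\,\Gamma(n-1)\,\Gamma(\frac{n-1}{2})},
\]
and the remaining, most bookkeeping-heavy step is to match this with the stated closed form. I would apply the Legendre duplication formula~\eqref{Legendre duplication formula} with $x=\frac{n-2}{4}$ to get $\Gamma(\frac n4)\Gamma(\frac{n-2}{4})=2^{2-\frac n2}\sqrt\pi\,\Gamma(\frac n2-1)$, and with $x=\frac{n-1}{2}$ to write $\Gamma(n-1)=2^{n-2}\pi^{-1/2}\Gamma(\frac{n-1}{2})\Gamma(\frac n2)$. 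Substituting both and cancelling $\Gamma(\frac n2)$ reduces the constant to $\frac{2^{3-\frac n2}\,\Gamma(\frac n2-1)}{\pi\,\Gamma(\frac{n-1}{2})^2}$, which is exactly $\frac{\Gamma(\frac n2-1)}{2^{\frac n2-3}\pi\,\Gamma(\frac n2-\frac12)^2}$ since $\Gamma(\frac{n-1}{2})=\Gamma(\frac n2-\frac12)$. The genuine analytic input---the reflection identity and Szeg\H{o}'s asymptotic---is standard; the only real obstacle is the disciplined juggling of Gamma-function identities, which I expect to be the main source of potential error.
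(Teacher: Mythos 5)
Your proposal is correct: every step checks out, and your final limit
\[
\lim_{d\to\infty}\sqrt d\int_{-1}^{1}\bigl|P_d^{(\frac{n-1}{2},\frac{n-3}{2})}(t)\bigr|(1-t^2)^{\frac{n-3}{2}}\,dt
=\frac{2^{n-1}}{\pi^{3/2}}\,\frac{\Gamma(\frac n4)\,\Gamma(\frac{n-2}{4})}{\Gamma(\frac{n-1}{2})}
\]
agrees with the value the paper obtains (note $\Gamma(\frac{n-2}{4})=\Gamma(\frac n4-\frac12)$ and $\Gamma(\frac{n-1}{2})=\Gamma(\frac n2-\frac12)$), after which both arguments conclude identically via \eqref{mainasym} and two applications of \eqref{Legendre duplication formula}. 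Where you genuinely diverge from the paper is in the key analytic step. The paper does not split the integral at all: it invokes a second, full-interval asymptotic from \cite[\S 20, p.~87]{szeg1933Asymptotische} (see also \cite{rau1929lebesgueschen}), namely that for $\alpha>-\frac12$, $\beta>-1$ one has $\lim_{d\to\infty}\sqrt d\int_{-1}^{1}(1-x)^\alpha(1+x)^\beta|P_d^{(\alpha+1,\beta)}(x)|\,dx=\frac{2^{\alpha+\beta+2}}{\pi^{3/2}}\,\Gamma(\frac\alpha2+\frac14)\Gamma(\frac\beta2+\frac34)/\Gamma(\frac{\alpha+\beta}{2}+1)$, applied with $\alpha=\beta=\frac{n-3}{2}$; this is tailor-made for the asymmetric polynomial $P_d^{(\frac{n-1}{2},\frac{n-3}{2})}$ against the symmetric weight and finishes the key step in one citation. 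You instead reconstruct exactly this case from the half-interval formula \eqref{doro-eq}: split at $0$, reflect via $P_d^{(\alpha,\beta)}(-t)=(-1)^dP_d^{(\beta,\alpha)}(t)$ from \cite[4.1.3, p.~59]{szeg1939orthogonal}, apply \eqref{doro-eq} twice with the parameters in both orders, and symmetrize the two trigonometric integrals into the Beta value $\mathbf{B}(\frac n4,\frac{n-2}{4})$. What each approach buys: the paper's proof is shorter, at the cost of importing an additional classical result; yours is self-contained modulo tools already used in Corollary~\ref{repetition} (namely \eqref{doro-eq} and the reflection identity), and in fact carries out, in the special case at hand, precisely the derivation the paper alludes to when it remarks that the \S 20 equality "is in fact proved as an application of \eqref{doro-eq}" in \cite{szeg1933Asymptotische}.
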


 \begin{proof} The proof is similar to that of Corollary~\ref{repetition}. We use that by \cite[§20,  p. 87]{szeg1933Asymptotische} (see also the main result of \cite{rau1929lebesgueschen})
  for $\alpha > -\frac{1}{2}$
 and $\beta > -1$
  \begin{align*}
   \lim_{d \to \infty} \,\, \sqrt{d}\,\,
                \int_{-1}^{1}
                                 (1-x)^\alpha (1+x)^\beta \big|P^{(\alpha+1, \beta)}_d(x)\big| dx
                                 \,=\,
                                 \frac{2^{\alpha + \beta +2}}{\pi^{\frac{3}{2}}}\,
        \frac{\Gamma(\frac{\alpha}{2}+\frac{1}{4})\Gamma(\frac{\beta}{2}+\frac{3}{4}) }
        {\Gamma(\frac{\alpha + \beta}{2}+1)}\,.
 \end{align*}
 In \cite{szeg1933Asymptotische}  this equality is  in fact  proved as an application of \eqref{doro-eq},
  and in the following we apply it to $\alpha = \beta = \frac{n-3}{2}$. Additionally,  using \eqref{mainasym}, we have
     \[
     \lim_{d \to \infty}  \frac{1}{d^{\frac{n-1}{2}}}
     \,
       \frac{\Gamma(d+n-1)}{\Gamma(d+ \frac{n-1}{2})} \, =\, 1\,.
     \]
   Combining with Theorem~\ref{starto2}, we get
      \begin{align*}
    &
    \lim_{d \to \infty}
    \frac{\boldsymbol{\lambda}\big(\mathcal{P}_{ \leq d}(\mathbb{S}^{n-1})\big)}{d^\frac{n-2}{2}}
    \\&
    =
    \frac{\Gamma(\frac{n}{2})}{\sqrt{\pi}\, \Gamma(n-1)}\,
         \,
     \lim_{d \to \infty}  \frac{1}{d^{\frac{n-1}{2}}}
     \,
       \frac{\Gamma(d+n-1)}{\Gamma(d+ \frac{n-1}{2})}
     \,
     \lim_{d \to \infty} \,\, \sqrt{d}\,
         \int_{-1}^{1} \big| P_d^{(\frac{n-1}{2},\frac{n-3}{2})}(t)\big| (1-t^2)^{\frac{n-3}{2}} dt
        \\&
        =
        \frac{\Gamma(\frac{n}{2})}{\sqrt{\pi}\, \Gamma(n-1)}\,
         \,          \,
     \frac{2^{n-1}}{\pi^{\frac{3}{2}}}\,
        \frac{\Gamma(\frac{n}{4}-\frac{1}{2})\Gamma(\frac{n}{4})}{ \Gamma(\frac{n}{2}-\frac{1}{2})}
        =\frac{2^{n-1}\Gamma(\frac{n}{2})}{\pi^2\, \Gamma(n-1)}\,
        \frac{\Gamma(\frac{n}{4}-\frac{1}{2})\Gamma(\frac{n}{4})}{ \Gamma(\frac{n}{2}-\frac{1}{2})}
    \,.                         \end{align*}
        Then applying the Legendre duplication formula
~\eqref{Legendre duplication formula} twice, with  $x = \frac{n}{4} -\frac{1}{2} $  and with $x = \frac{n}{2} -\frac{1}{2} $
concludes the argument.
 \end{proof}

\smallskip

We finish with the following  special case $n=3$ of Theorem~\ref{starto2} and its  Corollary~\ref{Fin}.
Note that according to Theorem~\ref{main-ibk} we here may replace the projection constant of  $\mathcal{P}_{ \leq d}(\mathbb{S}^{2})$ by the norm of the orthogonal projection $\pi_{\mathcal{P}_{ \leq d}(\mathbb{S}^{2})} \colon C(\mathbb{S}^{2}) \to \mathcal{P}_{ \leq d}(\mathbb{S}^{2})$.
In this form the result is due to Gronwall \cite[Equations (7), (8) and (27)]{gronwall1914degree}
(see also  \cite[Section 4.2.]{atkinson2012spherical}).
\smallskip
\begin{corollary}  
For  each $d \in \mathbb{N}$
\[
\boldsymbol{\lambda}\big(\mathcal{P}_{ \leq d}(\mathbb{S}^{2})\big)
= \frac{d+1}{2} \int_{-1}^{1}
|P_d^{(1,0)}(t)| dt\,.
\]
Moreover,
\begin{align*}
\lim_{d \to \infty} \frac{\boldsymbol{\lambda}\big(\mathcal{P}_{ \leq d}(\mathbb{S}^{2})\big)}{\sqrt{d}}
\,=\,
2 \sqrt{\frac{2}{\pi}}\,.
\end{align*}
\end{corollary}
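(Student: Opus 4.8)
The plan is to derive both assertions as direct specializations to $n=3$ of the two results already proved for $n>2$, namely Theorem~\ref{starto2} for the exact formula and Corollary~\ref{Fin} for the asymptotics. Since $n=3$ satisfies the hypothesis $n>2$ of both, no new machinery is needed; the entire task reduces to evaluating the Gamma-function factors at $n=3$.

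First I would establish the exact identity. Setting $n=3$ in Theorem~\ref{starto2}, the weight $(1-t^2)^{\frac{n-3}{2}}$ collapses to the constant $1$, the Jacobi parameters become $\bigl(\frac{n-1}{2},\frac{n-3}{2}\bigr)=(1,0)$, and the prefactor simplifies. Concretely I would use $\Gamma\!\bigl(\frac{n}{2}\bigr)=\Gamma\!\bigl(\frac32\bigr)=\frac{\sqrt{\pi}}{2}$, $\Gamma(n-1)=\Gamma(2)=1$, and $\frac{\Gamma(d+n-1)}{\Gamma(d+\frac{n-1}{2})}=\frac{\Gamma(d+2)}{\Gamma(d+1)}=d+1$. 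Substituting, the prefactor reduces to $\frac{\sqrt{\pi}/2}{\sqrt{\pi}}\,(d+1)=\frac{d+1}{2}$, giving exactly $\boldsymbol{\lambda}\bigl(\mathcal{P}_{\leq d}(\mathbb{S}^2)\bigr)=\frac{d+1}{2}\int_{-1}^{1}|P_d^{(1,0)}(t)|\,dt$.

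Next I would obtain the limit. The cleanest route specializes Corollary~\ref{Fin} at $n=3$: the exponent $\frac{n-2}{2}$ equals $\frac12$, so the normalizing power is $\sqrt{d}$, while the constant $\frac{\Gamma(\frac{n}{2}-1)}{2^{\frac{n}{2}-3}\pi\,\Gamma(\frac{n}{2}-\frac12)^2}$ becomes $\frac{\Gamma(\frac12)}{2^{-3/2}\pi\,\Gamma(1)^2}=\frac{\sqrt{\pi}}{2^{-3/2}\pi}=\frac{2^{3/2}}{\sqrt{\pi}}=2\sqrt{\tfrac{2}{\pi}}$, as claimed. Alternatively, one can re-derive the limit from the exact formula together with the Szeg\H{o} asymptotic used in the proof of Corollary~\ref{Fin}: with $\alpha=\beta=0$ one gets $\sqrt{d}\int_{-1}^{1}|P_d^{(1,0)}(t)|\,dt\to\frac{4}{\pi^{3/2}}\Gamma(\tfrac14)\Gamma(\tfrac34)=4\sqrt{\tfrac{2}{\pi}}$ via the reflection formula $\Gamma(\tfrac14)\Gamma(\tfrac34)=\pi/\sin(\pi/4)=\pi\sqrt{2}$, and then multiplies by $\frac{d+1}{2d}\to\frac12$ to reach $2\sqrt{\tfrac{2}{\pi}}$.

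Since both parts are specializations of results in hand, there is no genuine obstacle; the only care required is careful bookkeeping of the Gamma arguments and confirming that $n=3$ meets the hypotheses $n>2$ of the cited statements. A minor point worth flagging, should one choose the second route for the asymptotics, is the identification of $P_d^{(1,0)}$ as $P_d^{(\alpha+1,\beta)}$ with $\alpha=\beta=0$, so that the Szeg\H{o} integral asymptotic applies verbatim.
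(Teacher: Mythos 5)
Your proposal is correct and coincides with the paper's own treatment: the paper states this corollary precisely as the special case $n=3$ of Theorem~\ref{starto2} and Corollary~\ref{Fin}, and your Gamma-function evaluations ($\Gamma(\tfrac32)=\tfrac{\sqrt{\pi}}{2}$, $\Gamma(d+2)/\Gamma(d+1)=d+1$, and the constant $2^{3/2}/\sqrt{\pi}=2\sqrt{2/\pi}$) are exactly the required bookkeeping. Your alternative derivation of the limit via the Szeg\H{o} asymptotic with $\alpha=\beta=0$ and the reflection formula is also correct, but it is not needed beyond the direct specialization.
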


\section{Transition to Real Coefficients} \label{final remark}

Throughout the article, the function spaces under consideration were defined on the real
unit sphere $\mathbb{S}^{n-1}$ but took values in the complex field $\mathbb{C}$. In particular,
all polynomials carried complex coefficients and each space was viewed as a complex
Banach space equipped with the supremum norm on $\mathbb{S}^{n-1}$.

It is natural to ask what becomes of the theory when one restricts attention to
\emph{real}-valued functions, that is, when all coefficients in the defining polynomials are
required to be real. In this setting, the analogous spaces have the same dimensions as
$\mathbb{R}$-vector spaces (i.e. the dimension of the complex vector space of $\mathbb C$-valued polynomials coincides with the dimension of the real vector space of $\mathbb R$-valued polynomials), and all arguments in the paper go through verbatim: the
reproducing kernels involved in the various projections are real-valued polynomials, so all
integral formulas for the corresponding projection constants remain unchanged.

\smallskip

Nevertheless, the Banach-space geometry of the underlying spaces may change. A striking
example is provided by the degree-$d$ harmonics on the circle. As noted in the comments
following Proposition~\ref{neu}, the complex version $\mathcal{H}_{d}(\mathbb{S}^{1})$ is
isometrically isomorphic to the complex space $\ell_1^2(\mathbb{C})$, whereas its
real-coefficient counterpart turns out to be a $2$-dimensional real Hilbert space.  
We give a short proof of this fact.

\medskip

In the real setting, we restrict attention to real-valued harmonics on $\mathbb{S}^1$ and
view $\mathcal{H}_{d}(\mathbb{S}^{1})$ as a vector space over $\mathbb{R}$. The real-valued degree-$d$ harmonics are exactly the functions of the form
\[
f(\cos\theta,\sin\theta)=a\cos(d\theta)+b\sin(d\theta), \qquad a,b\in\mathbb{R}.
\]
Thus
\[
\mathcal{H}_{d}(\mathbb{S}^{1})
=\operatorname{span}_{\mathbb{R}}\{\cos(d\theta),\,\sin(d\theta)\},
\qquad \dim_{\mathbb{R}}\mathcal{{H}}_{d}(\mathbb{S}^{1})=2.
\]

Let $a,b\in\mathbb{R}$ and write $R=\sqrt{a^2+b^2}$, choosing $\varphi\in\mathbb{R}$ so that
$a=R\cos\varphi$ and $b=R\sin\varphi$. Then for every $\theta\in\mathbb{R}$,
\[
a\cos(d\theta)+b\sin(d\theta)=R\cos(d\theta-\varphi),
\]
and therefore
\[
\|f\|_{\infty}
=\sup_{\theta\in\mathbb{R}}|R\cos(d\theta-\varphi)|
=R=\sqrt{a^2+b^2}.
\]
Hence the mapping
\[
\ell_2^2(\mathbb{R})\ni(a,b)\longmapsto a\cos(d\cdot)+b\sin(d\cdot)\in\mathcal{H}_{d}(\mathbb{S}^{1})
\]
is an isometric linear bijection of real Banach spaces. In particular, the real version of
$\mathcal{H}_{d}(\mathbb{S}^{1})$ is a $2$-dimensional Hilbert space.

\smallskip

This distinction in Banach-space structure, however, does not affect the numerical value
of the projection constant. This can be checked directly by comparing the expression in
Proposition~\ref{neu} with the case $n=2$ in~\eqref{beginning}.

As another example illustrating these remarks, recall from~\eqref{aprilo} that
$\mathcal{P}_{d}(\mathbb{S}^{n-1})=\mathcal{P}_{d}(\ell_2^n(\mathbb{R}))$. For $d=1$,
Theorem~\ref{projGegenbauerB} yields a projection constant which agrees with the value
predicted by Rutovitz’s formula for the real Hilbert space appearing in~\eqref{beginning}.

\section*{Acknowledgments}

We thank the referee for providing a more elementary argument for Proposition~\ref{cor:unique_projection}, for drawing our attention to it, and for reading the manuscript with great care and helping us improve its presentation. The second author would like to express gratitude to Professor Mariano Su\'arez-\'Alvarez for bringing reference~\cite{aptekarev1995asymptotic} to our attention. He also thanks Professor Luciano Abadias for clarifying a calculation in his article~\cite{abadias2016quadrature}, which proved instrumental for our purposes. The fourth author would like to thank Professor Dorothee Haroske for her assistance in obtaining and providing the book~\cite{szeg1933Asymptotische}.


\begin{thebibliography}{10}

\bibitem{abadias2016quadrature}
L.~Abadias, P.~J. Miana, and N.~Romero.
\newblock Quadrature rules for {$L^1$}-weighted norms of orthogonal polynomials.
\newblock {\em Mediterr. J. Math.}, 13(3):1291--1306, 2016.

\bibitem{aptekarev1995asymptotic}
A.~Aptekarev, V.~Buyarov, and I.~Degeza.
\newblock Asymptotic behavior of the {$L^p$}-norms and the entropy for general orthogonal polynomials.
\newblock {\em Sbornik: Mathematics}, 82(2):373, 1995.

\bibitem{atkinson2012spherical}
K.~Atkinson and W.~Han.
\newblock {\em Spherical harmonics and approximations on the unit sphere: an introduction}, volume 2044.
\newblock Springer Science \& Business Media, 2012.

\bibitem{basso2019computation}
G.~Basso.
\newblock Computation of maximal projection constants.
\newblock {\em J. Funct. Anal.}, 277(10):3560--3585, 2019.

\bibitem{blom2025properties}
T.~Blom, S.~A. Hokamp, A.~Jimenez, and J.~Laubacher.
\newblock Properties of reproducing kernel {H}ilbert spaces of a group action.
\newblock {\em arXiv preprint:2504.10701}, 2025.

\bibitem{bourgain1989}
J.~Bourgain.
\newblock Homogeneous polynomials on the ball and polynomial bases.
\newblock {\em Isr. J. Math.}, 68(3):327--347, 1989.

\bibitem{chalmers1990determination}
B.~L. Chalmers and F.~T. Metcalf.
\newblock Determination of a minimal projection from {$C[-1,1]$} onto the quadratics.
\newblock {\em Numer. Funct. Anal. Optim.}, 11(1-2):1--10, 1990.

\bibitem{defant2023asymptotic}
A.~Defant, D.~Galicer, M.~Mansilla, M.~Masty{\l}o, and S.~Muro.
\newblock Asymptotic insights for projection, {G}ordon-{L}ewis, and {S}idon constants in {B}oolean cube function spaces.
\newblock {\em Int. Math. Res. Not. IMRN}, (15):11239--11270, 2024.

\bibitem{defant2024projection}
A.~Defant, D.~Galicer, M.~Mansilla, M.~Masty{\l}o, and S.~Muro.
\newblock Projection constants for spaces of {D}irichlet polynomials.
\newblock {\em Math. Ann.}, 390(2):1885--1917, 2024.

\bibitem{defant2023integral}
A.~Defant, D.~Galicer, M.~Mansilla, M.~Masty\l{o}, and S.~Muro.
\newblock The projection constant for the trace class.
\newblock {\em Anal. PDE}, 18(2):527--548, 2025.

\bibitem{defant2009volume}
A.~Defant and C.~Prengel.
\newblock Volume estimates in spaces of homogeneous polynomials.
\newblock {\em Math. Z.}, 261(4):909--932, 2009.

\bibitem{deregowska2022value}
B.~Der\c{e}gowska, M.~Fickus, S.~Foucart, and B.~Lewandowska.
\newblock On the value of the fifth maximal projection constant.
\newblock {\em J. Funct. Anal.}, 283(10):Paper No. 109634, 17, 2022.

\bibitem{derȩgowska2023simple}
B.~Der\c{e}gowska and B.~Lewandowska.
\newblock A simple proof of the {G}r{\"u}nbaum conjecture.
\newblock {\em J. Funct. Anal.}, 285(2):Paper No. 109950, 8, 2023.

\bibitem{faraut2008analysis}
J.~Faraut.
\newblock {\em Analysis on {L}ie groups}, volume 110 of {\em Cambridge Studies in Advanced Mathematics}.
\newblock Cambridge University Press, Cambridge, 2008.
\newblock An introduction.

\bibitem{foucart2018determining}
S.~Foucart and J.~B. Lasserre.
\newblock Determining projection constants of univariate polynomial spaces.
\newblock {\em J. Approx. Theory}, 235:74--91, 2018.

\bibitem{foucart2017maximal}
S.~Foucart and L.~Skrzypek.
\newblock On maximal relative projection constants.
\newblock {\em J. Math. Anal. Appl.}, 447(1):309--328, 2017.

\bibitem{gronwall1914degree}
T.~H. Gronwall.
\newblock On the degree of convergence of {L}aplace's series.
\newblock {\em Trans. Amer. Math. Soc.}, 15(1):1--30, 1914.

\bibitem{grunbaum}
B.~Gr{\"u}nbaum.
\newblock Projection constants.
\newblock {\em Trans. Am. Math. Soc.}, 95:451--465, 1960.

\bibitem{HR2022}
P.~H{\'a}jek and T.~Russo.
\newblock Projecting {L}ipschitz functions onto spaces of polynomials.
\newblock {\em Mediterr. J. Math.}, 19(4):1--22, 2022.

\bibitem{hokamp2023spaces}
S.~A. Hokamp.
\newblock Spaces of continuous and measurable functions invariant under a group action.
\newblock {\em Methods Funct. Anal. Topology}, 29(3-4):94--100, 2023.

\bibitem{krieg2026sampling}
D.~Krieg, K.~Pozharska, M.~Ullrich, and T.~Ullrich.
\newblock Sampling projections in the uniform norm.
\newblock {\em Journal of Mathematical Analysis and Applications}, 553(2):129873, 2026.

\bibitem{lewickimastylo}
G.~Lewicki and M.~Masty{\l}o.
\newblock Asymptotic behavior of factorization and projection constants.
\newblock {\em J. Geom. Anal.}, 27(1):335--365, 2017.

\bibitem{lewicki2021chalmers}
G.~Lewicki and M.~Prophet.
\newblock The {C}halmers-{M}etcalf operator and minimal extensions.
\newblock {\em J. Funct. Anal.}, 280(1):Paper No. 108800, 30, 2021.

\bibitem{lewicki2022codimension}
G.~Lewicki and M.~Prophet.
\newblock Codimension 1 minimal projections onto the lines in {$L^p$}.
\newblock {\em J. Approx. Theory}, 283:Paper No. 105812, 16, 2022.

\bibitem{naor2021extension}
A.~Naor.
\newblock {\em Extension, separation and isomorphic reverse isoperimetry}, volume~11 of {\em Memoirs of the European Mathematical Society}.
\newblock European Mathematical Society (EMS), Berlin, 2024.

\bibitem{natanson1961constructive}
I.~P. Natanson.
\newblock {\em Constructive theory of functions}, volume~1.
\newblock US Atomic Energy Commission, Office of Technical Information Extension, 1961.

\bibitem{pisier1999volume}
G.~Pisier.
\newblock {\em The volume of convex bodies and {B}anach space geometry}, volume~94 of {\em Cambridge Tracts in Mathematics}.
\newblock Cambridge University Press, Cambridge, 1989.

\bibitem{ragozin1971uniform}
D.~L. Ragozin.
\newblock Uniform convergence of spherical harmonic expansions.
\newblock {\em Math. Ann.}, 195:87--94, 1972.

\bibitem{rau1929lebesgueschen}
H.~Rau.
\newblock {\"U}ber die {L}ebesgueschen {K}onstanten der {R}eihenentwicklungen nach {J}acobischen {P}olynomen.
\newblock {\em Walter de Gruyter, New York}, 1929.

\bibitem{rudin1962projections}
W.~Rudin.
\newblock Projections on invariant subspaces.
\newblock {\em Proc. Am. Math. Soc.}, 13:429--432, 1962.

\bibitem{rudin1985}
W.~Rudin.
\newblock The {Ryll}-{Wojtaszczyk} polynomials.
\newblock {\em Ann. Pol. Math.}, 46:291--294, 1985.

\bibitem{rudin1980}
W.~Rudin.
\newblock {\em Function theory in the unit ball of {{\({\mathbb{C}}^ n\)}}}.
\newblock Class. Math. Berlin: Springer, reprint of the 1980 original edition, 2008.

\bibitem{rutovitz}
D.~Rutovitz.
\newblock Some parameters associated with finite-dimensional {Banach} spaces.
\newblock {\em J. Lond. Math. Soc.}, 40:241--255, 1965.

\bibitem{ryll1983homogeneous}
J.~Ryll and P.~Wojtaszczyk.
\newblock On homogeneous polynomials on a complex ball.
\newblock {\em Trans. Am. Math. Soc.}, 276:107--116, 1983.

\bibitem{szeg1933Asymptotische}
G.~Szeg{\"o}.
\newblock {\em Asymptotische {Entwicklungen} der {Jacobischen} {Polynome}}.
\newblock Schr. {K{\"o}nigsberg}. gel. {Ges}. {Jg}. 10, {H}. 3. {Halle} a. {S}.: {Max} {Niemeyer}. 78 {S}., 15, 1933.

\bibitem{szeg1939orthogonal}
G.~Szeg\"o.
\newblock {\em Orthogonal polynomials}, volume~23.
\newblock American Mathematical Soc., 1939.

\bibitem{wojtaszczyk1996banach}
P.~Wojtaszczyk.
\newblock {\em Banach spaces for analysts}, volume~25 of {\em Cambridge Studies in Advanced Mathematics}.
\newblock Cambridge University Press, Cambridge, 1991.

\bibitem{zygmund2002trigonometric}
A.~Zygmund.
\newblock {\em Trigonometric series. {V}ol. {I}, {II}}.
\newblock Cambridge Mathematical Library. Cambridge University Press, Cambridge, third edition, 2002.
\newblock With a foreword by Robert A. Fefferman.

\end{thebibliography}
\end{document}